\declaretheorem[name=Lemma]{lemma}
\begin{document}
\setlength{\mathindent}{0mm}
\newlength{\symaeqmargin}
\setlength{\symaeqmargin}{2cm}
\newlength{\symaeqwidth}
\setlength{\symaeqwidth}{\textwidth - \symaeqmargin}
\allowdisplaybreaks

\newcommand{\mb}[1]{\mathbf{#1}}
\newcommand{\bs}[1]{\boldsymbol{#1}}
\newcommand{\opnl}[1]{\operatorname{#1}\nolimits}
\newcommand{\opl}[1]{\operatorname*{#1}}
\setlength\nulldelimiterspace{0pt}
\title{Exploring an Alternative Line-Search Method for Lagrange-Newton
Optimization}

\author{Ralf M\"oller\\Computer Engineering, Faculty of
Technology\\Bielefeld University\\\url{www.ti.uni-bielefeld.de}}

\date{version of \today}

\maketitle

\begin{abstract}

\noindent In the Lagrange-Newton method, where Newton's method is
applied to a Lagrangian function that includes equality constraints, all
stationary points are saddle points. It is therefore not possible to use
a line-search method based on the value of the objective function;
instead, the line search can operate on merit functions. In this report,
we explore an alternative line-search method which is applicable to this
case; it particulary addresses the damping of the step length in tight
valleys. We propose a line-search criterion based on the divergence of
the field of Newton step vectors. The visualization of the criterion for
two-dimensional test functions reveals a network of ravines with flat
bottom at the zero points of the criterion. The ravines are typically
connected to stationary points. To traverse this ravine network in order
to approach a stationary point, a zigzag strategy is devised. Numerical
experiments demonstrate that the novel line-search strategy succeeds
from most starting points in all test functions, but only exhibits the
desired damping of the step length in some situations. At the present
stage it is therefore difficult to appraise the utility of this
contribution.

\end{abstract}

\newpage

\tableofcontents

\newpage
\renewcommand{\textfraction}{0.1}
\renewcommand{\topfraction}{1.0}

\section{Introduction}\label{sec_intro}

%
Objective functions with equality constraints can be optimized using
Newton's method. Shortening (``damping'') Newton steps by using an
additional line-search method can improve the convergence. However, when
applying Newton's method to a Lagrangian function (which combines the
objective function with the constraints using Lagrange multipliers), all
stationary points are saddle points. While the plain Newton method can
approach such a saddle point, a line-search based on the value of an
objective function is not possible in this case. Instead, merit
functions need to be used; these combine objective function and
constraint in a different way compared to the Lagrangian.

In this report, an alternative line-search method for this case is
explored. It uses the divergence of the field of Newton steps as
criterion. Experiments with two-dimensional test functions reveal that
the zero points of the criterion form a network of ``ravines'' which
include all stationary points (with more dimensions, we presumably get
``ravine manifolds''). As these ravines have a flat bottom, a special
line-search strategy is required to traverse the ravines. The zigzag
strategy suggested in this work first descends into the ravine and then
alternatingly follows the Newton vector up to a given height at the
ravine's wall (zig) and descends back to the ravine bottom (zag). Since
at least in some test functions the ravine runs along narrow valleys in
the objective function, the novel method should enable line search in
these valleys (but may possibly not address other situations where
damping of the step length is required).

Section \ref{sec_notation} provides a list of mathematical symbols used
for large parts of this report (other parts define their own notation).
In section \ref{sec_newton_ls}, we start with a problem analysis from
the perspective of line-search in the plain Newton method. The
divergence-based criterion is derived in section \ref{sec_div_crit}.
Section \ref{sec_line_search} introduces the line-search strategy
required to traverse the ravine network produced from the zero points of
the criterion. The mathematical background of the Lagrange-Newton
method, i.e. the application of Newton's method to equality-constrained
problems, is recapitulated in section \ref{sec_lagrange_newton}. In
addition, we provide a proof that, in this case, all stationary points
are saddle points. We explore the novel method in numerical experiments
on several two-dimensional test functions (section \ref{sec_exp_twod})
and on one higher-dimensional test function (section
\ref{sec_exp_highd}). The results are discussed in section
\ref{sec_discussion} and conclusions are presented in section
\ref{sec_conclusions}. The software accompanying this report is referred
to in section \ref{sec_software}.

\section{Notation}\label{sec_notation}

%
\textbf{Dimensions:}
\begin{description}
\setlength{\itemsep}{0pt}
\item[] $n$: dimensionality of optimization problem, $1 \leq n$
\end{description}
\textbf{Matrices:}
\begin{description}
\setlength{\itemsep}{0pt}
\item[] $\mb{x}$: state vector, arbitrary, $n \times 1$
\item[] $f$: objective function, scalar, $1 \times 1$
\item[] $\mb{H}$: Hessian, symmetric, $n \times n$
\item[] $\bs{\gamma}$: gradient, arbitrary, $n \times 1$
\item[] $\bs{\nu}$: Newton step, arbitrary, $n \times 1$
\item[] $\mb{N}$: Jacobian of Newton step, square, $n \times n$
\item[] $\mb{C}$: shape matrix of quadratic function, symmetric, $n \times n$
\item[] $\mb{c}$: center of quadratic function, arbitrary, $n \times 1$
\item[] $\mb{s}$: small step, arbitrary, $n \times 1$
\item[] $\alpha$: damping factor for line search, scalar, $1 \times 1$
\item[] $\mb{v}$: generic vector field, arbitrary, $n \times 1$
\item[] $\tau$: divergence of Newton step, scalar, $1 \times 1$
\item[] $\check{\tau}$: divergence criterion, scalar, $1 \times 1$
\item[] $\tilde{\mb{q}}$: unnormalized pullback direction (approximated gradient), arbitrary, $n \times 1$
\item[] $\mb{q}$: normalized pullback direction (approximated gradient), arbitrary, $n \times 1$
\item[] $\tilde{\mb{p}}$: unnormalized pullback direction (singularity curves), arbitrary, $n \times 1$
\item[] $\mb{p}$: normalized pullback direction (singularity curves), arbitrary, $n \times 1$
\end{description}

\section{Newton's Method and Line Search}\label{sec_newton_ls}

%
Our goal is to find a criterion for line search in Newton's method which
is also applicable to approach saddle points, as required for the
Lagrange-Newton method. We start our problem analysis from the generic
Newton method. The Lagrange-Newton method (see section
\ref{sec_lagrange_newton}) can for now be considered as a special case
of the Newton method where the state vector is extended by the Lagrange
multipliers.

In Newton's method \citep[see e.g.][ch.~9]{nn_Chong08}, a quadratic
approximation of the given function $f\mleft( \mb{x} \mright)$ is
determined from a truncated Taylor series. For this approximation, a
``Newton step'' $\bs{\nu}\mleft( \mb{x} \mright)$ towards the stationary
point of the approximation is computed. If $f\mleft( \mb{x} \mright)$ is
not purely quadratic, this procedure needs to be repeated. The Newton
step is obtained from
\begin{align}
\label{eq_newton_nu}
\bs{\nu}\mleft( \mb{x} \mright)
&=
- \mleft( \mb{H}\mleft( \mb{x} \mright) \mright)^{-1} \bs{\gamma}\mleft(
\mb{x} \mright)
\end{align}
where the gradient of $f\mleft( \mb{x} \mright)$ (a column vector) is
given by
\begin{align}
\bs{\gamma}\mleft( \mb{x} \mright)
&=
\mleft( \frac{\partial}{\partial \mb{x}}\,f\mleft( \mb{x} \mright)
\mright)^T
\end{align}
and the Hessian of $f\mleft( \mb{x} \mright)$ is given by
\begin{align}
\mb{H}\mleft( \mb{x} \mright) & = \frac{\partial}{\partial \mb{x}}\,\bs{\gamma}\mleft( \mb{x} \mright) = \frac{\partial}{\partial \mb{x}}\,\mleft( \frac{\partial}{\partial
\mb{x}}\,f\mleft( \mb{x} \mright) \mright)^T.
\end{align}
The Newton method converges towards any stationary point, regardless of
whether it is a minimum, a maximum, or a saddle point. This can be
explained informally as follows: At a stationary point, the gradient is
zero, and in the vicinity, third- and higher-order terms are small in
comparison to the quadratic term, so we can approximate $f\mleft( \mb{x}
\mright)$ by a quadratic function
\begin{align}
\label{eq_f_quad_approx}
f\mleft( \mb{x} \mright)
&\approx
\frac{1}{2}\, \mleft( \mb{x} - \mb{c} \mright)^T \mb{C} \mleft( \mb{x} -
\mb{c} \mright)
\end{align}
where $\mb{C}$ is a symmetric matrix describing the shape of the
quadratic function and vector $\mb{c}$ is the center of the quadratic
function. Note that the eigenvalues of $\mb{C}$ determine whether we
have a saddle point (mixed signs), a minimum (all negative), or a
maximum (all positive). With $\bs{\gamma}\mleft( \mb{x} \mright) =
\mb{C} \mleft( \mb{x} - \mb{c} \mright) $ and $\mb{H}\mleft( \mb{x}
\mright) = \mb{C}$ we get the Newton step
\begin{align}
\label{eq_nu_quad_approx}
\bs{\nu}\mleft( \mb{x} \mright)
&=
- \mleft( \mb{x} - \mb{c} \mright)
\end{align}
which is independent of $\mb{C}$ and therefore independent of the type
of stationary point. If we deviate from the center $\mb{c}$ by a step
$\mb{s}$, and thus have $\mb{x} = \mb{c} + \mb{s}$, we obtain
\begin{align}
\bs{\nu}\mleft( \mb{c} + \mb{s} \mright) & = - \mleft( \mleft[ \mb{c} + \mb{s} \mright] - \mb{c} \mright) = - \mb{s}
\end{align}
so the deviation $\mb{s}$ is always eliminated by the Newton step,
leading to $\mb{x} = \mb{c}$.\footnote{Note that we work on the local
quadratic approximation of $f\mleft( \mb{x} \mright)$, so we need to
assume that the deviation $\mb{s}$ is small in order to make a statement
on the original $f\mleft( \mb{x} \mright)$.}

Note that while the convergence of Newton's method to all types of
stationary points is a problem for unconstrained optimization where only
either minima or maxima are of interest, the convergence to saddle
points is a prerequisite for optimization under equality constraints in
the Lagrange-Newton method --- a gradient descent in the Lagrangian
\eqref{eq_lagrangian} would fail.

Thus, at least in the vicinity of the stationary point, the Newton step
leads us towards the stationary point. However, Newton's method has two
problems which can occur at larger distances from the stationary point.
First, the quadratic approximation underlying Newton's method may be
``wrong'' (in the sense that the quadratic approximation at the current
location is at odds with the quadratic approximation at the stationary
point) and so the Newton step may actually increase the distance to the
stationary point. This problem is not addressed in this work.

Second, even if the quadratic terms are appropriate, higher-order
components in the function $f\mleft( \mb{x} \mright)$ get more influence
at larger distance from the stationary point. The Newton step --- which
is perfect for a quadratic function --- may therefore be too
long,\footnote{Apparently it is assumed that the Newton step may be {\em
too long} and needs to be ``damped'', i.e. shortened, but may not be
{\em too short}: e.g. \citet[eq.~13.3.2]{nn_Jarre19} modify the damping
factor in the range $[0,1]$ in SQP methods;
\citet[algorithm~3.1]{nn_Nocedal06} suggest an initial factor of $1$ for
backtracking in Newton methods (backtracking reduces the factor
iteratively). It is not clear to me whether there is a mathematical or
an empirical argument underlying this choice.} Such an ``overstepping''
may also occur close to singularities of the Hessian (as the values of
the inverse Hessian used in Newton's method get large) and in narrow,
winding valleys of the objective function.\footnote{Some websites also
list flat regions of the objective function as problematic, but I don't
see a direct relationship between the slope and the Hessian.} If
overstepping is not prevented, additional Newton steps may be required,
impairing the performance; a one-dimensional example is given by
\citet[sec.~4.3.1]{nn_Alt11}. In some cases, the method may even diverge
after an such an overstepping.

This is the reason why {\em line-search methods} are used to find a
reduced step $\alpha \bs{\nu}\mleft( \mb{x} \mright) $ with $\alpha \in
(0,1]$. For a minimum search, this is relatively straight-forward: The
most trivial method would be to search through discrete values of
$\alpha$ and look for the largest reduction of $f\mleft( \mb{x} + \alpha
\bs{\nu}\mleft( \mb{x} \mright) \mright)$. However, this approach fails
for the Lagrange-Newton method where we need to approach a saddle point.
At a saddle point, it is not known whether a decrease or an increase in
the function value indicates progress towards the stationary point.

For the Lagrange-Newton case where a function is minimized under an
equality constraint, {\em merit functions} may be used for the line
search. According to \citet[sec.~15.2]{nn_Andrei22}, these merit
functions are the sum of two terms: one related to the objective
function, the other expressing the fulfillment of the constraints. The
merit function needs to balance the two parts, introducing weighting
parameters which may be difficult to chose. It is not clear to me from
the study of optimization textbooks whether merit functions like
augmented Lagrangians \citep[][sec.~15.4]{nn_Nocedal06} are routinely
used for line search in Lagrange-Newton methods (or in the related SQP
methods); it is however at least mentioned by
\citet[sec.~13.3.1]{nn_Jarre19} and \citet[sec.~11.4.4.1]{nn_Corriou21}.
Moreover, there may be other cases besides the Lagrange-Newton method
where we want to approach a saddle point, but where there is no clear
distinction between two criteria that need to be satisfied
simultaneously.\footnote{An example is an ``information criterion'' for
principal component analysis which we used to derive online neural
networks \citep[][]{own_Moeller04a}.} This may prevent the
straight-forward design of a merit function.

In this work we therefore explore whether it is possible to find a
criterion which can be used for the line search in Newton's method, even
if we approach a saddle point. Since the function values $f\mleft(
\mb{x} \mright)$ are not indicative of progress in this case, we need to
resort to derivatives of the objective function. The method developed
below may actually have a more restricted applicability compared to a
classical line-search method, as it probably only works in narrow,
winding valleys of the objective function, but possibly not in other
situations where the Newton vector gets too long, e.g. close to the
singularity of the Hessian.\footnote{However, as will be shown below,
there often is a damping in one of the first steps which is cut short at
the nearest ravine.}

\section{Divergence-based Criterion}\label{sec_div_crit}

%
The criterion explored here is based on the ``divergence'', a measure
for detecting sources and sinks in vector fields known from physics and
electrical engineering. If $\mb{v}\mleft( \mb{x} \mright)$ is a vector
field over a spatial coordinate $\mb{x}$, the divergence is defined as
\begin{align}
\opnl{div} \mleft\{ \mb{v}\mleft( \mb{x} \mright) \mright\}
&=
\sum\limits_{i = 1}^{n}\frac{\partial}{\partial x_{i}}\,\mleft(
\mb{v}\mleft( \mb{x} \mright) \mright)_{i}.
\end{align}
We see that the divergence adds up scalar derivatives of vector field
component $i$ with respect to location component $i$.\footnote{Due to
this close relation to the components of the vector field, I was not
sure whether the divergence is actually invariant to rotations of the
vector field. However, invariance to rotations is given; a proof is
provided in section~\ref{sec_inv_div_rot}.}

The divergence can alternatively be defined via the trace\footnote{ This
form is useful for derivations and proofs, but should of course not be
used in a software implementation due to the computational overhead for
computing the unused off-diagonal elements.} as
\begin{align}
\opnl{div} \mleft\{ \mb{v}\mleft( \mb{x} \mright) \mright\}
&=
\opnl{tr} \mleft\{ \frac{\partial}{\partial \mb{x}}\,\mb{v}\mleft(
\mb{x} \mright) \mright\}.
\end{align}
The divergence criterion we suggest for the Newton method is
\begin{align}
\label{eq_divcrit1}
\tau\mleft( \mb{x} \mright) & = - {\,\frac{1}{n}\,} \opnl{div} \mleft\{ \bs{\nu}\mleft( \mb{x} \mright)
\mright\} = - {\,\frac{1}{n}\,} \opnl{tr} \mleft\{ \frac{\partial}{\partial
\mb{x}}\,\bs{\nu}\mleft( \mb{x} \mright) \mright\}\\
\label{eq_divcrit2}
& = {\,\frac{1}{n}\,} \opnl{div} \mleft\{ \mleft( \mb{H}\mleft( \mb{x}
\mright) \mright)^{-1} \bs{\gamma}\mleft( \mb{x} \mright) \mright\}.
\end{align}
The normalization of the divergence by $n$ makes the value independent
of the number of dimensions and guarantees that the value of
$\tau\mleft( \mb{x} \mright)$ is exactly $1$ at stationary points. The
minus sign is required as the value at stationary points would be $-1$
without. For the quadratic approximation \eqref{eq_f_quad_approx} we
insert the Newton step from \eqref{eq_nu_quad_approx} and obtain
\begin{align}
\tau\mleft( \mb{x} \mright) & = - {\,\frac{1}{n}\,} \opnl{tr} \mleft\{ \frac{\partial}{\partial
\mb{x}}\,\bs{\nu}\mleft( \mb{x} \mright) \mright\} = - {\,\frac{1}{n}\,} \opnl{tr} \mleft\{ \frac{\partial}{\partial
\mb{x}}\,\mleft( - \mleft[ \mb{x} - \mb{c} \mright] \mright) \mright\} = {\,\frac{1}{n}\,} \opnl{tr} \mleft\{ \frac{\partial}{\partial
\mb{x}}\,\mb{x} \mright\} = {\,\frac{1}{n}\,} \opnl{tr} \mleft\{ \mb{I}_{n} \mright\} = 1
\end{align}
which proves the desired property.

The divergence is applied to the Newton step rather than the gradient.
Due to the multiplication of the gradient by the inverse Hessian, the
Newton steps are directed ``inwards'', i.e. all vectors point
approximately towards a nearby stationary point, regardless of its type
(i.e. minimum, maximum, or saddle). Put differently, sinks (minima),
sources (maxima), and saddle points in the gradient field are turned
into sinks in the field of Newton steps. In addition, the shape of the
resulting sink-like vector field is distorted such that the vectors
point almost in a straight line towards the stationary point (which
would not be the case for the gradient, e.g. in elongated depressions,
even in minima where the vectors are already directed inwards).

At any stationary point, the divergence criterion is $1$, as shown
above. My original idea behind using the divergence was that this
criterion would decrease with increasing distance from the stationary
point. However, this is not the case as was revealed by studying the
criterion in two-dimensional test functions (see below). Instead, when I
plotted locations where the divergence is $1$, I observed curves
typically running through stationary points.\footnote{However, as will
be shown below, there can be isolated stationary points not lying on
such a curve. Moreover, ``spurious'' curves may appear which do not
include a stationary point.} Particularly in the Rosenbrock function
which exhibits a narrow, winding valley (where also the stationary point
is located), the curves run along the bottom of this valley. The curves
therefore offer themselves as the possible basis of a different class of
line-search methods which follow the Newton step as long at is stays
close to the curves.

For functions with higher dimension $n$, I assume that we don't have
lines with unit divergence criterion but manifolds of dimension $n - 1$.
It is not sufficient to have a {\em narrow} valley in the objective
function, but the valley also needs to be {\em winding} for manifolds
with unit divergence to occur which intersects with its
bottom.\footnote{Actually it seems that even straight valleys can have
such a manifold if the slopes of the valley are bent, e.g. at the
entrance to the valley where the slopes are forming a ``funnel''.
Moreover, in one of the test functions studied below, there is
non-winding valley without bending walls where the criterion is zero at
the bottom.} The direction in which the valley is bent probably
determines the orientation of the manifold relative to the valley
bottom.

The divergence criterion contains derivatives up to the third order. I
assume that first-order derivatives alone would not be sufficient to
define a criterion since the gradient is affected by the type of
stationary point (minimum, maximum, saddle). Applying the divergence to
the gradient would just require second-order derivatives, but only
applying the divergence to the Newton step turns all types of stationary
points into attractors --- at the price of introducing third-order
derivatives into the divergence criterion.

To further process the expression of the divergence criterion
\eqref{eq_divcrit2}, we introduce and transform the Jacobian
$\mb{N}\mleft( \mb{x} \mright)$ of the Newton step $\bs{\nu}\mleft(
\mb{x} \mright)$
\begin{align}
&\phantom{{}={}} \mb{N}\mleft( \mb{x} \mright) \nonumber \\
& = \frac{\partial}{\partial \mb{x}}\,\bs{\nu}\mleft( \mb{x} \mright)\\
& = \frac{\partial}{\partial \mb{x}}\,\mleft( - \mleft[ \mb{H}\mleft( \mb{x}
\mright) \mright]^{-1} \bs{\gamma}\mleft( \mb{x} \mright) \mright)\\
\shortintertext{with \eqref{eq_ddx_Axbx} and omitting argument $\mb{x}$ we get}
& = - \mleft( \mleft( \mleft( \frac{\partial}{\partial x_{j}}\,\mb{H}^{-1}
\mright)_{i,*} \bs{\gamma} \mright)^{n\times n}_{i,j} + \mb{H}^{-1}
\mleft[ \frac{\partial}{\partial \mb{x}}\,\bs{\gamma} \mright] \mright)\\
& = - \mleft( \mleft( \mleft( \frac{\partial}{\partial x_{j}}\,\mb{H}^{-1}
\mright)_{i,*} \bs{\gamma} \mright)^{n\times n}_{i,j} + \mb{H}^{-1}
\mb{H} \mright)\\
& = \mleft( - \mb{I}_{n} \mright) - \mleft( \mleft( \frac{\partial}{\partial
x_{j}}\,\mb{H}^{-1} \mright)_{i,*} \bs{\gamma} \mright)^{n\times
n}_{i,j}\\
\shortintertext{we insert the derivative of matrix inverse given in \eqref{eq_inv_deriv}}
& = \mleft( - \mb{I}_{n} \mright) + \mleft( \mleft( \mb{H}^{-1} \mleft[
\frac{\partial}{\partial x_{j}}\,\mb{H} \mright] \mb{H}^{-1}
\mright)_{i,*} \bs{\gamma} \mright)^{n\times n}_{i,j}\\
\shortintertext{use \eqref{eq_ABix}}
& = \mleft( - \mb{I}_{n} \mright) + \mleft( \mleft( \mb{H}^{-1}
\mright)_{i,*} \mleft[ \frac{\partial}{\partial x_{j}}\,\mb{H} \mright]
\mb{H}^{-1} \bs{\gamma} \mright)^{n\times n}_{i,j}\\
\shortintertext{and insert Newton descent vector \eqref{eq_newton_nu}:}
&\label{eq_N_1}
 = \mleft( - \mb{I}_{n} \mright) - \mleft( \mleft( \mb{H}^{-1}
\mright)_{i,*} \mleft[ \frac{\partial}{\partial x_{j}}\,\mb{H} \mright]
\bs{\nu} \mright)^{n\times n}_{i,j}.\\
\shortintertext{An alternative form of this expression can be obtained from \eqref{eq_mat_prod}:}
&\label{eq_N_2}
 = \mleft( - \mb{I}_{n} \mright) - \mb{H}^{-1} \mleft( \mleft[
\frac{\partial}{\partial x_{j}}\,\mb{H} \mright] \bs{\nu}
\mright)^{n\times n}_{*,j}.
\end{align}
We observe that the Jacobian of the Newton step is not obviously
symmetric. This probably results from the fact that it isn't the second
derivative of a potential function, since the inverse Hessian modifies
the gradient.

Since for the divergence we only need the main diagonal of $\mb{N}$, we
take \eqref{eq_N_1} and get the diagonal element
\begin{align}
\mleft( \mb{N} \mright)_{i,i}
&=
\mleft( - 1 \mright) - \mleft( \mb{H}^{-1} \mright)_{i,*} \mleft(
\frac{\partial}{\partial x_{i}}\,\mb{H} \mright) \bs{\nu}.
\end{align}
We can insert this into \eqref{eq_divcrit1} and obtain
\begin{align}
&\phantom{{}={}} \tau\mleft( \mb{x} \mright) \nonumber \\
& = - {\,\frac{1}{n}\,} \opnl{tr} \mleft\{ \frac{\partial}{\partial
\mb{x}}\,\bs{\nu} \mright\}\\
& = - {\,\frac{1}{n}\,} \sum\limits_{i = 1}^{n}\mleft( \mb{N} \mright)_{i,i}\\
& = {\,\frac{1}{n}\,} \sum\limits_{i = 1}^{n}\mleft( 1 + \mleft( \mb{H}^{-1}
\mright)_{i,*} \mleft[ \frac{\partial}{\partial x_{i}}\,\mb{H} \mright]
\bs{\nu} \mright)\\
&\label{eq_tau}
 = 1 + {\,\frac{1}{n}\,} \sum\limits_{i = 1}^{n}\mleft( \mb{H}^{-1}
\mright)_{i,*} \mleft( \frac{\partial}{\partial x_{i}}\,\mb{H} \mright)
\bs{\nu}.
\end{align}
We see that for a purely quadratic function we have $\tau\mleft( \mb{x}
\mright) = 1$ regardless of the location $\mb{x}$ since the third-order
derivative in the second factor disappears. This is also an alternative
proof that the divergence is $1$ at stationary points where the function
is approximately quadratic.

A discussion of the effort for the computation of the third-order
derivatives of the Hessian $\mb{H}$ is provided in
section~\ref{sec_effort_third_order}.

For the line-search mechanism, we don't apply the divergence criterion
$\tau\mleft( \mb{x} \mright)$ directly, but define a criterion
\begin{align}
\label{eq_tauv}
\check{\tau}\mleft( \mb{x} \mright)
&=
\mleft( \tau\mleft( \mb{x} \mright) - 1 \mright)^2
\end{align}
which is zero at points where $\tau\mleft( \mb{x} \mright) = 1$ and
increases at larger distance from these points, forming a valley (with
level bottom). We see that the summand $1$ disappears from
\eqref{eq_tau} when inserted into \eqref{eq_tauv}.

\section{Line-Search Method}\label{sec_line_search}

%
The criterion \eqref{eq_tauv} alone is not sufficient for a line search,
and the line-search procedure in this criterion differs from the
classical one. This will be explored in the following.

\subsection{Zigzag Strategy}\label{sec_zigzag}

%
The valley\footnote{For simplicity, I will stick to the term ``valley''
(relating to the criterion), even though the term only applies to
two-dimensional functions. In higher dimensions, we probably have
manifolds where the criterion is zero, see above. Moreover, I will
switch to the term ``ravine'' for a valley in the criterion
$\check{\tau}$ later in the report.} in the criterion $\check{\tau}$
from \eqref{eq_tauv} has a {\em level} bottom. This is a crucial
difference to the classic line search which follows a valley (in the
objective function or merit function) with an {\em inclined} bottom. In
the classic case, the value of the function decreases along the line
until the value rises again when the line moves up the slope; therefore,
only this type of step is repeatedly applied.

In contrast, in the divergence criterion, we need an explicit ``zigzag''
strategy to follow a valley. The basic outline of the novel line-search
strategy is:

(1) \textbf{``down'' phase}: follow the Newton step and stop at the
bottom of a valley in criterion \eqref{eq_tauv} (where $\check{\tau}$ is
below a given first threshold, called the ``entry threshold''),

(2) \textbf{``zig'' phase}: follow the Newton step until criterion
\eqref{eq_tauv} has reached a second, higher threshold (called the
``escape threshold''), ending somewhere up the slopes of the valley, and

(3) \textbf{``zag'' phase}: return to the bottom of the valley be
sliding down the slope until the criterion again falls below the entry
threshold.

Phases (2) and (3) are repeated until some termination criterion is
fulfilled. Note that only phase (1) and (2) follow the Newton step.
Phase (1) may need to be repeated until a valley is found; this
corresponds to a Newton method without line search (but the last step is
cut short). A more detailed description of the implementation of the
zigzag strategy is given in section~\ref{sec_twod_newton_ls_impl}.

\subsection{Pullback Directions}\label{sec_pullback}

%
To define a direction for the ``zag'' phase, we would need the gradient
of $\check{\tau}$ from \eqref{eq_tauv}. If we are just interested in the
gradient line (without direction and ignoring the length of the
gradient), this can be obtained by computing the gradient of $\tau$ from
\eqref{eq_tau}. However, doing this would produce equations with
fourth-order derivatives which might entail a prohibitively large
computational effort. We therefore explore two approaches to compute the
``pullback'' directions for the ``zag'' phase: use an approximation of
the gradient of $\tau$, or use the gradient of the determinant of the
Hessian. The latter is motivated by the observation (see below) that
often the valleys of $\check{\tau}$ and the singularity curves
$\opnl{det} \mleft\{ \mb{H} \mright\} = 0$ are running alongside each
other.

Interestingly, as is shown in appendix \ref{sec_pullback_equiv}, these
two approaches produce the same pullback directions (due to Schwarz's
theorem), in this way establishing a relationship between the
approximation of the gradient of the criterion and the more indirectly
motivated use of the singularity of the Hessian. Different
approximations of the gradient of $\tau$ could be explored in future
work.

\subsubsection{Approximated Gradient}

%
After the zig step, we are at the walls of the ravine, in some distance
of the zero curve of $\check{\tau}$ from \eqref{eq_tauv}. A search in
the direction of the negative gradient of $\check{\tau}$ would lead us
back to the ravine bottom. By applying the chain rule
\eqref{eq_chainrule1} we obtain
\begin{align}
&\phantom{{}={}} - \frac{\partial}{\partial \mb{x}}\,\check{\tau}\mleft( \mb{x} \mright) \nonumber \\
& = - \mleft( \frac{\partial}{\partial \tau}\,\mleft[ \tau - 1 \mright]^2
\mright) \mleft( \frac{\partial}{\partial \mb{x}}\,\tau\mleft( \mb{x}
\mright) \mright)\\
& = - 2 \mleft( \tau - 1 \mright) \mleft( \frac{\partial}{\partial
\mb{x}}\,\tau\mleft( \mb{x} \mright) \mright).
\end{align}
Since we are only interested in the line along which we can search and
not in the scaling or sign of the gradient, we only need to determine
the gradient of $\tau$ from \eqref{eq_tau} which can be done as follows:
\begin{align}
&\phantom{{}={}} \frac{\partial}{\partial \mb{x}}\,\tau \nonumber \\
& = \frac{\partial}{\partial \mb{x}}\,\mleft( {\,\frac{1}{n}\,}
\sum\limits_{i = 1}^{n}\mleft( \mb{H}^{-1} \mright)_{i,*} \mleft[
\frac{\partial}{\partial x_{i}}\,\mb{H} \mright] \bs{\nu} \mright)\\
\shortintertext{insert $\bs{\gamma}$}
& = - \frac{\partial}{\partial \mb{x}}\,\mleft( {\,\frac{1}{n}\,}
\sum\limits_{i = 1}^{n}\mleft( \mb{H}^{-1} \mright)_{i,*} \mleft[
\frac{\partial}{\partial x_{i}}\,\mb{H} \mright] \mb{H}^{-1} \bs{\gamma}
\mright)\\
\shortintertext{omit all derivatives above second order by assuming that only $\bs{\gamma}$ depends on $\mb{x}$, apply \eqref{eq_ddx_Abx}}
& \approx - {\,\frac{1}{n}\,} \sum\limits_{i = 1}^{n}\mleft( \mb{H}^{-1}
\mright)_{i,*} \mleft( \frac{\partial}{\partial x_{i}}\,\mb{H} \mright)
\mb{H}^{-1} \mleft( \frac{\partial}{\partial \mb{x}}\,\bs{\gamma}
\mright)\\
& = - {\,\frac{1}{n}\,} \sum\limits_{i = 1}^{n}\mleft( \mb{H}^{-1}
\mright)_{i,*} \mleft( \frac{\partial}{\partial x_{i}}\,\mb{H} \mright)
\mb{H}^{-1} \mb{H}\\
& = - {\,\frac{1}{n}\,} \sum\limits_{i = 1}^{n}\mleft( \mb{H}^{-1}
\mright)_{i,*} \mleft( \frac{\partial}{\partial x_{i}}\,\mb{H} \mright).
\end{align}
The length and sign of this approximated gradient is not relevant, so we
omit the normalization factor. We transpose the result to obtain the
gradient as a column vector (note that $\mb{H}$ is symmetric, and thus
also $\mb{H}^{-1}$ and the derivatives of $\mb{H}$). Finally, the result
is normalized to unit length:
\begin{align}
\label{eq_pt}
\tilde{\mb{p}}\mleft( \mb{x} \mright)
&=
\sum\limits_{i = 1}^{n}\mleft( \frac{\partial}{\partial x_{i}}\,\mb{H}
\mright) \mleft( \mb{H}^{-1} \mright)_{*,i}\\
\mb{p}\mleft( \mb{x} \mright)
&=
{\,\frac{\tilde{\mb{p}}\mleft( \mb{x} \mright)}{\mleft\|
\tilde{\mb{p}}\mleft( \mb{x} \mright) \mright\|}\,}.
\end{align}

\subsubsection{Singularity Curves}

%
The singularity is described by the zero-crossings of $\opnl{det}
\mleft\{ \mb{H} \mright\}$. To compute the gradient of $\opnl{det}
\mleft\{ \mb{H} \mright\}$, we can use Jacobi's formula\footnote{Taken
from \url{https://en.wikipedia.org/wiki/Jacobi\%27s_formula}.}, here
adapted for the problem at hand:
\begin{align}
\frac{\partial}{\partial x_{k}}\,\opnl{det} \mleft\{ \mb{H} \mright\}
&=
\opnl{det} \mleft\{ \mb{H} \mright\} \opnl{tr} \mleft\{ \mb{H}^{-1}
\mleft( \frac{\partial}{\partial x_{k}}\,\mb{H} \mright) \mright\}
\end{align}
We determine the gradient as a column vector:
\begin{align}
&\phantom{{}={}} \mleft( \frac{\partial}{\partial \mb{x}}\,\opnl{det} \mleft\{ \mb{H}
\mright\} \mright)^T \nonumber \\
& = \opnl{det} \mleft\{ \mb{H} \mright\} \mleft( \opnl{tr} \mleft\{
\mb{H}^{-1} \mleft( \frac{\partial}{\partial x_{k}}\,\mb{H} \mright)
\mright\} \mright)^{n\times 1}_{k,1}\\
& = \opnl{det} \mleft\{ \mb{H} \mright\} \mleft( \sum\limits_{i =
1}^{n}\mleft( \mb{H}^{-1} \mleft[ \frac{\partial}{\partial
x_{k}}\,\mb{H} \mright] \mright)_{i,i} \mright)^{n\times 1}_{k,1}\\
& = \opnl{det} \mleft\{ \mb{H} \mright\} \mleft( \sum\limits_{i =
1}^{n}\mleft( \mb{H}^{-1} \mright)_{i,*} \mleft(
\frac{\partial}{\partial x_{k}}\,\mb{H} \mright)_{*,i} \mright)^{n\times
1}_{k,1}.
\end{align}
We are only interested in the line of the gradient vector, not in its
magnitude. Moreover, even the sign of the determinant in the first
factor is not required for the optimization step in the ``zag'' phase.
Therefore, we omit the factor with the determinant, normalize to unit
length and obtain
\begin{align}
\label{eq_qt}
\tilde{\mb{q}}\mleft( \mb{x} \mright)
&=
\mleft( \sum\limits_{i = 1}^{n}\mleft( \mb{H}^{-1} \mright)_{i,*}
\mleft( \frac{\partial}{\partial x_{k}}\,\mb{H} \mright)_{*,i}
\mright)^{n\times 1}_{k,1}\\
\mb{q}\mleft( \mb{x} \mright)
&=
{\,\frac{\tilde{\mb{q}}\mleft( \mb{x} \mright)}{\mleft\|
\tilde{\mb{q}}\mleft( \mb{x} \mright) \mright\|}\,}.
\end{align}

\section{Lagrange-Newton Method}\label{sec_lagrange_newton}

%
The line-search method explored in this work was developed specifically
for use in the Lagrange-Newton method, i.e. the optimization of an
objective function under equality constraints. My interest in this
problem is mainly motivated by two applications of the Lagrange-Newton
method, one for principal component analysis \citep[][]{own_Moeller22},
the other for Smoothing-and-Mapping of maps \citep[][]{own_Moeller24}.
In the Lagrange-Newton framework, all stationary points are saddle
points (for which we provide a proof below). Therefore, we can't use the
value of the objective function (in this case the Lagrangian) for line
search, but would have to resort to merit functions (or to the
alternative method presented in this work). This section provides the
mathematical background.

\subsection{Notation}

%
\textbf{Dimensions:}
\begin{description}
\setlength{\itemsep}{0pt}
\item[] $n$: dimensionality of optimization problem, $3 \leq n$
\item[] $m$: number of equality constraints, $1 \leq m \leq n$
\end{description}
\textbf{Matrices:}
\begin{description}
\setlength{\itemsep}{0pt}
\item[] $\mb{x}$: state vector, arbitrary, $n \times 1$
\item[] $\bar{\mb{x}}$: stationary point, arbitrary, $n \times 1$
\item[] $\mb{z}$: extended state vector, arbitrary, $(n + m) \times 1$
\item[] $f\mleft( \mb{x} \mright)$: objective function, scalar, $1 \times 1$
\item[] $\mb{g}\mleft( \mb{x} \mright)$: vector of equality constraints, arbitrary, $m \times 1$
\item[] $g_{j}\mleft( \mb{x} \mright)$: single equality constraint, scalar, $1 \times 1$, vector element of $\mb{g}$
\item[] $\bs{\lambda}$: vector of Lagrange multipliers, arbitrary, $m \times 1$
\item[] $\lambda_{j}\mleft( \mb{x} \mright)$: single Lagrange multiplier, scalar, $1 \times 1$, vector element of $\bs{\lambda}$
\item[] $L\mleft( \mb{x},\bs{\lambda} \mright)$: Lagrangian, scalar, $1 \times 1$
\item[] $\mb{H}_b$: bordered Hessian, symmetric, $(n + m) \times (n + m)$
\item[] $\bar{\mb{H}}_b$: bordered Hessian at stationary point, symmetric, $(n + m) \times (n + m)$
\end{description}

\subsection{Lagrangian}\label{sec_lagrangian}

%
Given an objective function $f\mleft( \mb{x} \mright)$, equality
constraints $\mb{g}\mleft( \mb{x} \mright) = \mb{0}_{m}$, and Lagrange
multipliers $\bs{\lambda}$, the Lagrangian of the constrained
optimization problem is defined as
\begin{align}
\label{eq_lagrangian}
L\mleft( \mb{x},\bs{\lambda} \mright)
&=
f\mleft( \mb{x} \mright) + \bs{\lambda}^T \mb{g}\mleft( \mb{x} \mright).
\end{align}
We can then introduce a joint (or extended) state vector
\begin{align}
\mb{z}
&=
\begin{pmatrix}\mb{x}\\\bs{\lambda}\end{pmatrix}
\end{align}
and (in a trivial implementation) apply Newton's method to the
Lagrangian as objective function. We will show below that all stationary
points of the Lagrangian are saddle points. For this, we need to
introduce the special form of the Hessian for the constrained problem,
called the ``bordered Hessian''.

\subsection{Bordered Hessian}\label{sec_bordered_hessian}

%
In the following we explore whether the extended objective function
(over an extended variable vector $\mb{z}$ containing the original
variables and the Lagrange multipliers) has saddle points at all fixed
points. We start by deriving the general bordered Hessian.

The first-order derivatives (gradient) of the Lagrangian
\eqref{eq_lagrangian} are
\begin{align}
\label{eq_lagrangian_dx}
\frac{\partial}{\partial \mb{x}}\,L\mleft( \mb{x},\bs{\lambda} \mright)
&=
\frac{\partial}{\partial \mb{x}}\,f\mleft( \mb{x} \mright) +
\bs{\lambda}^T \mleft( \frac{\partial}{\partial \mb{x}}\,\mb{g}\mleft(
\mb{x} \mright) \mright)\\
\frac{\partial}{\partial \bs{\lambda}}\,L\mleft( \mb{x},\bs{\lambda}
\mright)
&=
\mleft( \mb{g}\mleft( \mb{x} \mright) \mright)^T.
\end{align}
To form the bordered Hessian
\begin{align}
\mb{H}_b
&=
\begin{pmatrix}\frac{\partial}{\partial \mb{x}}\,\mleft(
\frac{\partial}{\partial \mb{x}}\,L\mleft( \mb{x},\bs{\lambda} \mright)
\mright)^T & \frac{\partial}{\partial \bs{\lambda}}\,\mleft(
\frac{\partial}{\partial \mb{x}}\,L\mleft( \mb{x},\bs{\lambda} \mright)
\mright)^T\\\frac{\partial}{\partial \mb{x}}\,\mleft(
\frac{\partial}{\partial \bs{\lambda}}\,L\mleft( \mb{x},\bs{\lambda}
\mright) \mright)^T & \frac{\partial}{\partial \bs{\lambda}}\,\mleft(
\frac{\partial}{\partial \bs{\lambda}}\,L\mleft( \mb{x},\bs{\lambda}
\mright) \mright)^T\end{pmatrix}
\end{align}
we need to determine the second-order derivatives in the four blocks. We
start with the upper-left block:
\begin{align}
&\phantom{{}={}} \frac{\partial}{\partial \mb{x}}\,\mleft( \frac{\partial}{\partial
\mb{x}}\,L\mleft( \mb{x},\bs{\lambda} \mright) \mright)^T \nonumber \\
& = \frac{\partial}{\partial \mb{x}}\,\mleft( \frac{\partial}{\partial
\mb{x}}\,f\mleft( \mb{x} \mright) \mright)^T + \frac{\partial}{\partial
\mb{x}}\,\mleft( \bs{\lambda}^T \mleft[ \frac{\partial}{\partial
\mb{x}}\,\mb{g}\mleft( \mb{x} \mright) \mright] \mright)^T\\
& = \frac{\partial}{\partial \mb{x}}\,\mleft( \frac{\partial}{\partial
\mb{x}}\,f\mleft( \mb{x} \mright) \mright)^T + \frac{\partial}{\partial
\mb{x}}\,\mleft( \mleft[ \frac{\partial}{\partial \mb{x}}\,\mb{g}\mleft(
\mb{x} \mright) \mright]^T \bs{\lambda} \mright).
\end{align}
For the second term, we cannot find a closed-form matrix expression, but
we can compute the derivative for row $i$ according to
\eqref{eq_ddx_AxTb}
\begin{align}
&\phantom{{}={}} \mleft( \frac{\partial}{\partial \mb{x}}\,\mleft[ \mleft\{
\frac{\partial}{\partial \mb{x}}\,\mb{g}\mleft( \mb{x} \mright)
\mright\}^T \bs{\lambda} \mright] \mright)_{i,*} \nonumber \\
& = \bs{\lambda}^T \mleft( \frac{\partial}{\partial \mb{x}}\,\mleft[
\frac{\partial}{\partial x_{i}}\,\mb{g}\mleft( \mb{x} \mright) \mright]
\mright)
\end{align}
which in turn can be expressed in single-element form as
\begin{align}
\frac{\partial}{\partial \mb{x}}\,\mleft( \mleft[
\frac{\partial}{\partial \mb{x}}\,\mb{g}\mleft( \mb{x} \mright)
\mright]^T \bs{\lambda} \mright)
&=
\mleft( \sum\limits_{k = 1}^{m}\lambda_{k} \mleft[
\frac{\partial}{\partial x_{j}}\,\mleft\{ \frac{\partial}{\partial
x_{i}}\,g_{k}\mleft( \mb{x} \mright) \mright\} \mright]
\mright)^{n\times n}_{i,j},
\end{align}
thus we get
\begin{align}
\frac{\partial}{\partial \mb{x}}\,\mleft( \frac{\partial}{\partial
\mb{x}}\,L\mleft( \mb{x},\bs{\lambda} \mright) \mright)^T
&=
\frac{\partial}{\partial \mb{x}}\,\mleft( \frac{\partial}{\partial
\mb{x}}\,f\mleft( \mb{x} \mright) \mright)^T + \mleft( \sum\limits_{k =
1}^{m}\lambda_{k} \mleft[ \frac{\partial}{\partial x_{j}}\,\mleft\{
\frac{\partial}{\partial x_{i}}\,g_{k}\mleft( \mb{x} \mright) \mright\}
\mright] \mright)^{n\times n}_{i,j}.
\end{align}
The other three blocks of the bordered Hessian are easier to determine
(note that a Hessian is always symmetric):
\begin{align}
\frac{\partial}{\partial \bs{\lambda}}\,\mleft( \frac{\partial}{\partial
\mb{x}}\,L\mleft( \mb{x},\bs{\lambda} \mright) \mright)^T
&=
\mleft( \frac{\partial}{\partial \mb{x}}\,\mb{g}\mleft( \mb{x} \mright)
\mright)^T
\end{align}
\begin{align}
\frac{\partial}{\partial \mb{x}}\,\mleft( \frac{\partial}{\partial
\bs{\lambda}}\,L\mleft( \mb{x},\bs{\lambda} \mright) \mright)^T
&=
\frac{\partial}{\partial \mb{x}}\,\mb{g}\mleft( \mb{x} \mright)
\end{align}
\begin{align}
\frac{\partial}{\partial \bs{\lambda}}\,\mleft( \frac{\partial}{\partial
\bs{\lambda}}\,L\mleft( \mb{x},\bs{\lambda} \mright) \mright)^T
&=
\mb{0}_{m,m}.
\end{align}
The eigenvalues of the bordered Hessian at a given fixed point determine
the stability of this fixed point. I'm not aware of any statement on the
specific eigenvalues for the general case. However, in the following
section, we prove that the bordered Hessian is indefinite and thus has
both negative and positive eigenvalues. This demonstrates that all
stationary points are saddle points, demonstrating the necessity of
applying Newton's method rather than a gradient descent.

\subsection{Proof of Saddle Points}

%
We start the proof that all stationary points of the Lagrangian
\eqref{eq_lagrangian} are saddle points\footnote{A hint at the basic
idea of the proof is given at
\url{https://en.wikipedia.org/wiki/Hessian_matrix\#Bordered_Hessian}.}
from the following Lemma:
\begin{lemma}\label{lemma_zero_dir}
Given $n$: $1 \leq n$ and $\mb{H}$: symmetric, $n \times n$;
$\bar{\mb{H}}$: symmetric, $n \times n$; $\mb{x}$: arbitrary, $n \times
1$; $\bar{\mb{x}}$: arbitrary, $n \times 1$; $\mb{0}_{n}$: zero, $n
\times 1$; $f$: scalar, $1 \times 1$.

Let $\bar{\mb{H}} = \mb{H}\mleft( \bar{\mb{x}} \mright)$ be the Hessian
matrix of a function $f\mleft( \mb{x} \mright)$, evaluated at an extreme
point $\bar{\mb{x}}$ of $f$. Assume that $\bar{\mb{H}}$ is non-singular.
If an $\mb{x} \ne \mb{0}_{n}$ can be found for which $\mb{x}^T
\bar{\mb{H}} \mb{x} = 0$, then $\bar{\mb{H}}$ is indefinite and
$\bar{\mb{x}}$ is a saddle point of $f$.
\end{lemma}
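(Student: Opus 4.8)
The statement has two conclusions: (i) $\bar{\mb{H}}$ is indefinite, and (ii) $\bar{\mb{x}}$ is a saddle point. Since the paper defines a saddle point of $f$ via mixed-sign eigenvalues of the Hessian $\bar{\mb{H}}$ (see the discussion after \eqref{eq_f_quad_approx}), conclusion (ii) will follow immediately once (i) is established — so the real work is showing indefiniteness.

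Let me think about whether this is actually true and how to prove it. We have a symmetric nonsingular $\bar{\mb{H}}$ and a nonzero $\mb{x}$ with $\mb{x}^T\bar{\mb{H}}\mb{x}=0$. I want to conclude that $\bar{\mb{H}}$ has both a positive and a negative eigenvalue.

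The cleanest approach is by contradiction using the spectral theorem. Suppose $\bar{\mb{H}}$ is not indefinite. Since it's nonsingular, it has no zero eigenvalue, so all eigenvalues are strictly positive (positive definite) or all strictly negative (negative definite). In the positive definite case, $\mb{x}^T\bar{\mb{H}}\mb{x} > 0$ for all $\mb{x}\ne\mb{0}_n$ — contradicting $\mb{x}^T\bar{\mb{H}}\mb{x}=0$ for our nonzero $\mb{x}$. The negative definite case is symmetric: $\mb{x}^T\bar{\mb{H}}\mb{x} < 0$ for all nonzero $\mb{x}$, again a contradiction. Hence $\bar{\mb{H}}$ must have eigenvalues of both signs, i.e. it is indefinite.

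Alternatively, one could give a constructive argument: diagonalize $\bar{\mb{H}} = \mb{Q}\bs{\Lambda}\mb{Q}^T$ with $\mb{Q}$ orthogonal, set $\mb{y} = \mb{Q}^T\mb{x} \ne \mb{0}_n$, so that $\mb{x}^T\bar{\mb{H}}\mb{x} = \sum_i \lambda_i y_i^2 = 0$. Since $\mb{y}\ne\mb{0}_n$, at least one $y_i^2 > 0$; since all $\lambda_i\ne 0$ (nonsingularity) and the weighted sum of the strictly positive quantities $y_i^2$ vanishes, the $\lambda_i$ cannot all share the same sign — there must be at least one positive and one negative $\lambda_i$. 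This is essentially the same observation but exhibits the mixed signs directly.

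I'd write up the contradiction version as the main proof since it's shortest, perhaps noting the constructive variant. The non-singularity hypothesis is doing essential work: without it, $\bar{\mb{H}}$ could be positive \emph{semi}definite with $\mb{x}$ in its kernel, and then $\bar{\mb{x}}$ would be a (degenerate) minimum, not a saddle — so I would flag explicitly where that hypothesis enters. I don't anticipate a real obstacle here; the only subtlety is making sure the definition of "saddle point" being invoked is the eigenvalue-sign one from Section~\ref{sec_newton_ls}, and that the phrase "extreme point" in the hypothesis just means "stationary point" (gradient zero), which is what makes the quadratic approximation $f(\mb{x}) \approx f(\bar{\mb{x}}) + \tfrac12(\mb{x}-\bar{\mb{x}})^T\bar{\mb{H}}(\mb{x}-\bar{\mb{x}})$ legitimate and lets the indefiniteness of $\bar{\mb{H}}$ translate into $\bar{\mb{x}}$ being a saddle of $f$.
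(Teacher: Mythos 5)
Your proposal is correct and rests on the same foundation as the paper's proof (the spectral theorem for the symmetric matrix $\bar{\mb{H}}$ together with non-singularity forcing all eigenvalues to be non-zero); indeed, your ``constructive variant'' with $\mb{y} = \mb{Q}^T\mb{x}$ and $\sum_i \lambda_i y_i^2 = 0$ is essentially the paper's argument verbatim, while your primary contradiction framing (not indefinite $\Rightarrow$ definite $\Rightarrow$ $\mb{x}^T\bar{\mb{H}}\mb{x} \ne 0$) is just a tidier repackaging of the same fact. Your explicit remarks on where non-singularity is needed and on reading ``extreme point'' as ``stationary point'' are sound and, if anything, slightly more careful than the paper's own treatment of the step from indefiniteness to the saddle-point conclusion.
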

\begin{proof}\label{proof_zero_dir}
Kindly provided by Jyrki Lahtonen
(\url{https://math.stackexchange.com/a/4805925}; adapted notation).

Given $\bs{\Lambda}$: diagonal, $n \times n$; $\lambda_{i}$: scalar, $1
\times 1$, diagonal element of $\bs{\Lambda}$; $\mb{V}$: orthogonal, $n
\times n$; $\mb{v}_{i}$: arbitrary, $n \times 1$, column vector of
$\mb{V}$; $\mb{z}$: arbitrary, $n \times 1$; $z_{i}$: scalar, $1 \times
1$, vector element of $\mb{z}$; $a$: scalar, $1 \times 1$.

Since $\bar{\mb{H}}$ is symmetric, we can apply a spectral decomposition
(yielding real eigenvalues $\lambda_{i}$ and orthogonal eigenvectors
$\mb{v}_{i}$):
\begin{align}
\bar{\mb{H}}
&=
\sum\limits_{i = 1}^{n}\lambda_{i} \mb{v}_{i} \mb{v}_{i}^T.
\end{align}
Now we can write
\begin{align}
\label{eq_zero_dir_Qx}
Q\mleft( \mb{x} \mright) & = \mb{x}^T \bar{\mb{H}} \mb{x} = \sum\limits_{i = 1}^{n}\lambda_{i} \mleft( \mb{x}^T \mb{v}_{i} \mright)
\mleft( \mb{v}_{i}^T \mb{x} \mright) = \sum\limits_{i = 1}^{n}\lambda_{i} z_{i}^{2}
\end{align}
where the $z_{i}$ are the projections of $\mb{x}$ onto the eigenvectors
$\mb{v}_{i}$.

Since $\bar{\mb{H}}$ is assumed to be non-singular, we have $\lambda_{i}
\ne 0 \; \forall i = 1\ldots n$.

The chosen vector $\mb{x}$ cannot be a multiple of a single eigenvector
$\mb{v}_{i}$ (i.e. $\mb{x} = a \mb{v}_{i} $ for $a \ne 0$), since then
we would have $\mb{x}^T \bar{\mb{H}} \mb{x} = a^{2} \lambda_{i} $; since
all eigenvalues are non-zero, the assumption $\mb{x}^T \bar{\mb{H}}
\mb{x} = 0$ for $\mb{x} \ne \mb{0}_{n}$ of the lemma would be violated.
We therefore have a least two non-zero projections $z_{i}$ and $z_{j}$
for $i \neq j$ (only two if $\mb{x}$ happens to lie in the sub-space
spanned by $\mb{v}_{i}$ and $\mb{v}_{j}$).

In order to fulfill $Q = 0$ for the given $\mb{x}$, the terms of the sum
in equation \eqref{eq_zero_dir_Qx} need to cancel out. Since we have at
least two non-zero terms, this means that there must exist indices $i$
and $j$ ($i \neq j$) with $\lambda_{i} > 0$ and $\lambda_{j} < 0$;
therefore the matrix $\bar{\mb{H}}$ is indefinite.
\end{proof}
\begin{proof}\label{proof_saddle}
We can now apply Lemma \ref{lemma_zero_dir} to the bordered Hessian at a
stationary point $\bar{\mb{x}}$
\begin{align}
\bar{\mb{H}}_b
&=
\mleft.{\begin{pmatrix}\frac{\partial}{\partial \mb{x}}\,\mleft(
\frac{\partial}{\partial \mb{x}}\,L\mleft( \mb{x},\bs{\lambda} \mright)
\mright)^T & \mleft( \frac{\partial}{\partial \mb{x}}\,\mb{g}\mleft(
\mb{x} \mright) \mright)^T\\\frac{\partial}{\partial
\mb{x}}\,\mb{g}\mleft( \mb{x} \mright) &
\mb{0}_{m,m}\end{pmatrix}}\mright|_{\mb{x} = \bar{\mb{x}}}.
\end{align}
We have to assume that the bordered Hessian is non-singular
(particularly in the stationary point) --- if that wouldn't be the case,
a Newton descent (see below) wouldn't work, since it requires the
inversion of the bordered Hessian.\footnote{There are objective
functions in unconstrained optimization where the Hessian is singular in
a stationary point, e.g. the Bazaraa-Shetty function
\citep[][p.~15]{nn_Alt11}, a fourth-order polynomial. Optimization
textbooks specify conditions for the singularity of the bordered
Hessian, e.g. \citet[p.~240]{nn_Geiger02} and \citet[p.~288]{nn_Alt11},
but I could not derive from these conditions whether singular bordered
Hessians in stationary points are just edge cases.} If we pick a
direction $\mb{z}$
\begin{align}
\mb{z}
&=
\begin{pmatrix}\mb{0}_{n}\\\mb{u}\end{pmatrix}
\end{align}
with $\mb{u} \ne \mb{0}_{m}$ (where $\mb{z}$: arbitrary, $(n + m) \times
1$; $\mb{u}$: arbitrary, $m \times 1$), we see that $\mb{z}^T \mb{H}_b
\mb{z} = 0$, so the assumptions of Lemma \ref{lemma_zero_dir} are
fulfilled and therefore the bordered Hessian is indefinite.\footnote{It
may be interesting to observe that we didn't have to insert the
stationary point into the equation of the bordered Hessian --- the
bordered Hessian is indefinite everywhere, except in singularities.}
\end{proof}
This implies that a gradient descent in the vector comprising both the
original variables and the Lagrange multipliers is not possible since
each fixed point is a saddle point; instead, a Newton descent needs to
be applied. Also, as mentioned above, a line search can't be done in the
Lagrangian, but only in merit functions, or with the novel line-search
criterion introduced here (combined with a special line-search
strategy).

\section{Experiments on 2D Test Functions}\label{sec_exp_twod}

%
In this section, the results of numerical simulations for 2D test
functions are presented. We first provide plots of the singularity of
the Hessian $\opnl{det} \mleft\{ \mb{H} \mright\} = 0$ and of the
zero-crossings of the divergence criterion $\tau$ from \eqref{eq_tau} as
well as of contours of $\check{\tau}$ from \eqref{eq_tauv}
(section~\ref{sec_twod_sing_crit}) and then show trajectories for the
different versions of Newton's method and time courses of the
line-search criteria (section~\ref{sec_twod_newton_ls}).

All plots where produced with Python using additional packages for
numerical processing, symbolic processing, and plotting.\footnote{Python
3.11.13, NumPy 1.26.4, SciPy 1.12.0, SymPy 1.12, Matplotlib 3.8.3}

\subsection{Hessian Singularity and Divergence Criterion}\label{sec_twod_sing_crit}

\subsubsection{Plot Description}\label{sec_twod_sing_crit_plot}

%
\textbf{Figure~\ref{fig_crit_rsnbrk_all}} shows a 2D plot related to the
test function Rosenbrock-wide (see section~\ref{sec_rosenbrock}). In
this and in the following plots of the same type, the name of the test
function is given in the title, with the parameters of the test function
(here $a$, $b$, $c$) added in brackets (or empty brackets, if the
function has no parameters). The contour lines of the test function are
shown with thin blue lines. The roughly logarithmic contour levels
appear in blue font. Solid contour lines correspond to positive levels,
dashed contour lines to negative levels. Gray arrows with wide heads
represent the negative gradients of the test function. Red arrows with
narrow heads are Newton vectors. All vectors are normalized to the same
length for the plot.

The Hessian singularity is plotted as a thick orange curve. For the
visualization of the line-search criterion $\check{\tau}$, we use three
plot elements. The zero crossing of $\tau$ (which captures some points
where $\check{\tau} = 0$) appear as a dark green curve. This curve is
surrounded by dotted lines (better visible in
\textbf{Figure~\ref{fig_crit_rsnbrk_aux}}) which indicate the transition
threshold in $\check{\tau}$ from the zig to the zag phase of the
line-search strategy (see section~\ref{sec_zigzag}). In addition, a
colored contour plot from dark green to white (with logarithmic scaling,
ranging from $10^{-7}$ to $2\cdot 10^{-1}$, the latter roughly
corresponding to the transition threshold) shows the values of
$\check{\tau}$.

The Python Matplot library has the property that zero-crossings of
$\tau$ are also shown at singularities where $\tau$ goes to positive
infinity on one side and to negative infinity on the other side.
Therefore, there may be a green zero-crossing curve hidden underneath
the orange curve of the Hessian singularity even though $\check{\tau}$
is non-zero at these points; the plotting order is chosen such that the
orange curve is on top. This singularity of $\tau$ also explains that
the green and orange curves cross over in some plots, e.g. visible in
\textbf{Figure~\ref{fig_crit_hmlbl}}. In these cross-over points, the
dotted, dark green threshold curves approach the dark green solid curve
of the zero crossing, indicating that $\tau$ gets increasingly larger
slopes when approaching the cross-over point and has a singularity at
the cross-over point.

\subsubsection{Experiments}\label{sec_twod_sing_crit_exp}

%
\textbf{Rosenbrock-wide} (see section~\ref{sec_rosenbrock}): We chose
this version with wider valley over the original Rosenbrock function
since the plots are easier to interpret. In
\textbf{Figure~\ref{fig_crit_rsnbrk_all}} we see that the singularity
curve of the Hessian runs close to the bottom of the narrow valley of
the function value (as computed in section~\ref{sec_rosenbrock}). The
zero crossing of $\tau$ runs exactly at the bottom of the valley,
judging from the fact that it passes through $(0,0)$ and the minimum at
$(1,1)$. In large regions above and below the valley, the Newton vectors
point towards the valley but are surprisingly almost aligned with the
$y$-axis.
\begin{figure}[tp]
\begin{center}
\includegraphics[width=12cm]{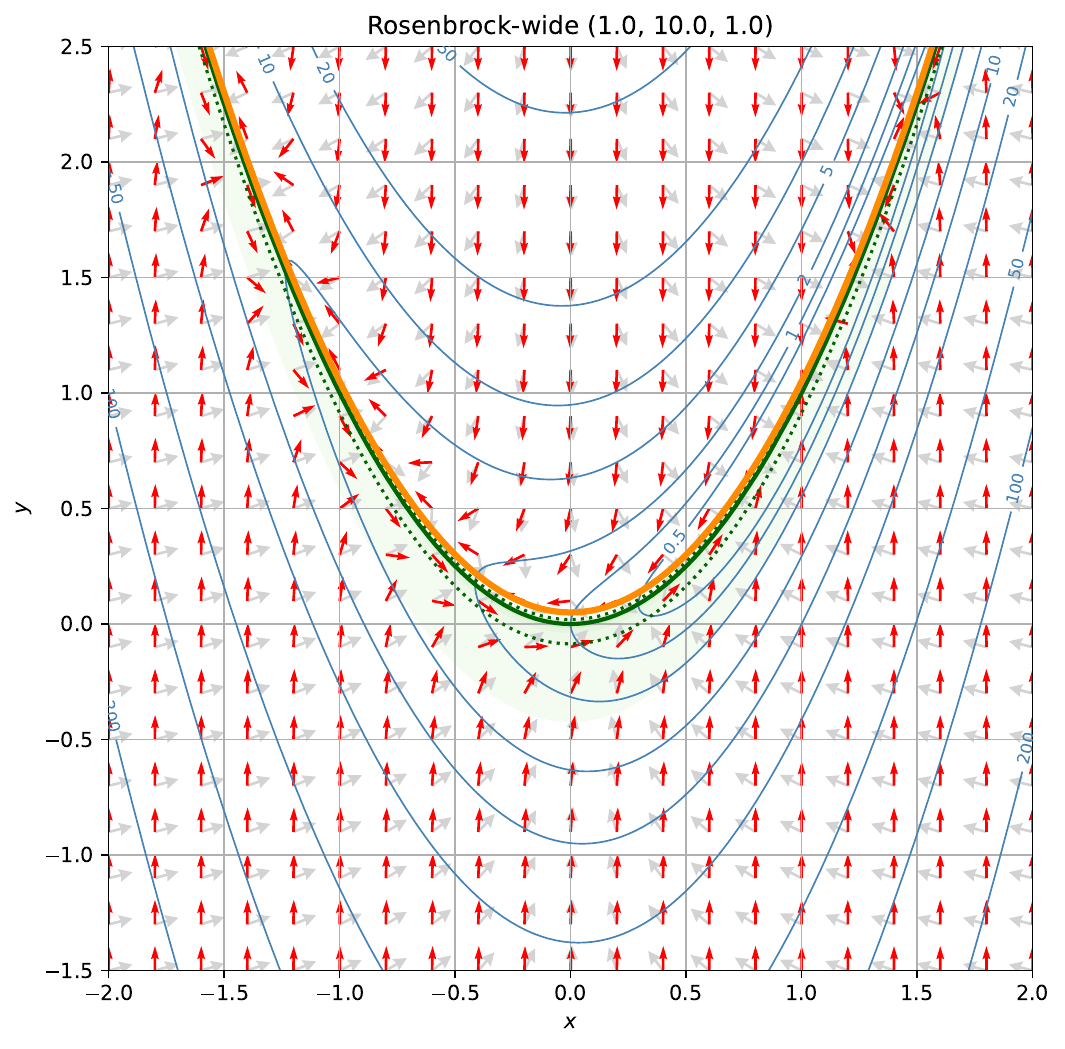}
\caption{}
\label{fig_crit_rsnbrk_all}
\end{center}
\end{figure}
\begin{figure}[tp]
\begin{center}
\includegraphics[height=0.47\textwidth]{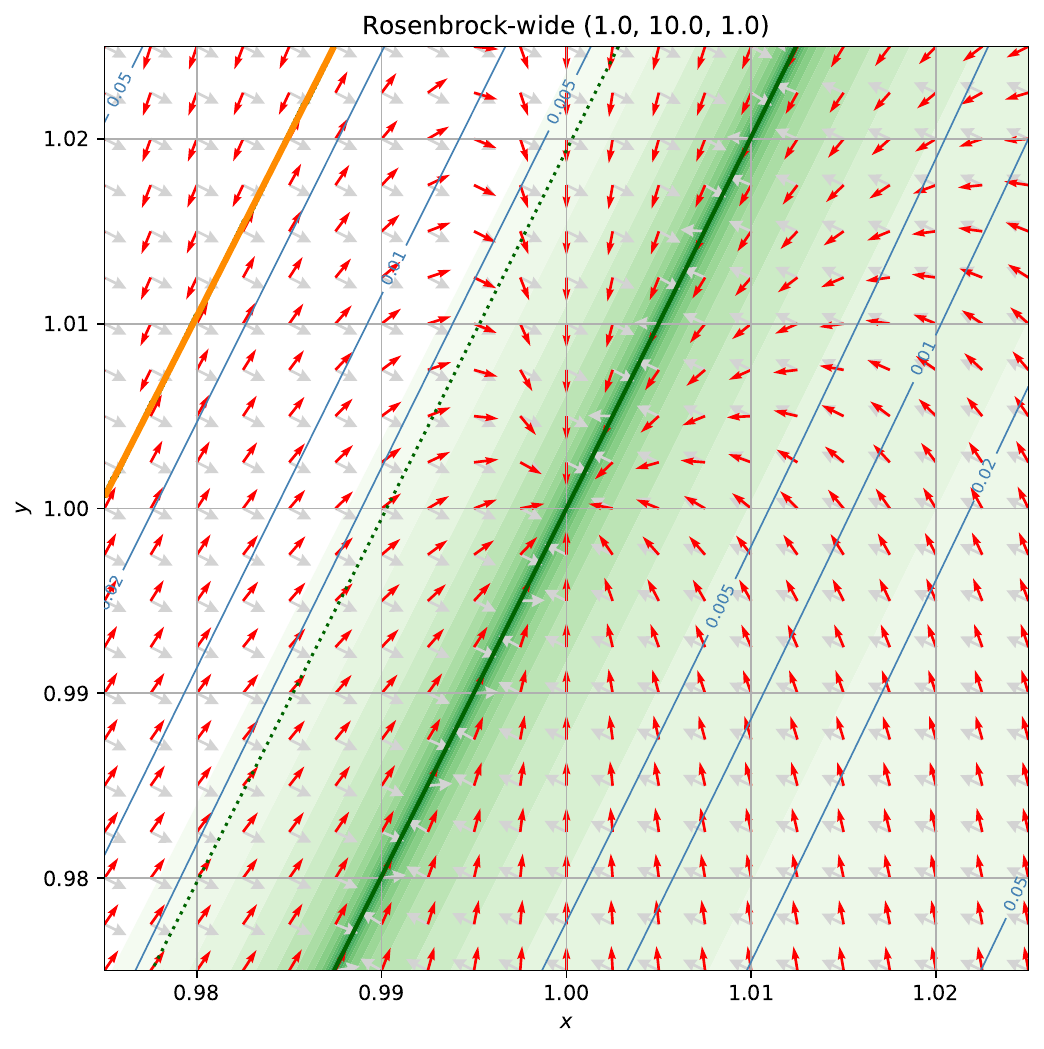}
\hspace*{1mm}
\includegraphics[height=0.47\textwidth]{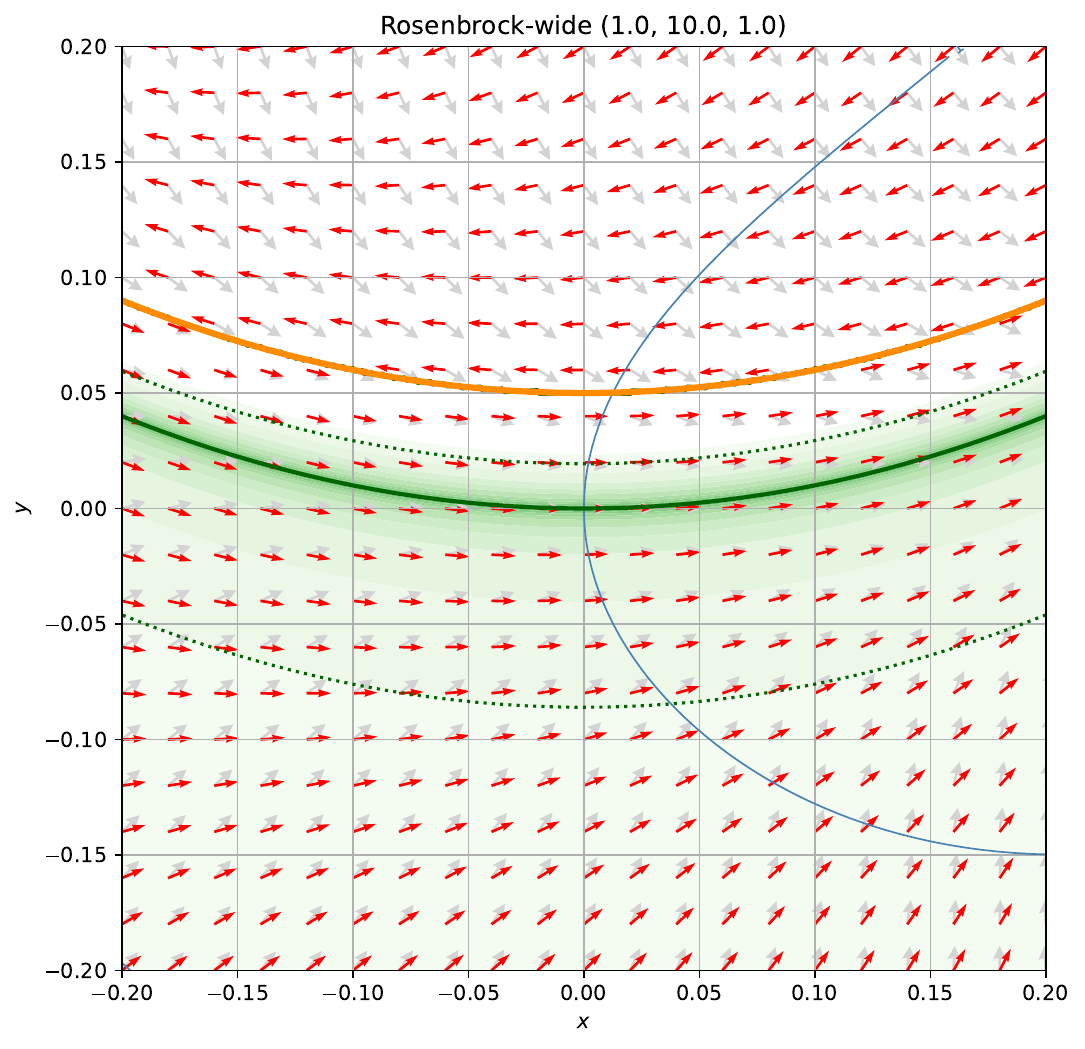}
\caption{}
\label{fig_crit_rsnbrk_aux}
\end{center}
\end{figure}
\textbf{Figure~\ref{fig_crit_rsnbrk_aux}} shows close-ups of the regions
around the minimum at $(1,1)$ (left) and around $(0,0)$ (right). We see
that the Hessian singularity is surrounded by Newton vectors pointing
{\em along} the singularity curve, but into opposite directions. This
seems to be a special type of singularity curve which is related to
winding valleys (or bent valley slopes). In the following, we refer to
this type as ``countercurrent singularities''. In contrast, Hessian
singularities separating the regions corresponding to different
stationary points are surrounded by Newton vectors with roughly opposite
directions, but these point in directions roughly {\em perpendicular} to
the singularity curve, as can be seen for Himmelblau's function in
\textbf{Figure~\ref{fig_crit_hmlbl}}. In the following, we refer to the
latter type as ``inflection singularities''.

Far from the minimum, the Newton vectors point along the valley
(\textbf{Figure~\ref{fig_crit_rsnbrk_aux}}, right), and in the vicinity
of the minimum, all Newton vectors point towards the stationary point,
as can be expected from Newton's method
(\textbf{Figure~\ref{fig_crit_rsnbrk_aux}}, left). Thus, once the method
has reached the valley bottom, it can approach the minimum by following
the ``ravine'' of the criterion $\check{\tau}$.
\textbf{Rosenbrock-wide-saddle}: If the parameter $b$ of Rosenbrock's
function is negative, the stationary point is a saddle. In this case, we
can't speak of a ``valley'' in the objective function. As
\textbf{Figure~\ref{fig_crit_rsnbrk_saddle_all}} shows, the Hessian
singularity curve now appears on the other side of the zero-crossing
curve of $\tau$. Also note the gradient vectors vectors which in this
case point away from the zero-crossing curve in larger distances. The
Newton vectors show the same overall behavior as in the minimum version
of the function. \textbf{Figure~\ref{fig_crit_rsnbrk_aux}} provides
close-ups around $(1,1)$ and $(0,0)$. Again, we see the ``countercurrent
flow'' of the Newton vectors adjacent to the Hessian singularity. Close
to the zero-crossing curve of $\tau$, these vectors point towards the
stationary point. This is promising, as the line-search strategy can
follow the Newton vectors once it has reached the $\check{\tau}$-ravine,
even in the case of a saddle point.
\begin{figure}[tp]
\begin{center}
\includegraphics[width=12cm]{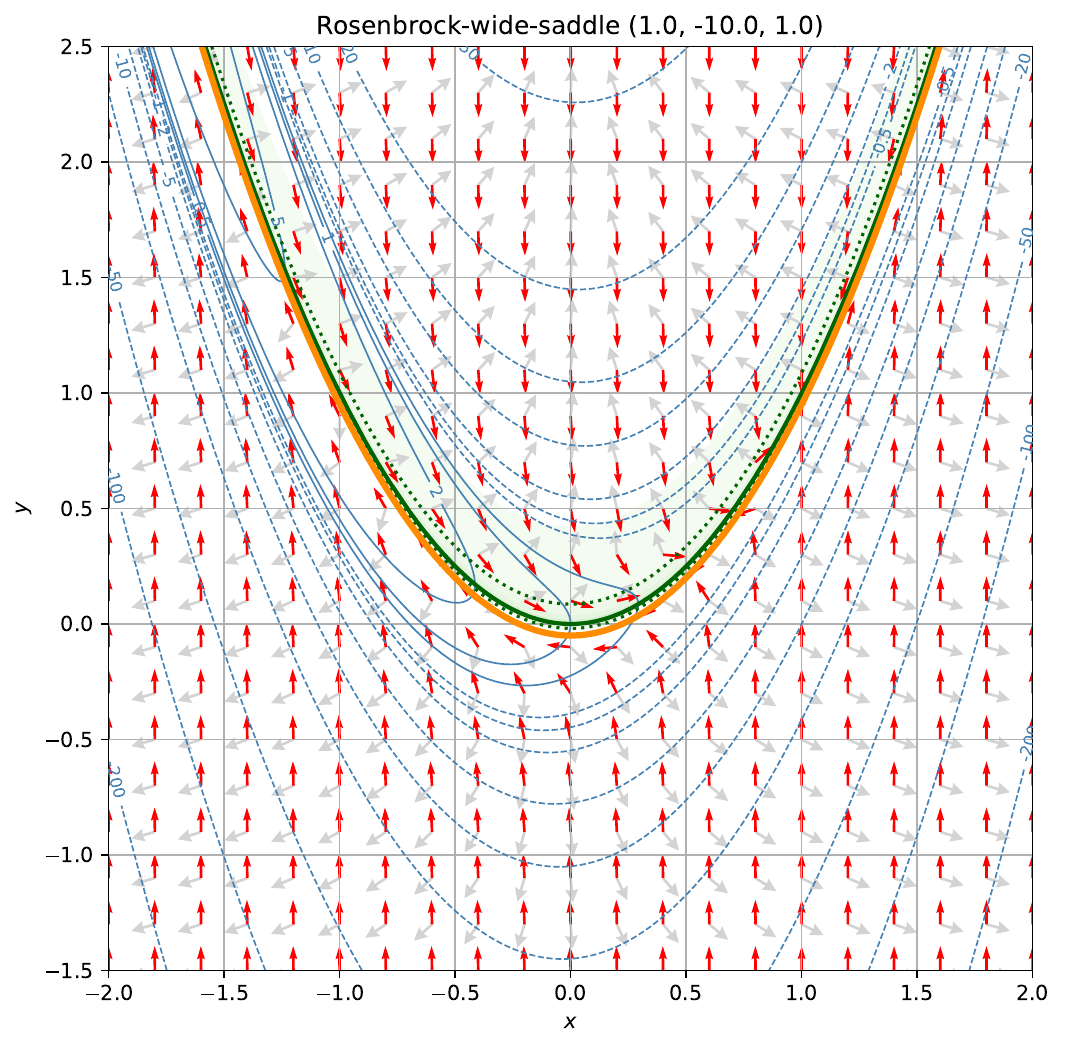}
\caption{}
\label{fig_crit_rsnbrk_saddle_all}
\end{center}
\end{figure}
\begin{figure}[tp]
\begin{center}
\includegraphics[height=0.47\textwidth]{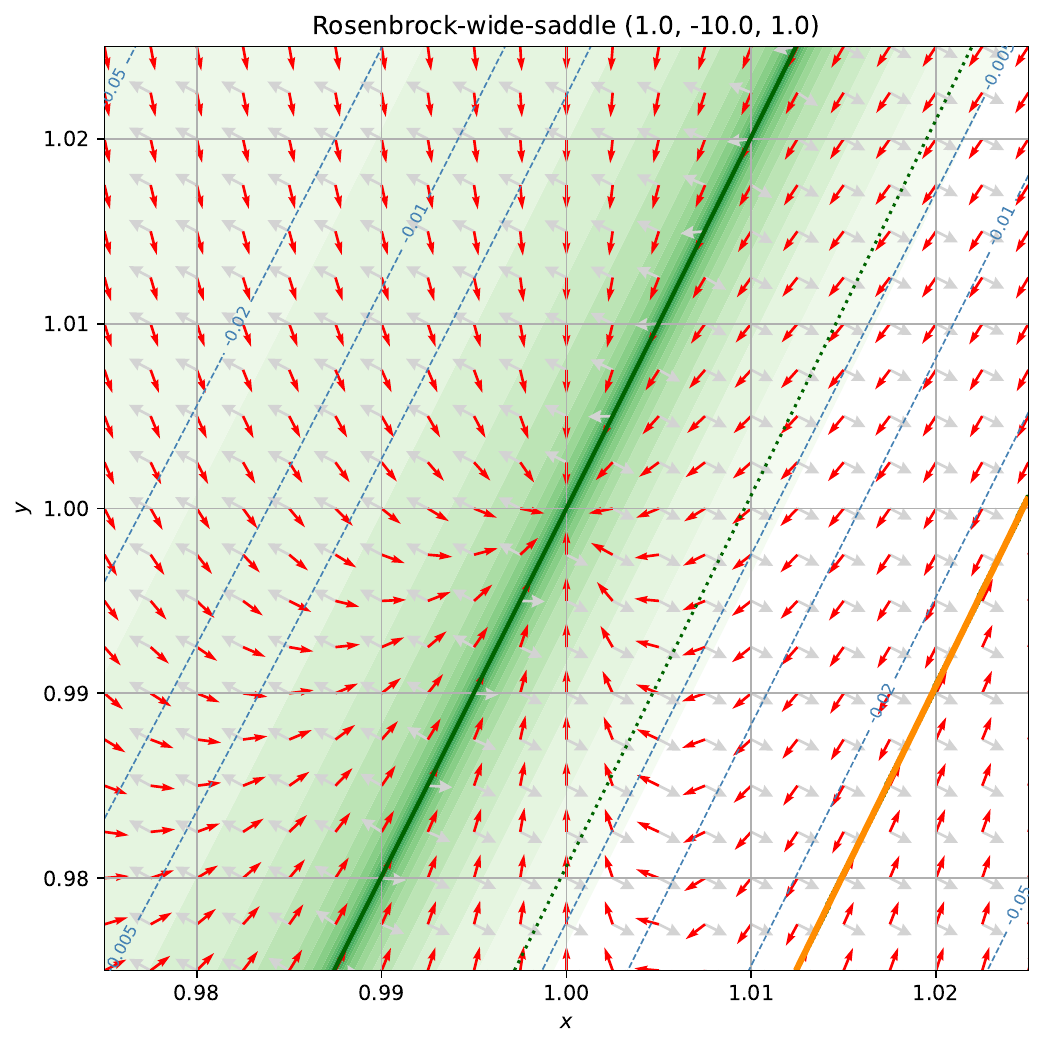}
\hspace*{1mm}
\includegraphics[height=0.47\textwidth]{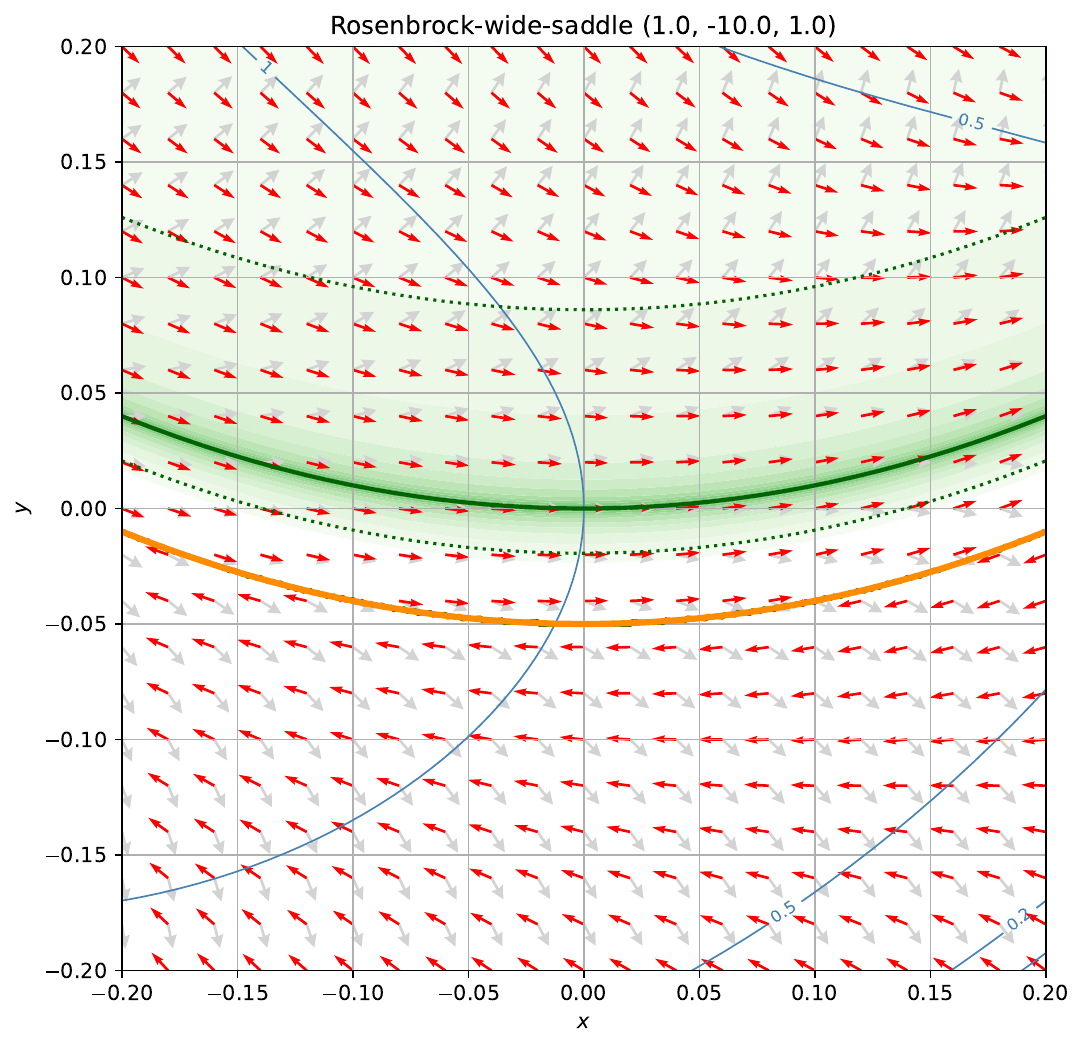}
\caption{}
\label{fig_crit_rsnbrk_saddle_aux}
\end{center}
\end{figure}
\textbf{Rosenbrock-ditch-wide}: However, the simple situation seen in
the Rosenbrock functions (with minimum or saddle) breaks down even with
an apparently minor modification. \textbf{Figure~\ref{fig_crit_ditch}}
shows the Rosenbrock-ditch-wide function (introduced in this work; see
section~\ref{sec_ditch}). In contrast to the Rosenbrock function, it is
obtained by a bending deformation applied to a function which is still
quadratic in $x$-direction but now ``ditch-shaped'' in $y$-direction. In
the 1D ditch shape, the function is quadratic close to the valley
bottom, but approaches a constant value in some distance; it has an
inflection point in between.

Instead of a single ``countercurrent'' singularity and a single
$\tau$-zero-crossing at the bottom of the valley in the Rosenbrock
functions, \textbf{Figure~\ref{fig_crit_ditch}} reveals a total of {\em
four} singularities (two of them of the ``countercurrent'' type) and
even {\em six} $\tau$-zero-crossings. Two of the additional
singularities are caused by the inflection curves of the ditch shape; it
is not clear to me why there is a fourth singularity (of the
countercurrent type, seen at the top in the figure).

There is still a $\tau$-zero-crossing ($\check{\tau}$-ravine) at the
bottom of the valley (running through $(0,0)$ and through the minimum at
$(1,1)$, as in the Rosenbrock functions), but there is an additional
$\check{\tau}$-ravine running adjacent to it. Beyond the
inflection-point singularities, two more $\check{\tau}$-ravines appear,
and further two which are supposedly related to the countercurrent
singularity seen at the top of the figure.

This example reveals the limits of following a $\check{\tau}$-ravine as
suggested in this work. If inflection-point singularities are present,
the increasing number of $\check{\tau}$-ravines may misdirect the line
search. However, is is also visible in the figure that even the plain
Newton method likely fails for a large portion of the starting points in
the range of the figure, namely trajectories starting outside the region
delimited by the two inflection-point singularities.
\begin{figure}[tp]
\begin{center}
\includegraphics[width=12cm]{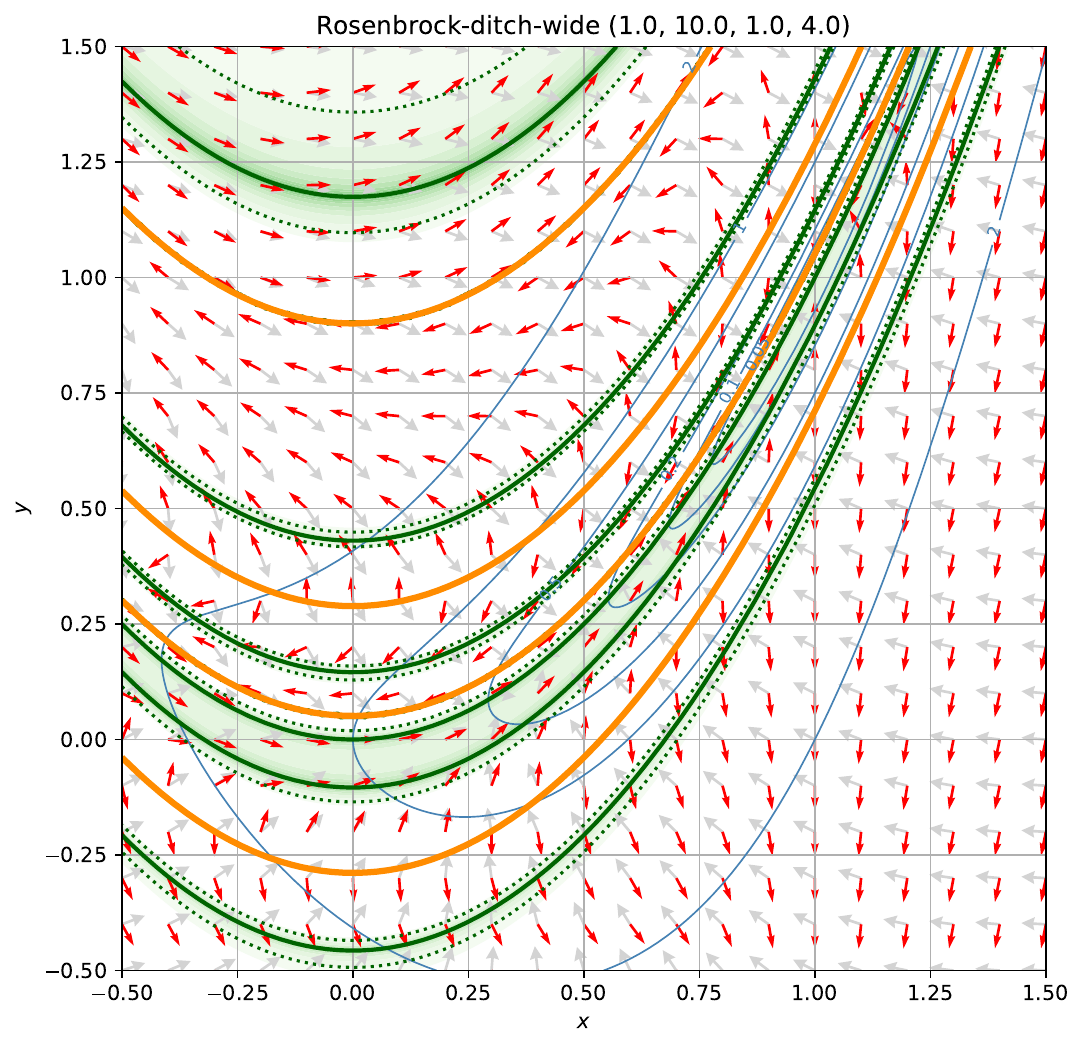}
\caption{}
\label{fig_crit_ditch}
\end{center}
\end{figure}
\textbf{Rosenbrock-ditch-wide-straight}: For comparison,
\textbf{Figure~\ref{fig_crit_ditch_straight}} shows the Rosenbrock ditch
function for $c = 0$. In this case, the ditch is straight. As can be
expected, two inflection singularities are present. The two
countercurrent singularities disappear for $c = 0$ (indicating that
these are due to the bending). There are three $\check{\tau}$-ravines,
one of them running at the bottom of the valley of the objective
function. In contrast, if $c = 0$ is used for the original Rosenbrock
function, we obtain $\check{\tau} = 0$ everywhere since the third-order
terms in \eqref{eq_tau} are zero for purely quadratic functions.
\begin{figure}[t]
\begin{center}
\includegraphics[width=12cm]{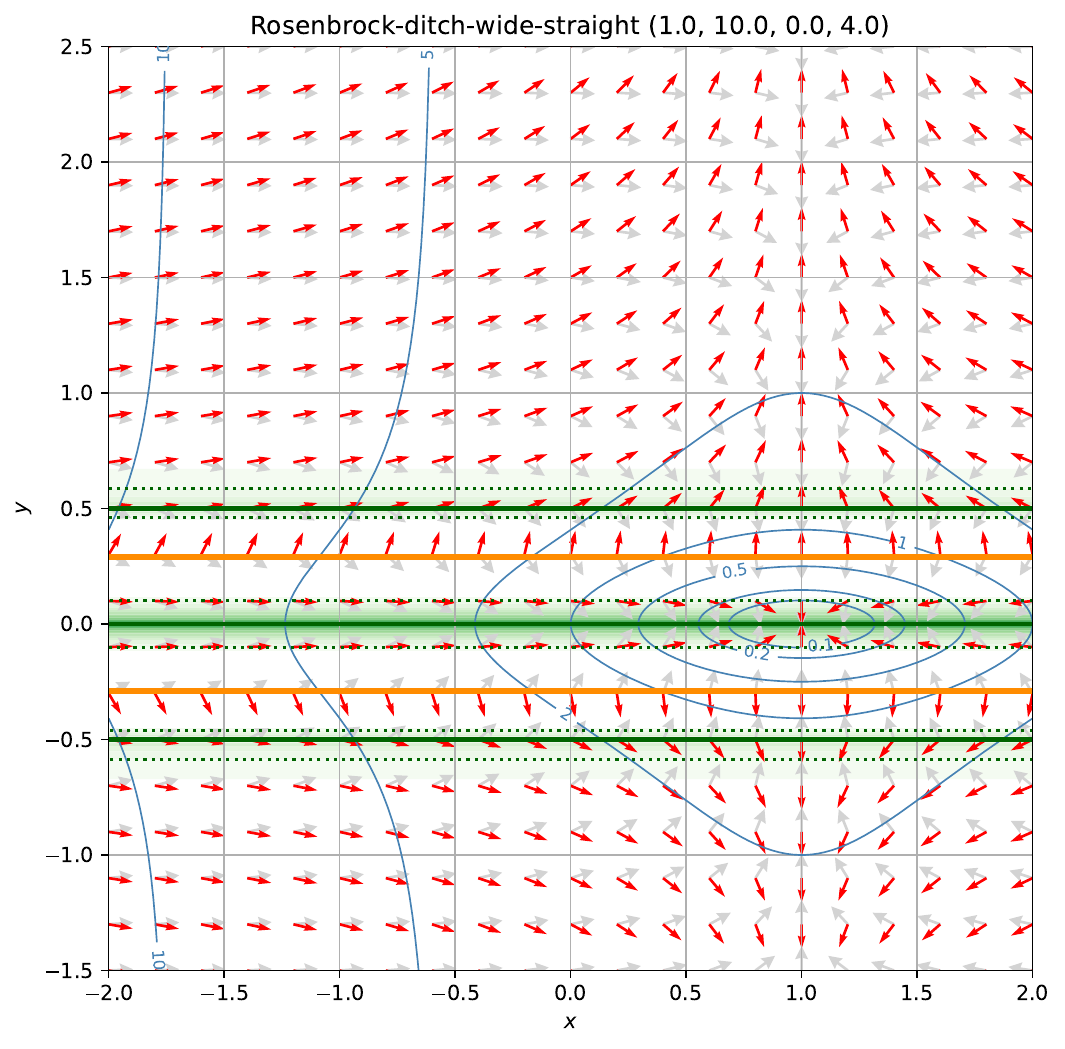}
\caption{}
\label{fig_crit_ditch_straight}
\end{center}
\end{figure}
\textbf{Himmelblau's function} (see section~\ref{sec_himmelblau}) was
included in the tests as it provides multiple stationary points of
different types, albeit without obvious valleys in the objective
function. The results are shown in \textbf{Figure~\ref{fig_crit_hmlbl}}.
We see that the Hessian singularity completely cuts off the four minima
and the maximum (assuming that the curves leaving the diagram run to
infinity). Interestingly, the four saddle points in between are not
perfectly separated from each other; instead there are narrow
``straits'' connecting these stationary points.\footnote{I assume that
in a 2D function all saddle points are connected by straits since they
all have the same set of eigenvalue signs: one value negative, the other
positive. In higher-dimensional functions, the ``inertia'' of the
Hessian, i.e. the tuple with the number of positive, negative, and zero
eigenvalues, can differ for different saddle points. In this case,
saddle points with different inertia are probably separated by a
singularity of the Hessian. It is, however, not clear to me, whether the
eigenvalues alone are sufficient to describe the separation of saddle
points, or whether the eigenvectors also need to be considered.} The
$\check{\tau}$-ravines run through all stationary points. In contrast to
Rosenbrock's function, the Newton vectors in the ravine are not exactly
pointing along the ravine's bottom; however, their overall direction
would still lead the method to the stationary points.
\begin{figure}[t]
\begin{center}
\includegraphics[width=12cm]{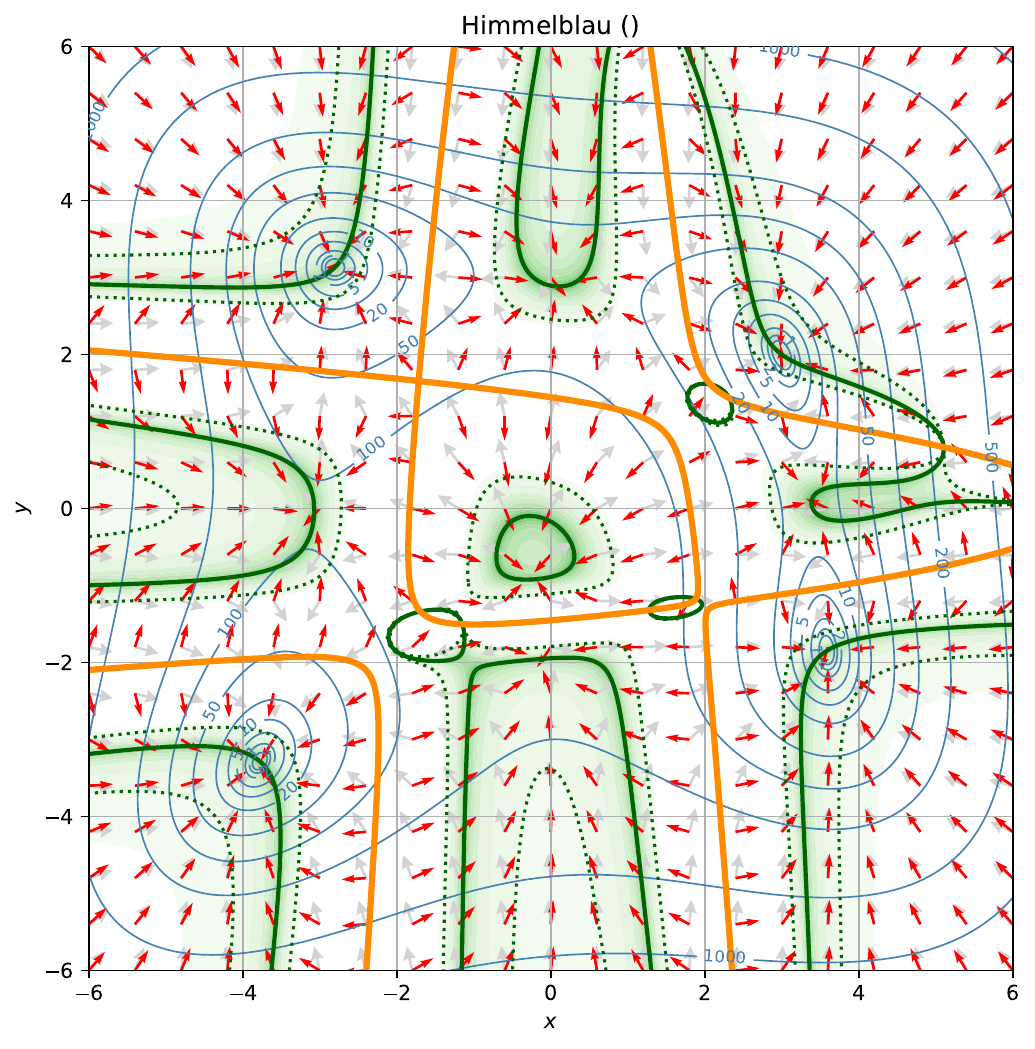}
\caption{}
\label{fig_crit_hmlbl}
\end{center}
\end{figure}
Potentially problematic for the method suggested in this work are the
three small, closed zero-crossing curves near the straits connecting the
saddle point regions. As these appear to be unrelated to any stationary
point, they may capture the method and prevent a convergence towards a
stationary point. It is visible from the missing color contours around
these three closed loops that these ravines are also steep, in contrast
the other ravines in the diagram.

The maximum in the center is also part of a small, closed ravine. This
may only be problematic if the method arrives in the looped ravine on
the opposite side from the maximum, as it could slow down the progress
(a special parallelity check in the line-search strategy described below
may prevent the method from getting stuck).

A plot for the \textbf{H\'enon-Heiles} system (see
section~\ref{sec_henonheiles}) is shown in
\textbf{Figure~\ref{fig_crit_hnnhls_all}}. The minimum at $(0,0)$ is
separated from the rest of the plane by a Hessian singularity. A closed
zero-crossing curve of $\tau$ runs through the three saddle points and
touches the singularity at three places (with a narrowing
$\check{\tau}$-ravine as described above). The Newton vectors would lead
a method that is traveling through the ravine to one of the saddle
points (unless is starts at the points where the ravine touches the
singularity). However, the minimum point is not connected to the ravine
network, but an isolated minimum in $\check{\tau}$; this is a property
not seen in the previous examples.
\begin{figure}[t]
\begin{center}
\includegraphics[width=12cm]{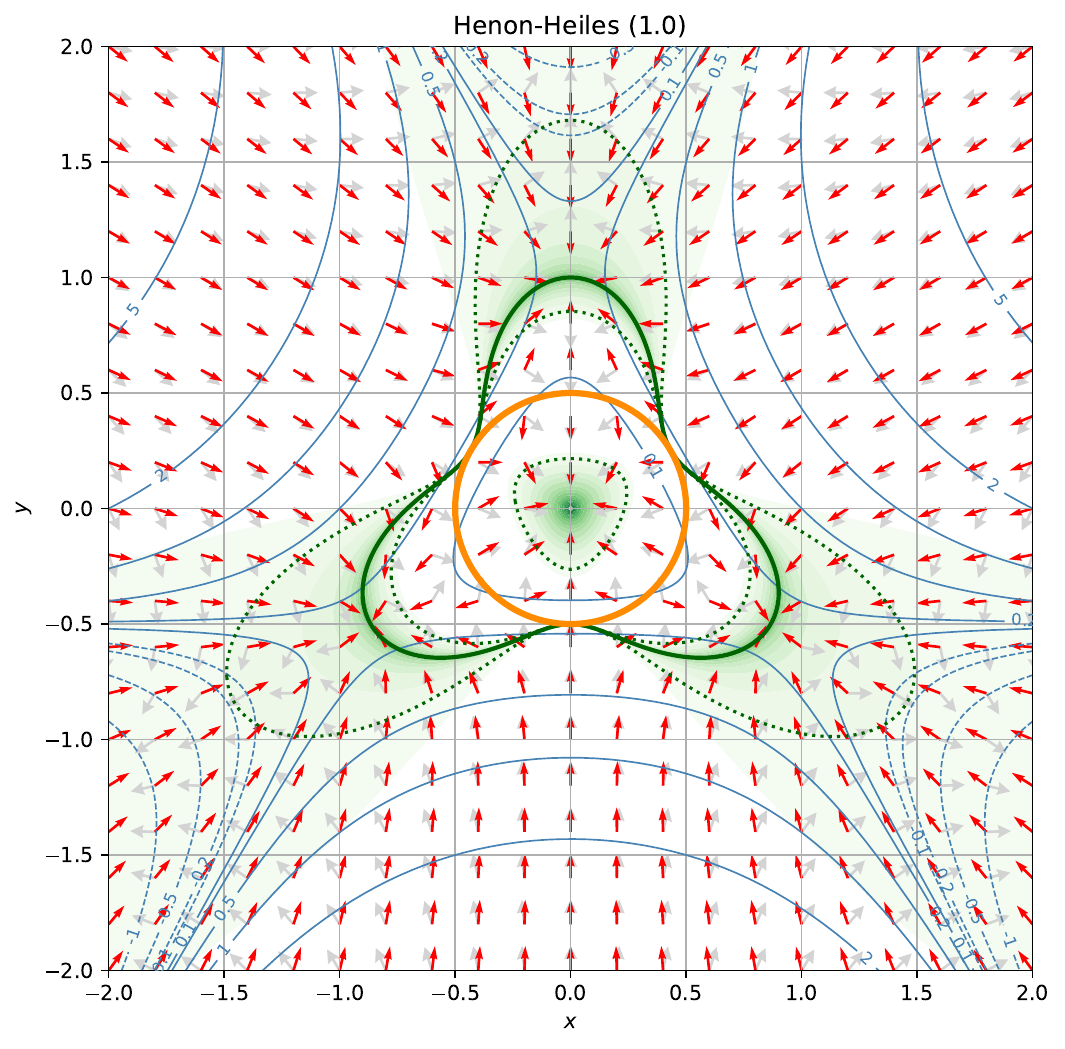}
\caption{}
\label{fig_crit_hnnhls_all}
\end{center}
\end{figure}
We now continue with four considerably more complex functions.

\textbf{Junction1}: This function, shown in
\textbf{Figure~\ref{fig_crit_junction1_all}}, is constructed from two
crossing ditches superimposed on a quadratic function (see
section~\ref{sec_junction}). This function was designed to explore
whether $\check{\tau}$-ravines can cross in a minimum point.

We see from \textbf{Figure~\ref{fig_crit_junction1_all}} that the number
of Hessian singularies and $\check{\tau}$-ravines is quite large. At
least three singularity curves run along the valleys, as do up to five
$\check{\tau}$-ravines. However, the $\check{\tau}$-ravines don't cross
in the minimum point at $(0,0)$. Instead, as visible in
\textbf{Figure~\ref{fig_crit_junction1_aux}} (left), the
$\check{\tau}$-ravines ``shy away'' from the minimum, and the minimum of
the objective function corresponds to an isolated minimum in
$\check{\tau}$ (surrounded by four small, looped zero-crossings of
$\tau$). At least in the arms of the vertical ditch, there are
zero-crossings of $\tau$ close to the valley bottom, but they are
folding back and peter out before they reach the minimum; see
\textbf{Figure~\ref{fig_crit_junction1_aux}} (right).

Overall, this complex situation obviously doesn't offer itself to an
application of the line-search strategy suggested in this work. The
reasons are similar to the ones discussed for the Rosenbrock ditch
function.
\begin{figure}[tp]
\begin{center}
\includegraphics[width=12cm]{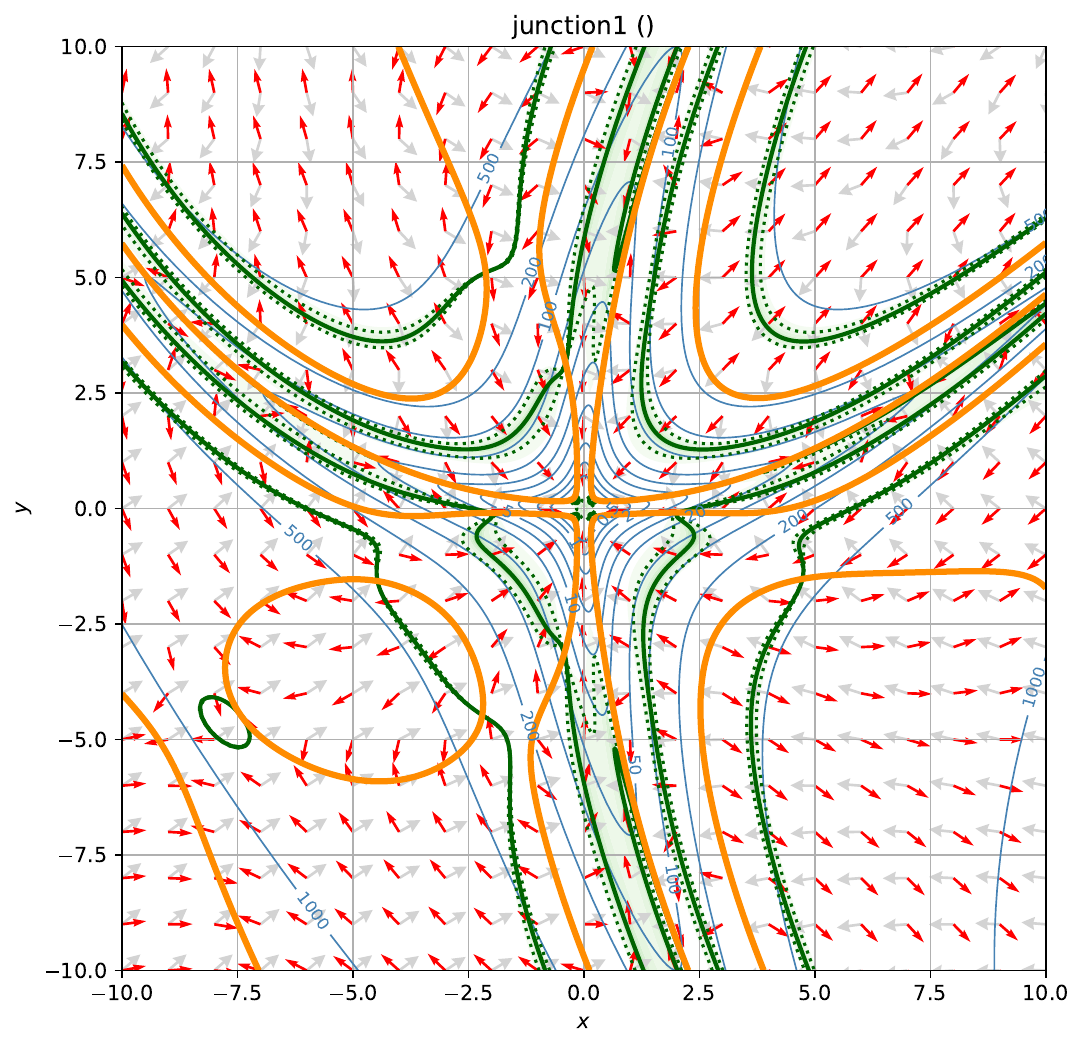}
\caption{}
\label{fig_crit_junction1_all}
\end{center}
\end{figure}
\begin{figure}[tp]
\begin{center}
\includegraphics[height=0.47\textwidth]{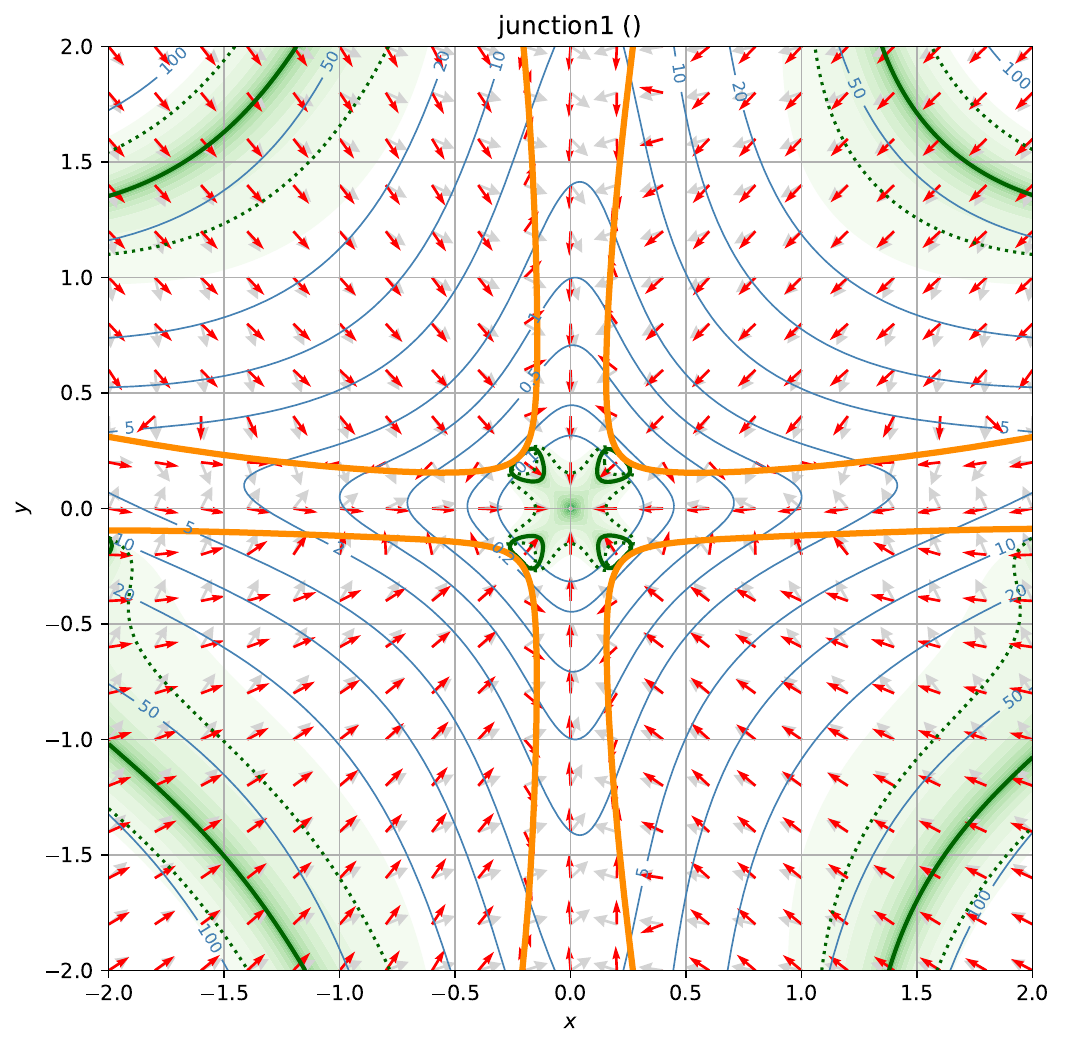}
\hspace*{1mm}
\includegraphics[height=0.47\textwidth]{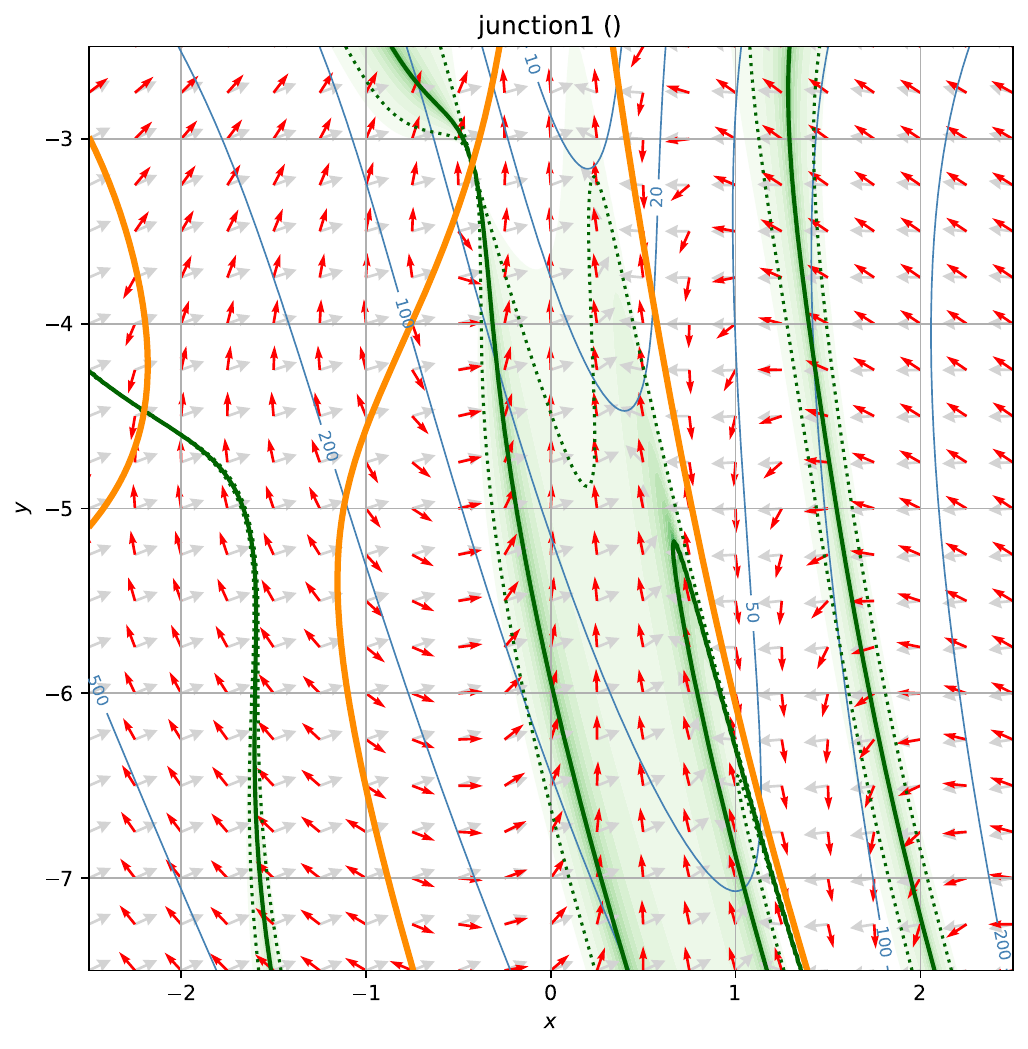}
\caption{}
\label{fig_crit_junction1_aux}
\end{center}
\end{figure}
\textbf{Junction2}: This objective function, shown in
\textbf{Figure~\ref{fig_crit_junction2_all}}, has one straight and one
bent ditch. Still, there is an isolated $\check{\tau}$-minimum at the
stationary point $(0,0)$. The overall situation is quite complex as for
junction1.
\begin{figure}[tp]
\begin{center}
\includegraphics[width=12cm]{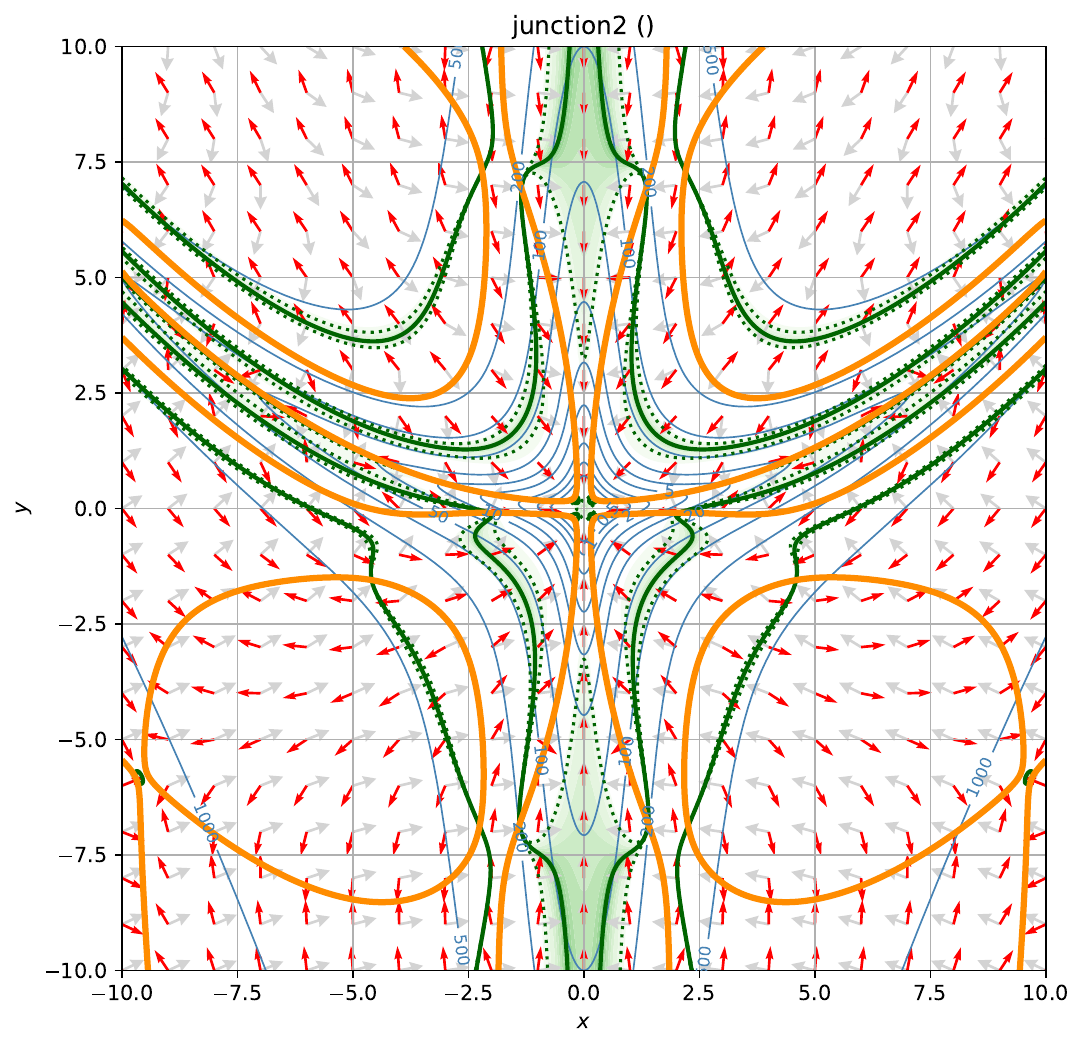}
\caption{}
\label{fig_crit_junction2_all}
\end{center}
\end{figure}
\begin{figure}[tp]
\begin{center}
\includegraphics[height=0.47\textwidth]{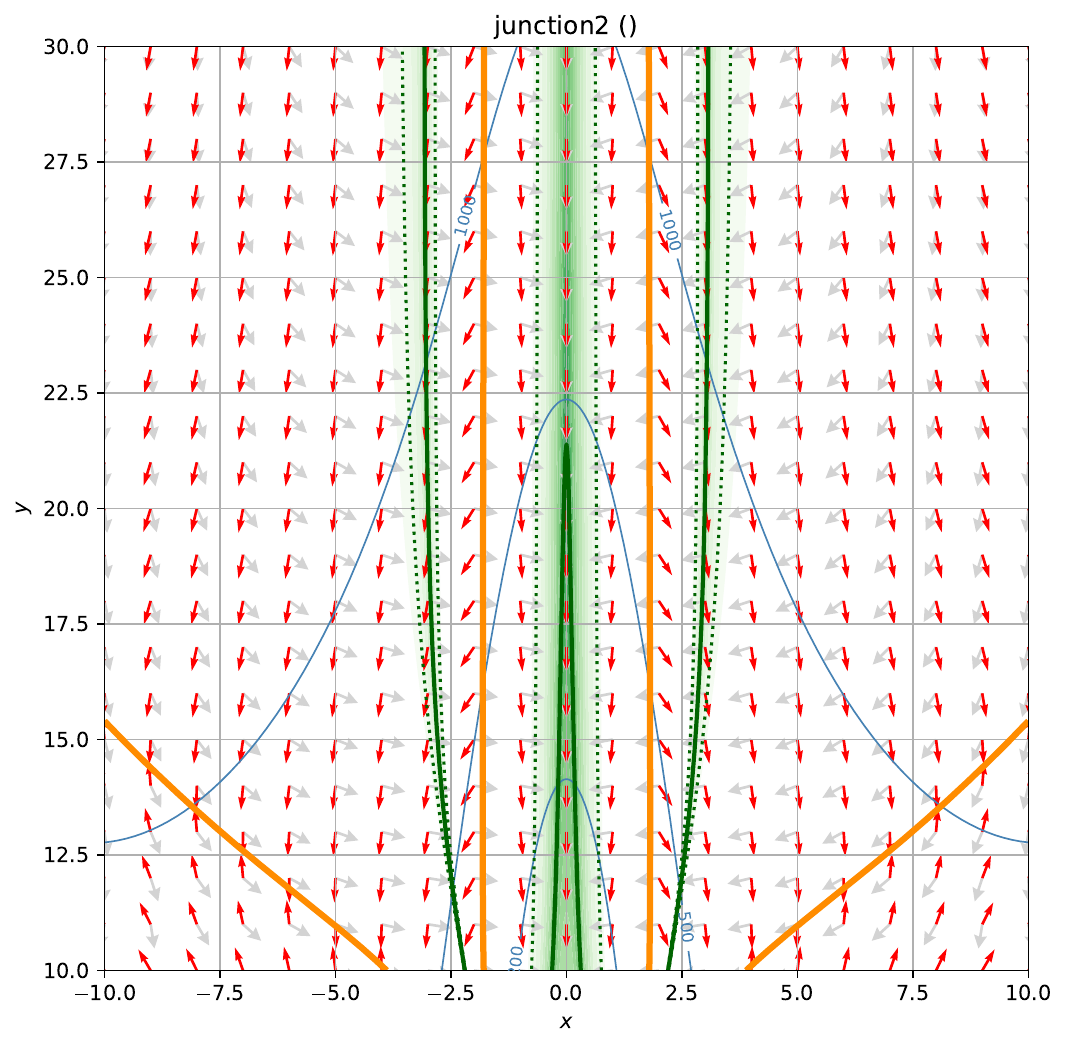}
\hspace*{1mm}
\includegraphics[height=0.47\textwidth]{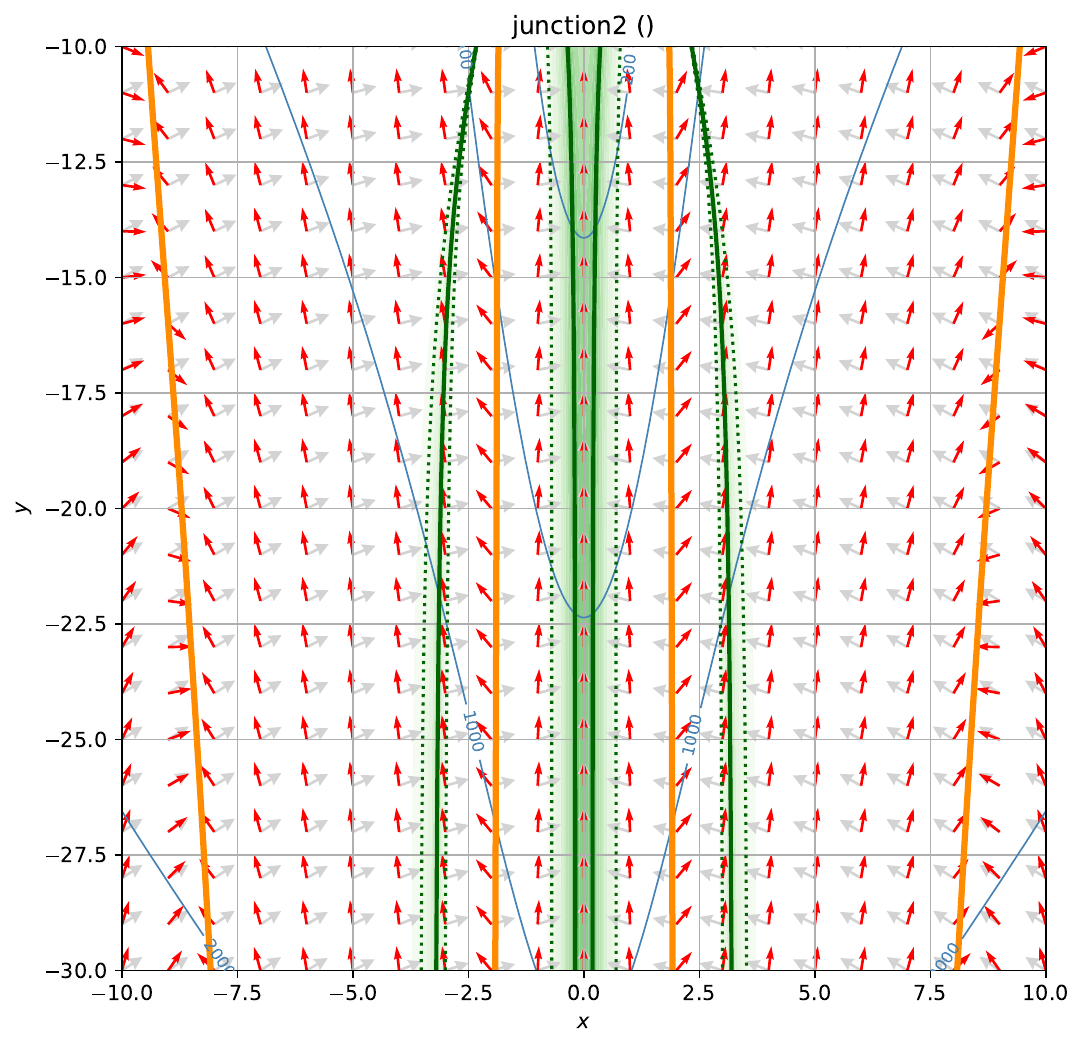}
\caption{}
\label{fig_crit_junction2_aux}
\end{center}
\end{figure}
\textbf{Figure~\ref{fig_crit_junction2_aux}} shows how the straight
vertical valley continues upwards (left) and downwards (right) from the
range in \textbf{Figure~\ref{fig_crit_junction2_all}}. It is not clear
to me (particulary when compared with the Rosenbrock-ditch-wide-straight
function) why there is a {\em double} $\check{\tau}$-ravine running at
the valley bottom. It is also not clear why this double ravine peters
out into a single ravine at the upper continuation (left), but not at
the lower one (right). Also, the two singularities run outwards at the
upper continuation, but stay in the vicinity of the valley in the lower
one.

\textbf{Goldstein-Price}: This function
(section~\ref{sec_gldstnprc_beale}) also exhibits two crossing valleys,
in this case running in diagonal directions; both valleys appear to be
straight. \textbf{Figure~\ref{fig_crit_gldstnprc}} shows this function
on the left side, with a closeup around the minimum at $(0,-1)$ on the
right. The overall situation, albeit still being complex, markedly
differs from the junction functions, though. The minimum at $(0,-1)$ is
not isolated, but contained in a $\check{\tau}$-ravine. There are no
$\check{\tau}$-ravines running at they bottom of the valleys; instead,
some looped ravines appear. This is probably related to a different
cross section of the valleys compared to the junction functions, and to
the fact that the valleys are straight, but this has not been explored
further so far. Unfortunately, we have to conclude that the relation
between $\check{\tau}$-ravines and valley bottoms in the objective
function doesn't seem to be stable.
\begin{figure}[tp]
\begin{center}
\includegraphics[height=0.47\textwidth]{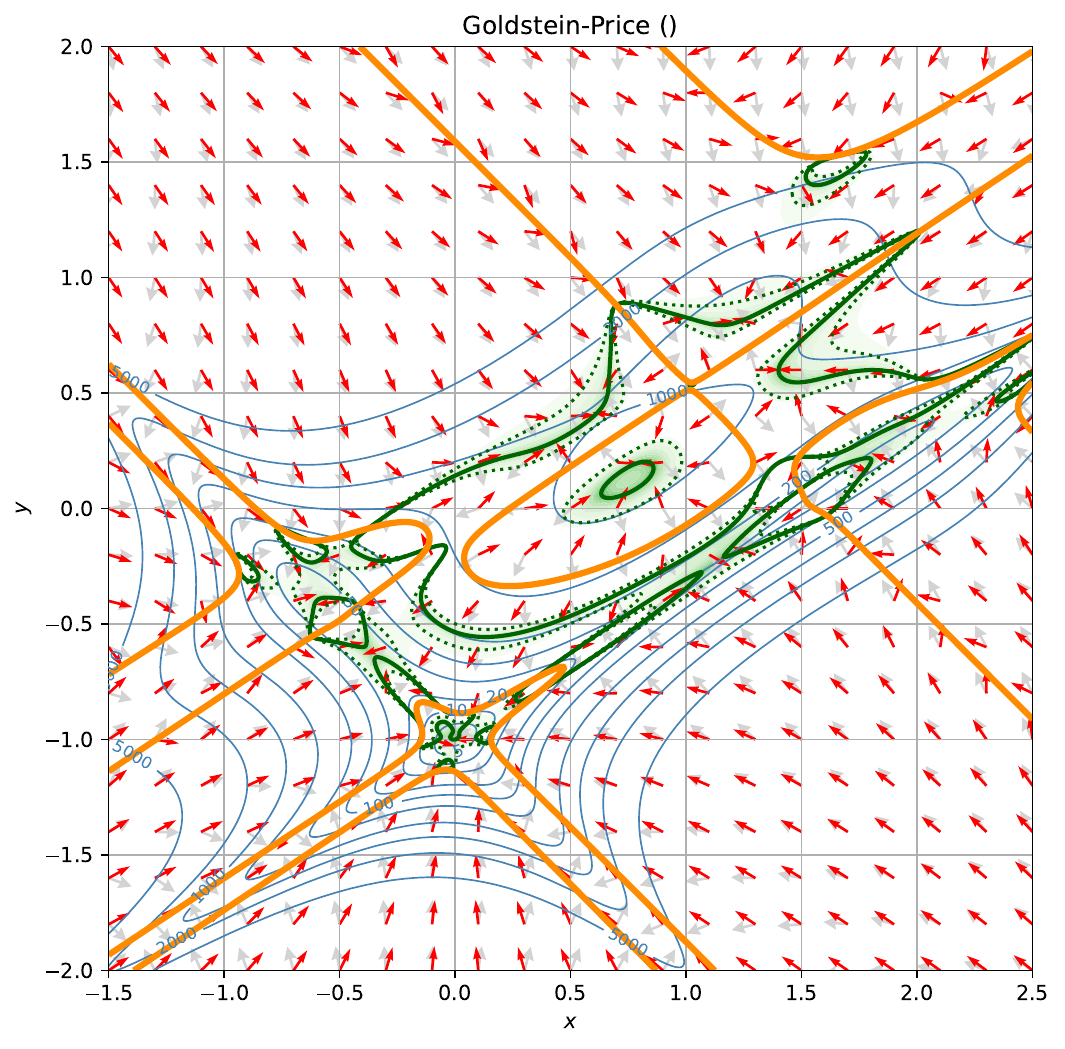}
\hspace*{1mm}
\includegraphics[height=0.47\textwidth]{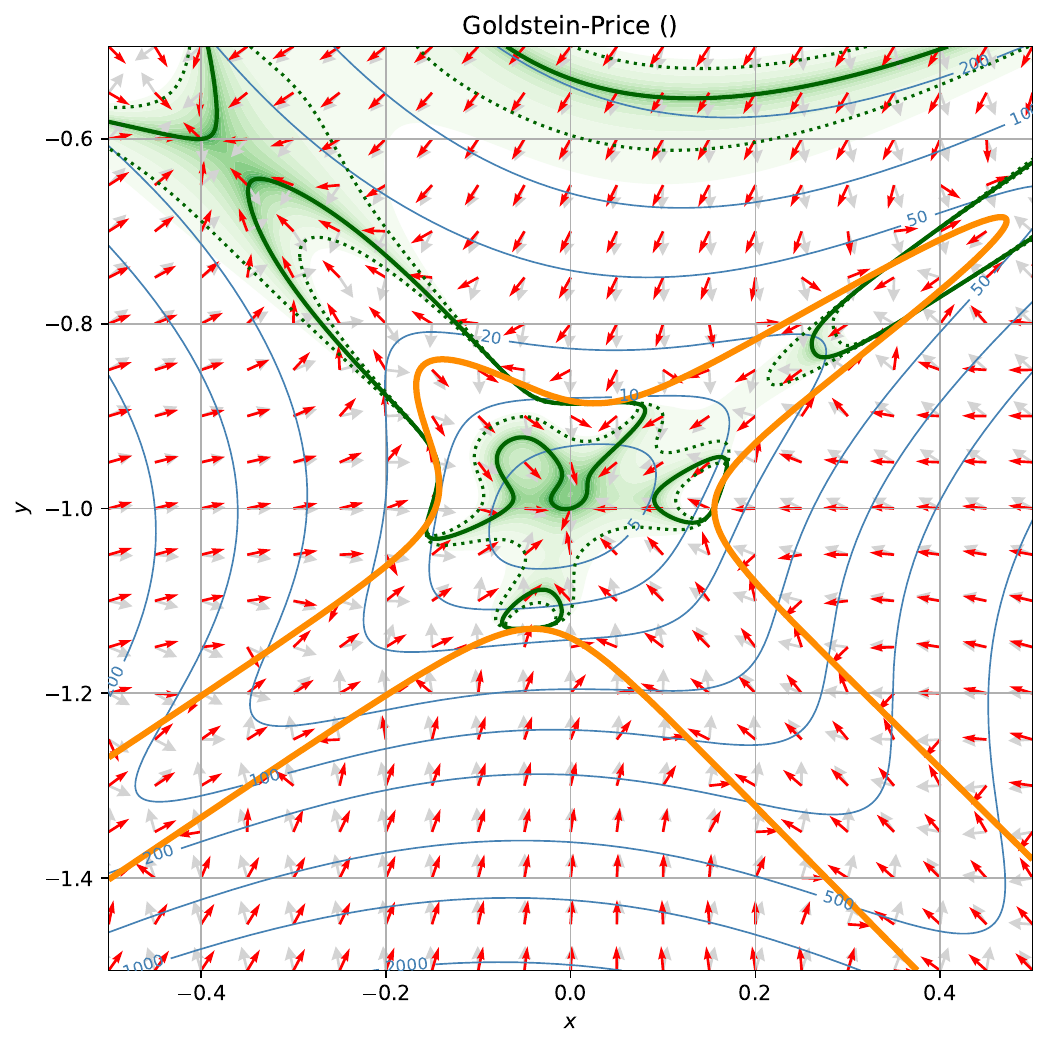}
\caption{}
\label{fig_crit_gldstnprc}
\end{center}
\end{figure}
\begin{figure}[tp]
\begin{center}
\includegraphics[height=0.47\textwidth]{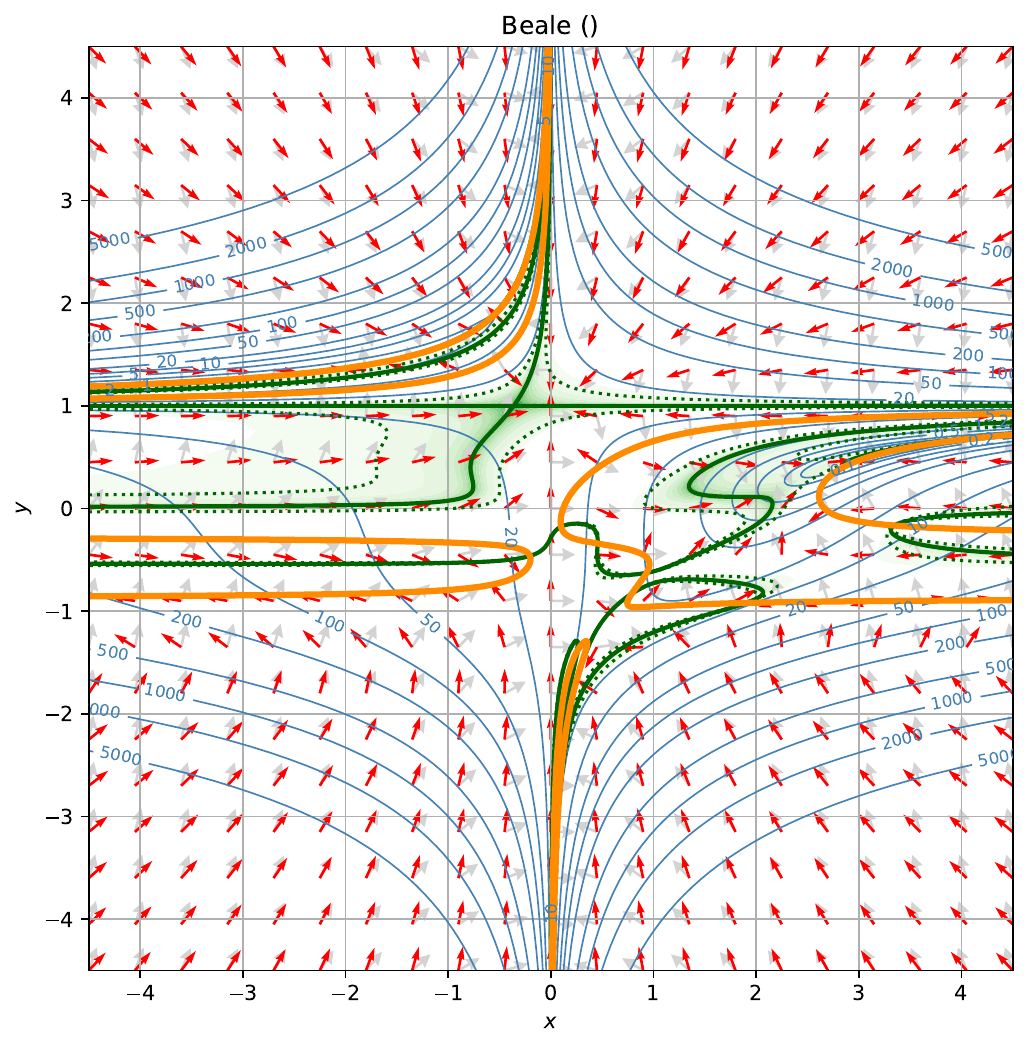}
\hspace*{1mm}
\includegraphics[height=0.47\textwidth]{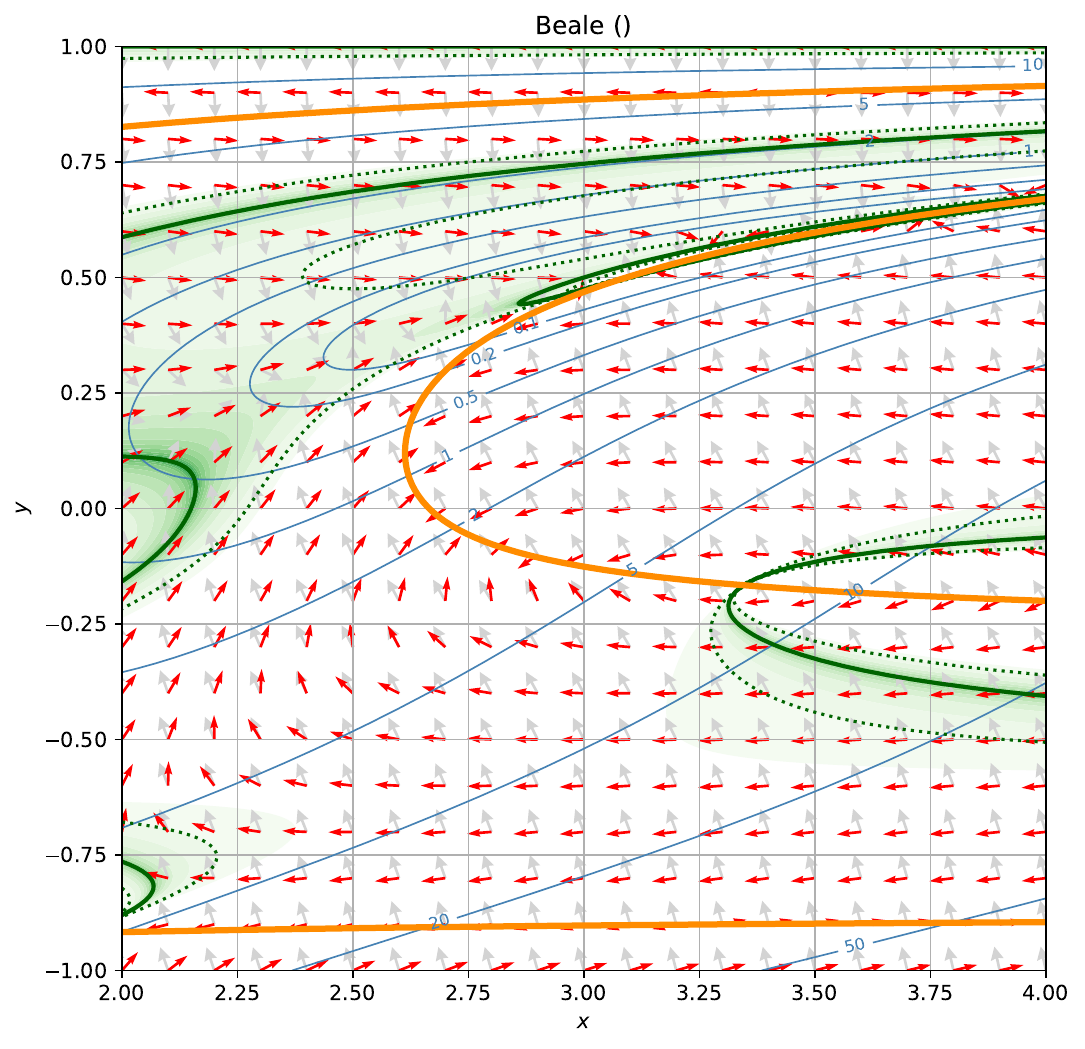}
\caption{}
\label{fig_crit_beale}
\end{center}
\end{figure}
\textbf{Beale}: Also the Beale function
(section~\ref{sec_gldstnprc_beale}) shown in
\textbf{Figure~\ref{fig_crit_beale}} (left) exhibits a complex situation
with respect to singularities and $\check{\tau}$-ravines. Here the
minimum at $(3,0.5)$ is not at the crossing of the two valleys (running
vertically and horizontally through $(0,0)$), but at another valley-like
structure (magnified in the right diagram). There we have a double,
looped $\check{\tau}$-ravine including the stationary point. It is also
interesting that there is a crossing point of $\check{\tau}$-ravines
(left diagram, at around $(0.5,1)$), though a stationary point seems to
lie a little distance away from the crossing at $(0,1)$. This feature
didn't appear in the previous plots.

\subsection{Application of Newton's Method and Line Search}\label{sec_twod_newton_ls}

%
In the following, we apply different versions of Newton's method to the
2D test functions studied above.

\subsubsection{Nomenclature of Methods}\label{sec_twod_newton_ls_nom}

%
We use a nomenclature of the different methods comprising three
components (joined by a hyphen): letter \texttt{S} followed by the
line-search strategy, letter \texttt{M} followed by the line-search
method, and letter \texttt{C} followed by the line-search criterion. The
distinction between line-search strategy and line-search method had to
be introduced for the zigzag method suggested in this paper, since
traversing the $\check{\tau}$-ravines requires a combination of multiple
line-search methods in a relatively complex, overarching line-search
strategy.

\subsubsection{Methods}\label{sec_twod_newton_ls_meth}

%
The implementation of Newton's method computes Hessian and gradient for
the current position, determines the Newton step, and for a non-zero
Newton step invokes the selected line-search strategy (otherwise it
stops). If the Euclidean distance between the last point and the new
point determined by the line-search strategy falls below a threshold
($10^{-5}$) or if more than $100$ steps were performed, the method
terminates.

The method \texttt{Sno-Mno-Cval2} is the plain Newton method without any
line search (no strategy, no method). The criterion \texttt{Cval2}
(relating to the value of the objective function) is only specified in
the program for visualization purposes.

The method \texttt{Sno-Mex-Cval2} is a straight-forward explicit line
search on the value of the objective function (criterion
\texttt{Cval2}). The method can therefore only successfully be applied
to approach minima, but not saddle points. The line-search strategy
\texttt{Sno} falls through to the explicit line-search method
\texttt{Mex}. Explicit line search splits the $\alpha$ range $[0,1]$
into $100$ steps and determines the minimum of the criterion for the
corresponding points.\footnote{Note that $\alpha = 0$ is included. This
may prevent any progress in some cases.}

The methods \texttt{Szz-Mlm-Ctau} and \texttt{Szzp-Mlm-Ctau} are the
ones introduced in this work. They apply the zigzag strategy outlined in
section~\ref{sec_zigzag}. The two strategies \texttt{Szz} and
\texttt{Szzp} just differ with respect to whether the parallelity check
(see below) is invoked (indicated by \texttt{p}) or not (the
implementation is the same, the parallelity check is just deactivated).
In the following, the line-search strategy (\texttt{Szz}/\texttt{Szzp})
and the line-search method (\texttt{Mlm}) are described in detail.

\subsubsection{Implementation}\label{sec_twod_newton_ls_impl}

%
\textbf{Zigzag strategy:} The zigzag strategy
(\texttt{Szz}/\texttt{Szzp}, overview in section~\ref{sec_zigzag}) has
two thresholds with respect to the criterion: The first threshold
($10^{-3}$) defines if the bottom of a $\check{\tau}$-ravine has been
reached, the second threshold ($10^{-1}$) defines the transition point
between the zig and the zag phase. We refer to these parameters as
``entry threshold'' and ``escape threshold'', respectively.

At the beginning, the zigzag strategy determines the current value of
the criterion ($\check{\tau}$ from \eqref{eq_tauv}, in the nomenclature
\texttt{Ctau}), and then proceeds in two different cases:

{\em Case 1:} If the current value of the criterion is larger than the
entry threshold, we are not at the bottom of a $\check{\tau}$-ravine,
and therefore a specific line-search method (\texttt{Clm}, see below) is
invoked. This is referred to as the \textbf{down phase}, as we attempt
to move down into a $\check{\tau}$-ravine. If the criterion at the
resulting point is below the entry threshold, we have reached the bottom
of a ravine and the step is accepted; otherwise a full Newton step is
performed (in this way the $\check{\tau}$-ravine may be found in a
sequence of multiple Newton steps).

{\em Case 2:} If the current value of the criterion is smaller than the
entry threshold (or equal), we already are at the bottom of a
$\check{\tau}$-ravine. We therefore start the \textbf{zig phase},
proceeding along the ravine. An explicit line search along the Newton
step is performed as described above, but instead of returning the
minimum, it determines the point at the first $\alpha$ value in the
range $[0,1]$ where the criterion exceeds the escape threshold. If all
samples of the criterion stay below the escape threshold (or are equal),
the point corresponding to $\alpha=1$ is returned, ensuring maximum
progress.

Case 2 continues with the zag phase where we try to return to the bottom
of the ravine from the escape point. We first determine the pullback
direction (see section~\ref{sec_pullback}) at the escape point. Then the
\textbf{parallelity check} is applied. If the angle\footnote{Note that
the sign of the pullback direction is undefined; this is considered in
the computation of the angle.} between the zig and the zag (pullback)
direction is smaller than a threshold ($0.2\,\mbox{rad}$, approximately
$11\,\mbox{deg}$), there is a danger of protracted oscillations in the
ravine without making noticeable progress. Therefore, we perform a full
Newton step starting at the escape point of the zag phase.

Case 2 proceeds with the \textbf{zag phase} if the parallelity check
failed. The zag phase is implemented as a Golden Section search starting
with a tight bracket ($[-10^{-5}, 10^{-5}]$), maximal $100$ steps, and a
tolerance of $10^{-3}$. If that fails, we stay at the escape point,
otherwise the point found by the Golden Section search is the next
point.

Note that the zigzag strategy is ``state-free'', so in the iteration by
Newton's method, each new step checks the criterion at the current point
and then decides how to proceed, without considering what type of action
was taken in the last step. However, the zig and zag phases are part of
the same Newton step.

\textbf{Line-search method for zigzag strategy:} The line-search method
(nomenclature \texttt{Mlm} for ``local minimum'') is used in case 1 of
the zigzag strategy (\textbf{down phase}) to reach the bottom of a
$\check{\tau}$-ravine. This line-search method is specifically tuned to
the zigzag problem, particularly to the requirement of finding an often
tight valley in the $\check{\tau}$ criterion. The method first starts
with an explicit line-search with $100$ steps.\footnote{As the value of
$\check{\tau}$ can vary drastically over the search range, particularly
if Hessian singularies are passed, this appears to be the only feasible
way.} It determines all local minima, including a check for the last
point covered by the explicit search (which only has a single neighbor),
but excluding the first point.

Then a Golden Section search is performed at all local minima to refine
the result of the explicit search by trying to descend deeper into
possible ravines. The starting point of each refinement search is the
coarse local minimum; the search direction and range is defined by the
Newton step. Results where the absolute value of the refined $\alpha$
exceeds $10^{-1}$ are rejected (as this deviates too much from the
coarse minimum point).\footnote{Here a value of $10^{-2}$ would actually
fit better to the $100$ steps of size $10^{-2}$ of the explicit line
search. The larger value $10^{-1}$ may however be beneficial if the
refinement starts from the last point in the explicit search; in this
case the Newton step could be extended by $10$\%.} Also steps running
into a negative overall $\alpha$ (coarse and fine search combined) are
rejected to prevent backward movements. This results in a (possibly
reduced) list of refined local minima. Finally the first refined local
minimum (that with the smallest $\alpha$) smaller than the entry
threshold is accepted --- this leads to the bottom of the nearest
$\check{\tau}$-ravine. If no sub-threshold local minimum exists, the
deepest refined local minimum is returned. If all refinement steps fail,
the best result of the explicit search (the absolute minimum) is
returned.

Note that the descriptions above omit the handling of cases where
computations fail, e.g. due to a singular Hessian (please consult the
code for details).

\textbf{Strategy string}: Since the zigzag strategy
(\texttt{Szz}/\texttt{Szzp}) is relatively complex, each invocation
returns a short identifier which describes the actions taken. These
identifiers are concatenated by the implementation of the Newton method
to a ``strategy string''; this string is shown in some of the diagrams
below. In the \textbf{down phase}, the identifier is either \texttt{D}
(only coarse search) or \texttt{D-} (coarse search followed by fine
search), if the search ended at the bottom of a $\check{\tau}$-ravine.
If not, a full step is taken, identified by \texttt{F}. The \textbf{zig
phase} delivers the identifier \verb@^@ if the escape threshold was
exceeded, otherwise \texttt{A}. If the computation of the pullback
direction failed, the zig phase returns identifier \texttt{U} instead.
If the parallelity check was raised, the identifier \texttt{P} is
returned instead. If the \textbf{zag phase} was successful, identifier
\texttt{v} is appended to the zig identifier. So, for a successful
zigzag movement where the escape threshold was reached, the combined
identifier would be \verb@^v@, otherwise \texttt{Av}. An example of an
entire strategy string would be \verb@D-^v^v^v^v^vAvAvAvAv@. For the
strategy \texttt{Sno}, the identifier is always \texttt{N} (for ``Newton
step'').

\subsubsection{Plot Description}\label{sec_twod_newton_ls_plot}

%
\textbf{Trajectories}: Trajectories are drawn into the plots described
in section~\ref{sec_twod_sing_crit_plot}, see e.g.
\textbf{Figure~\ref{fig_newton_rsnbrk_no-no}}, top left, for an example.
The version of Newton's method used for generating the plot is added in
a second header line. For the experiments, around $10$ starting points
for Newton's method were chosen manually, attempting to cover all
regions in the plot (sometimes excluding regions where Newton's method
can be expected to diverge in all tested versions). Each trajectory
appears with lines and symbols in an individual color. The starting
point is marked by an open circle, all intermediate points by a filled
circle, and the final point indicates either a failure (cross symbol,
length of final gradient vector above $10^{-5}$), a minimum (downward
triangle), maximum (upward triangle), or saddle point (diamond); note
that for the successful runs, the final symbol for one of the
trajectories usually hides that of all others. The type of the final
point is determined from the signs of the eigenvalues of the final
Hessian. At the bottom right corner of the plot, the number of
successful runs in relation to the total number of runs is displayed
(however, this should not be misinterpreted as a measure that could be
used to compare the different methods, particularly due to the small
number of trajectories with manually chosen starting points).

\textbf{Criterion-over-$\alpha$ plots}: A special type of diagram was
designed to visualize the effect of the strategy steps on the criterion
for a single trajectory, see e.g.
\textbf{Figure~\ref{fig_newton_rsnbrk_no-ex}}, top right. The test
function is indicated in the first header line together with the number
and color of the trajectory in the related trajectory plot. The second
header line specifies the method. In one of the corners of the diagram,
the strategy string described in section~\ref{sec_twod_newton_ls_impl}
is shown (see e.g. \textbf{Figure~\ref{fig_newton_rsnbrk_zzp}}, top
right).

In this diagram, the progress of the method is indicated on the
horizontal axis by {\em concatenating} the $\alpha$ values of each step.
Each section of the curve corresponds to a sub-step of the method, e.g.
in the zigzag strategy, the coarse search of the down phase, the
refinement search of the coarse phase, the zig phase, or the zag phase
are shown as separate sections. Each section is shown in a different
color (though the sequence of colors\footnote{blue, orange, green, red,
purple, brown, pink, gray, olive, cyan} repeats after 10 steps), with
the ``best'' $\alpha$ value returned from the sub-step marked by a black
dot; at this point, the next section starts. If such a sub-step has a
length smaller than $1$ we see that is was damped (shortened) in
comparison to the original Newton step, so the line search had an
effect. The logarithmic vertical axis relates to the criterion used in
the method, as specified in the header (around the final steps, effects
which are probably due to the limits of the floating point
representation may result in ragged or noisy sections).

There are two special considerations for the zag phase in the zigzag
strategy: The step vector used for the Golden Section search is a unit
vector (a pullback direction, see section~\ref{sec_pullback}) multiplied
by the Euclidean length of the Newton step (of the zig phase) in order
to give the same scale to the $\alpha$ values of both zig and zag phase.
Moreover, since the sign of the pullback directions is undefined, the
sign of the $\alpha$ value returned from the zag phase is inverted if it
was negative, so $\alpha$ always increases in these diagrams (the method
itself is not affected).

Entry and escape threshold are visualized as dashed blue lines for
methods where these thresholds are relevant.

\subsubsection{Experiments}\label{sec_twod_newton_ls_exp}

%
\textbf{Rosenbrock-wide}: \textbf{Figure~\ref{fig_newton_rsnbrk_no-no}}
visualizes the results for an application of the \textbf{plain Newton
method}. In the top left diagram it is visible that Newton's method
converges from all chosen starting points, even without line search.
Most trajectories approach the valley in the first step, but then
perform a long jump out before returning into the valley and approaching
the minimum in a few more steps. These outward jumps may be long due to
the vicinity of the singularity (but the jumps into the valley are also
long, so this argument may not hold).

As can be seen in the top right diagram for the black trajectory, the
method converges in a small number of steps. The bottom left diagram
shows the final steps of all trajectories in the vinicity of the minimum
point $(1,1)$. The bottom right diagram visualizes the behavior of the
method around point $(0,0)$; here we also see that the countercurrent
singularity is crossed. Note that one of the trajectories (dark cyan)
starts at $(-10,0)$ and apparently performs a huge jump outside the
figure range before approaching the minimum from below.
\begin{figure}[t]
\begin{center}
\includegraphics[height=0.47\textwidth]{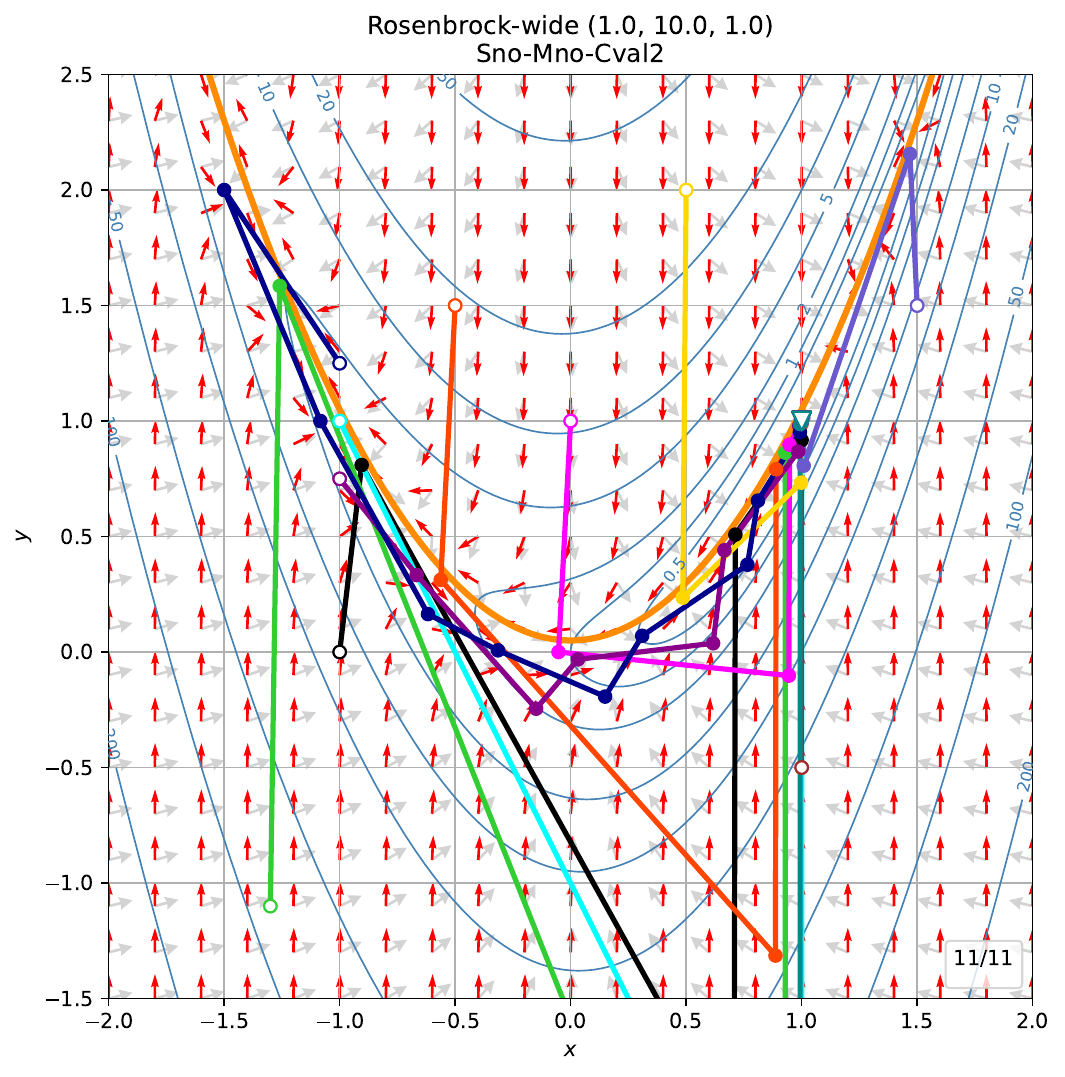}
\hspace*{1mm}
\includegraphics[height=0.47\textwidth]{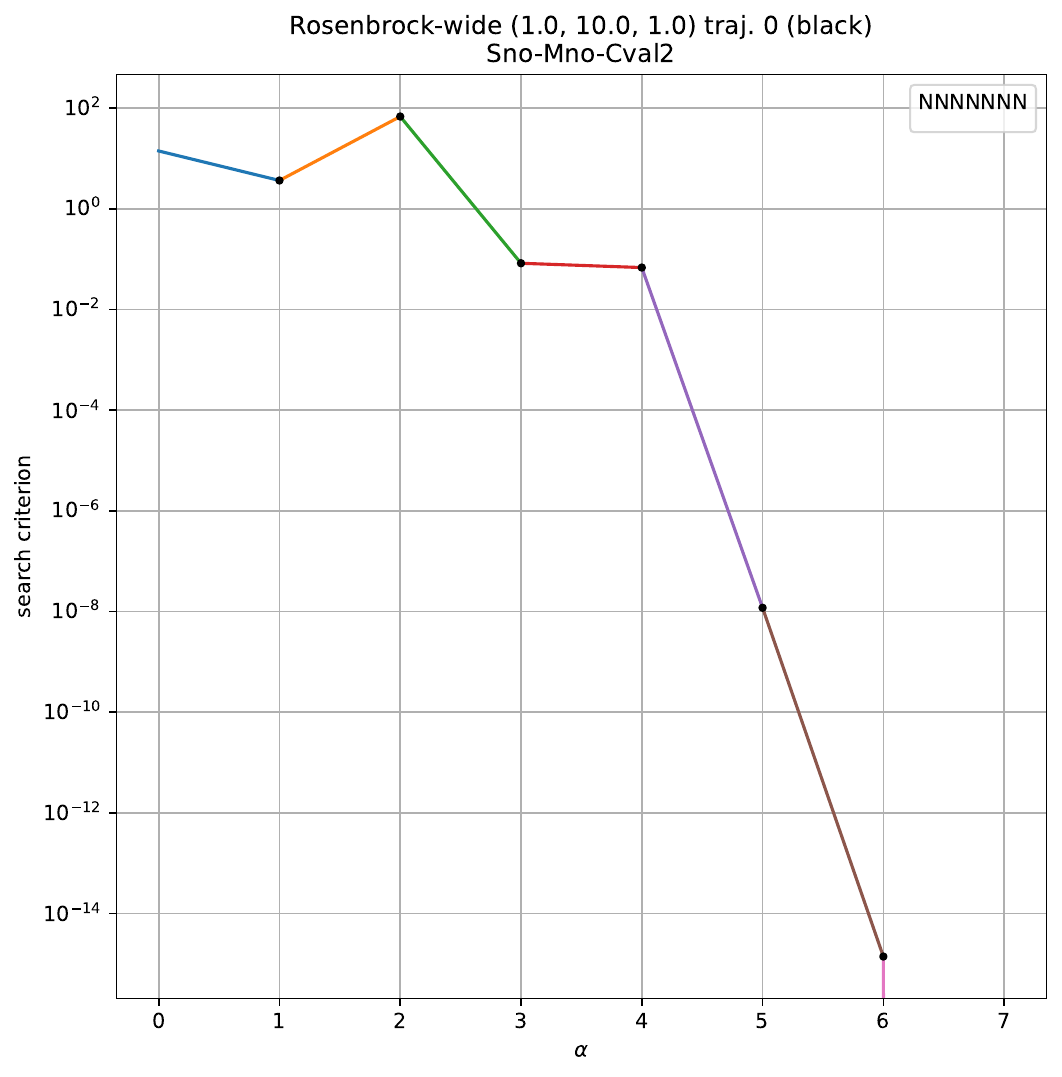}\\
\includegraphics[height=0.47\textwidth]{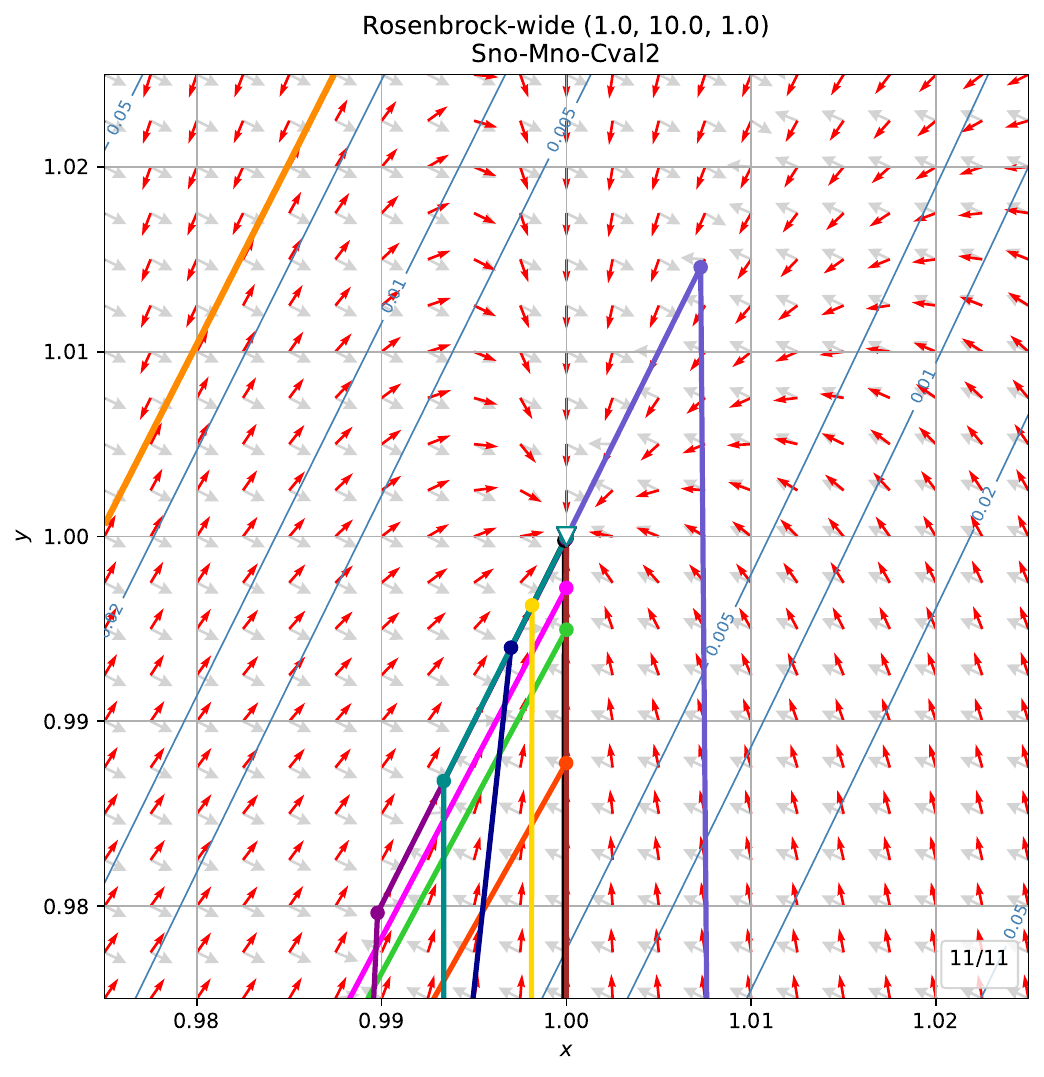}
\hspace*{1mm}
\includegraphics[height=0.47\textwidth]{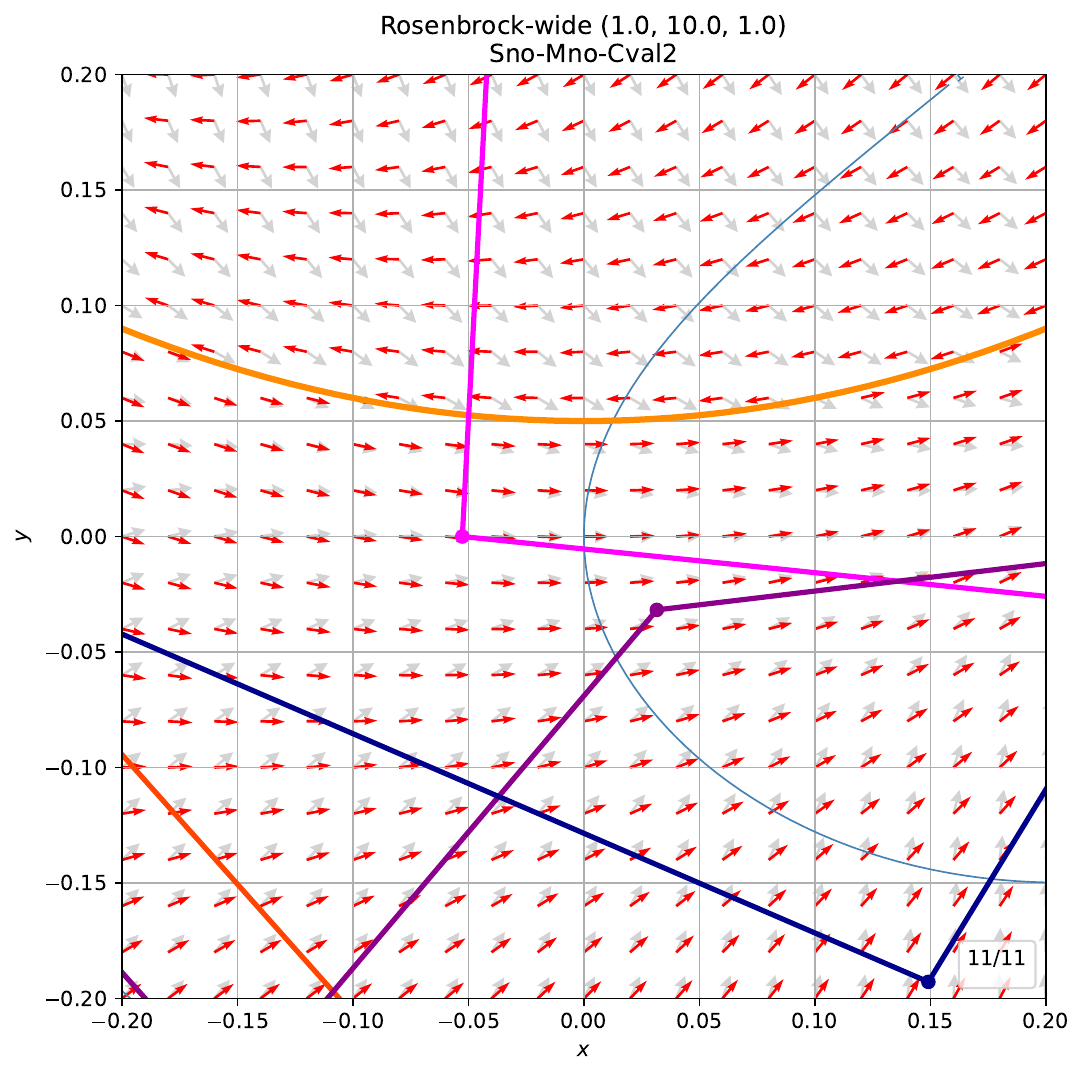}\\      
\caption{}
\label{fig_newton_rsnbrk_no-no}
\end{center}
\end{figure}
\textbf{Figure~\ref{fig_newton_rsnbrk_no-ex}} visualizes the results
when \textbf{Newton's method with a value-based line search} is applied.
We see in the top left diagram that all trajectories (except one) enter
the valley with the first step and then approach the minimum while
staying in the vicinity of the valley bottom. Only the blue trajectory
--- which starts close to the countercurrent singularity --- immediately
gets stuck (indicating that the line search found the best value for
$\alpha = 0$). The dark cyan trajectory starting at $(-10,0)$ has
presumably followed the valley over a long distance before entering the
figure range at the top left.

The diagram at the top right reveals that, for the black trajectory,
there are only few steps where the Newton step is damped. This is
somewhat surprising as the trajectories neatly follow the valley. An
explanation can be extracted from the bottom right diagram: most
trajectories seem to run along one wall of the valley, in some distance
from the valley bottom (which passes through $(0,0)$). At this distance,
the Newton steps seem to have the right length such that they end in a
point from where the next Newton step can also be undamped. In the
vicinity of the minimum, where the valley is relatively straight, most
trajectories run close to the valley bottom (bottom right diagram) and
seem to have the proper length as well (which can be expected close to
the minimum).
\begin{figure}[t]
\begin{center}
\includegraphics[height=0.47\textwidth]{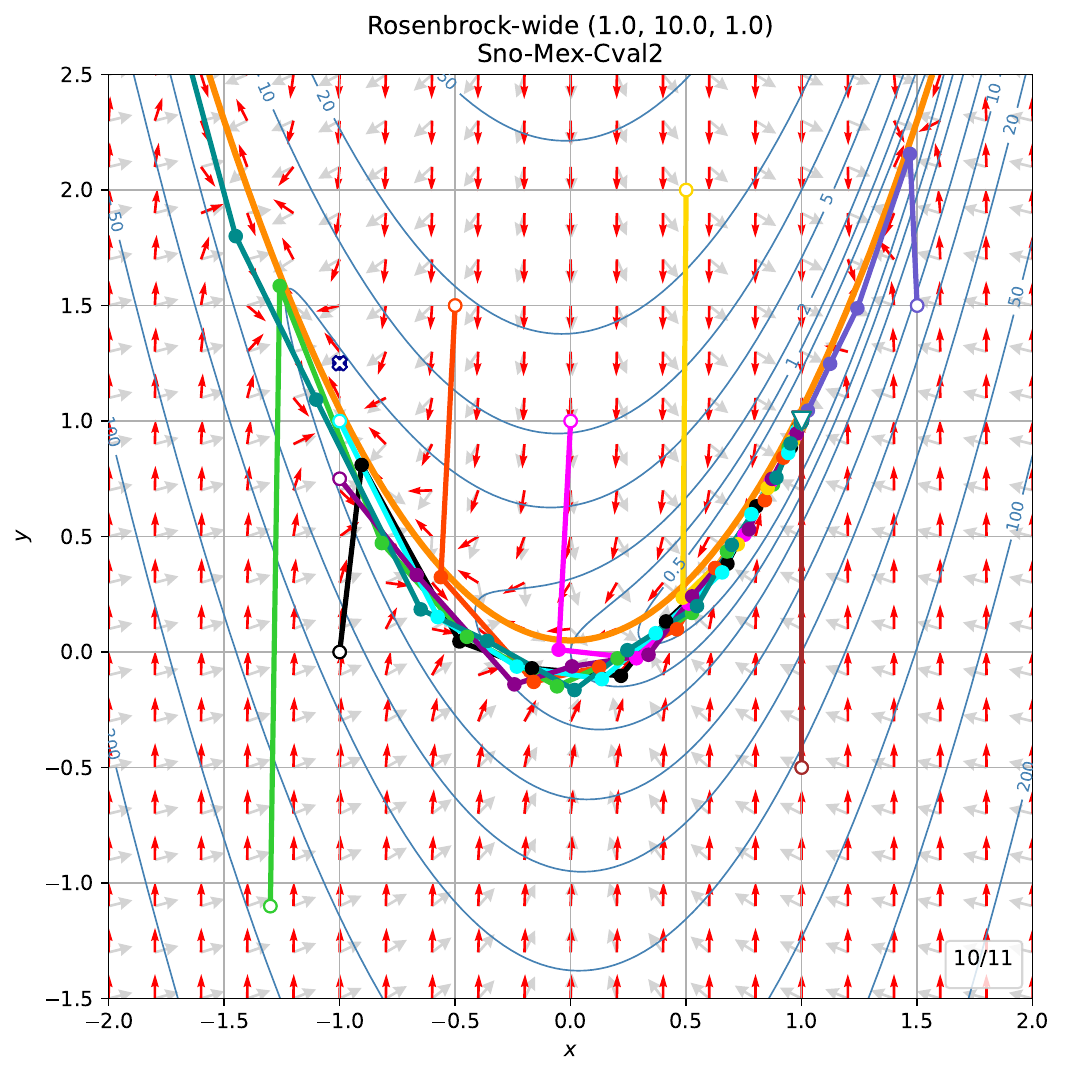}
\hspace*{1mm}
\includegraphics[height=0.47\textwidth]{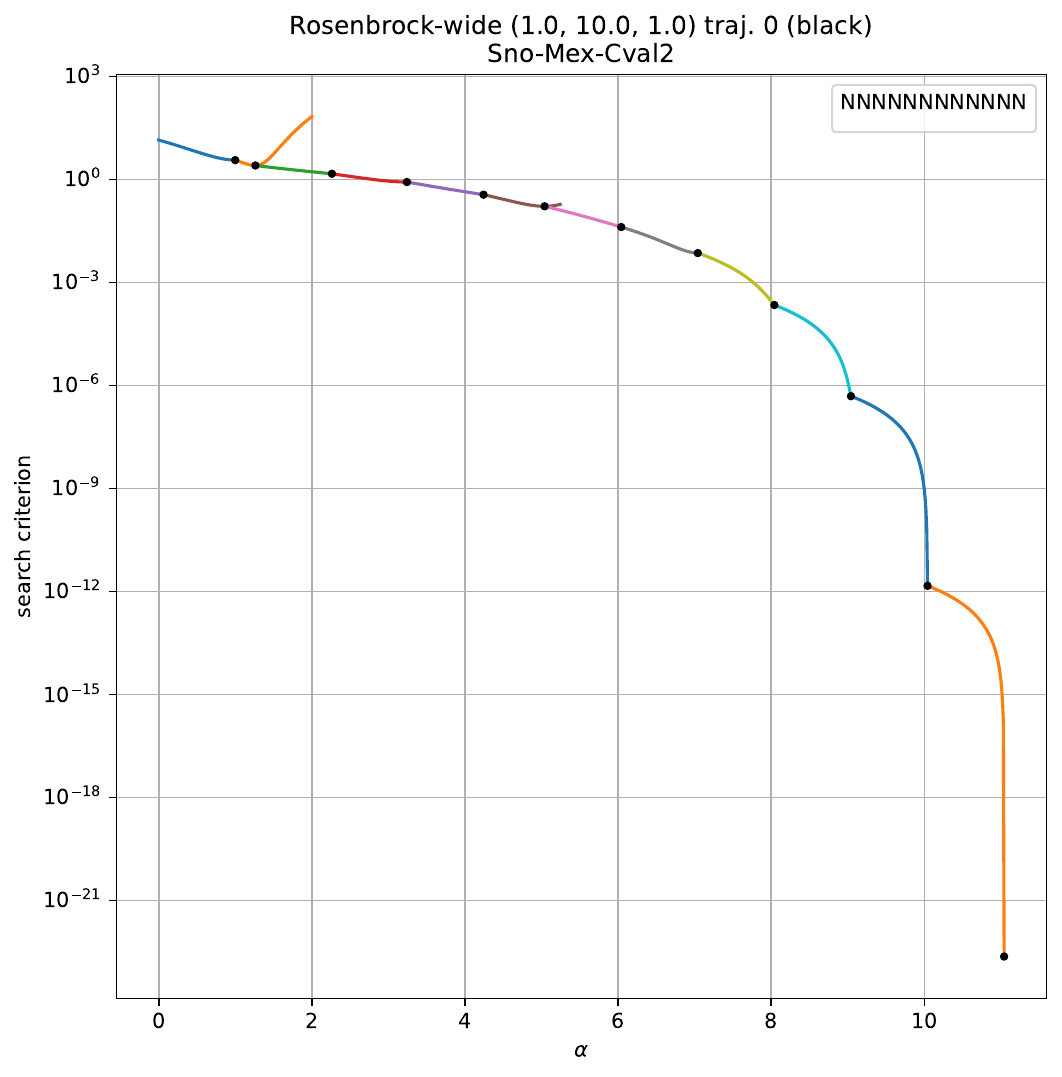}\\
\includegraphics[height=0.47\textwidth]{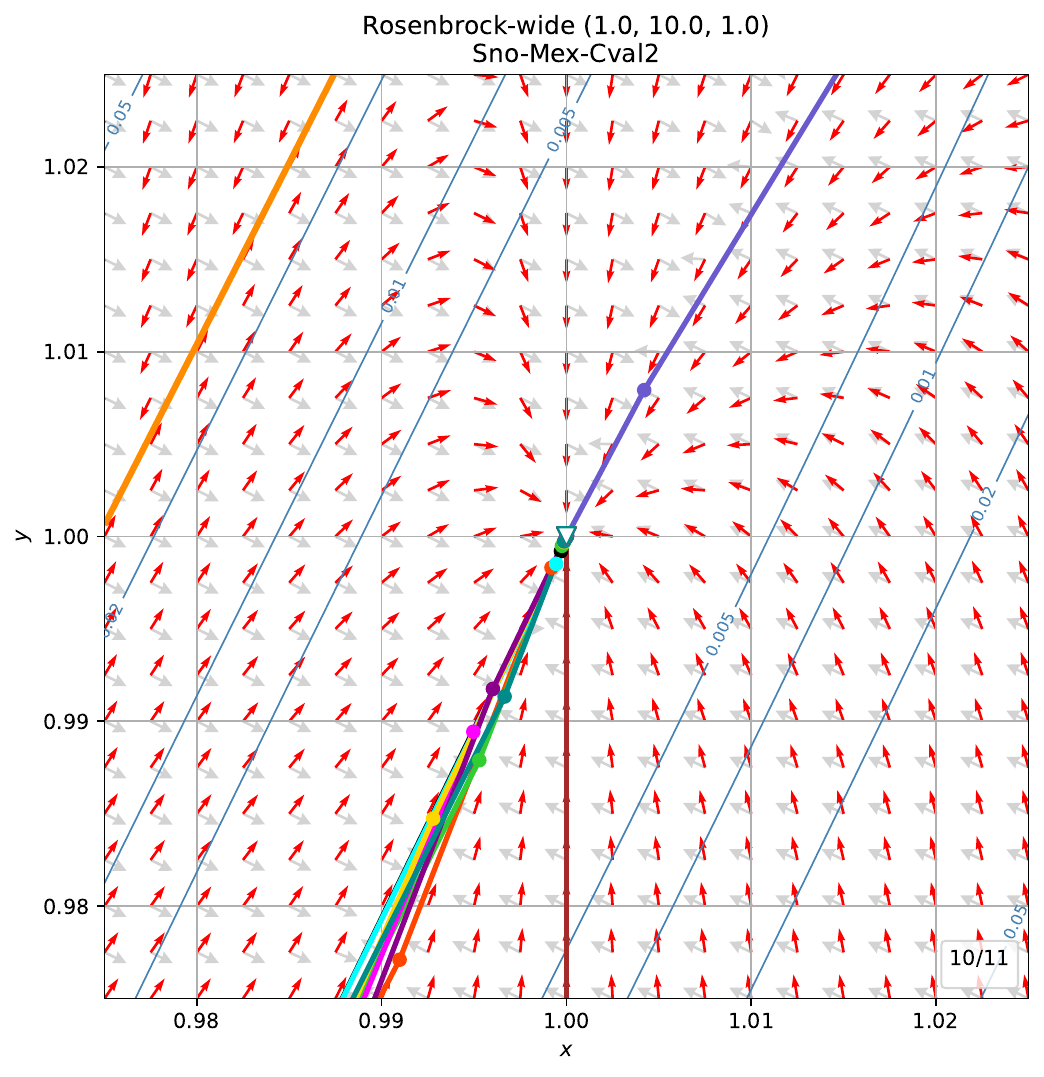}
\hspace*{1mm}
\includegraphics[height=0.47\textwidth]{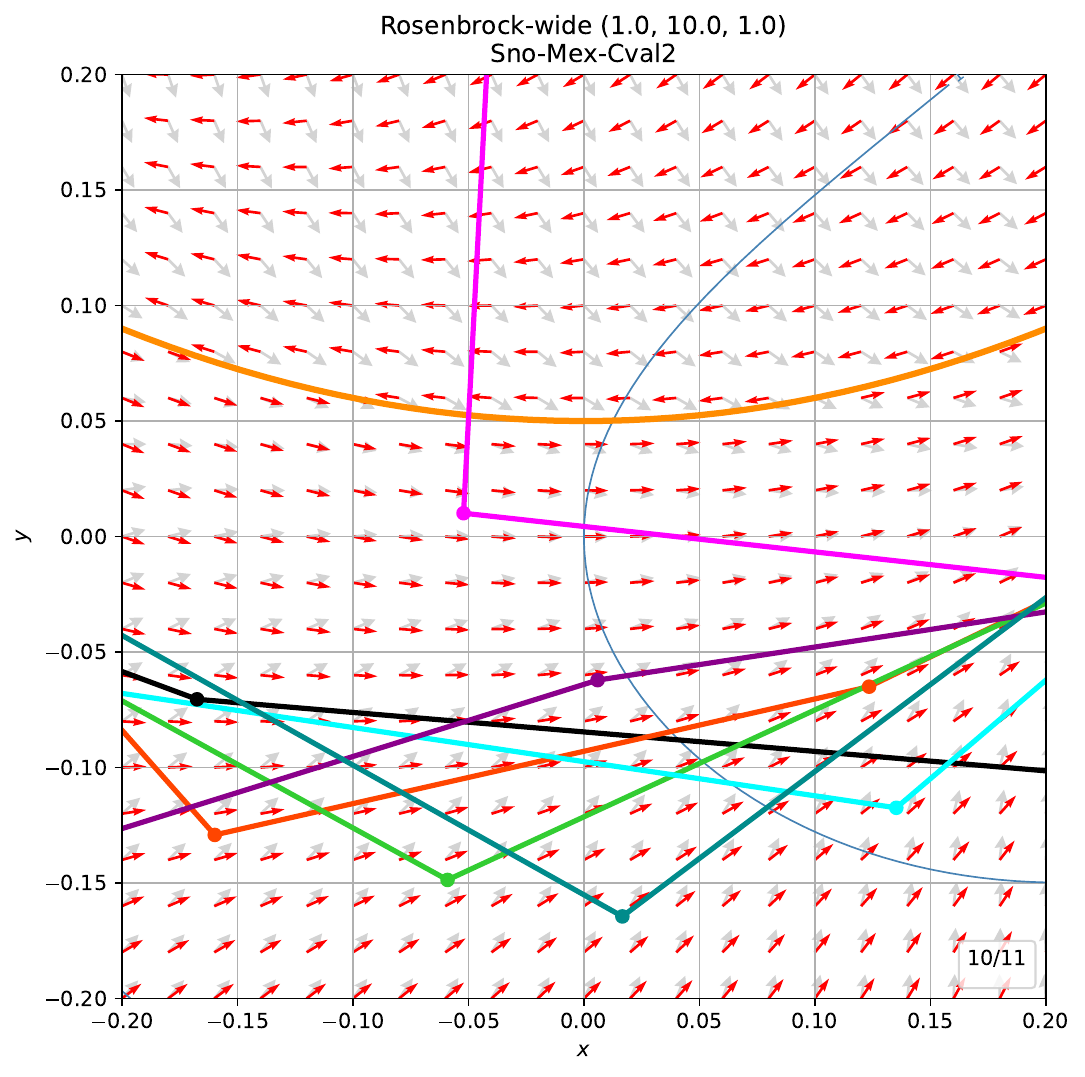}\\      
\caption{}
\label{fig_newton_rsnbrk_no-ex}
\end{center}
\end{figure}
The results for the \textbf{zigzag strategy} are shown in
\textbf{Figure~\ref{fig_newton_rsnbrk_zzp}}. As can be seen in the top
left diagram, most trajectories approach the valley bottom in the down
phase and then zigzag along the $\check{\tau}$ ravine. As the top right
diagram shows for the black trajectory, the down phase in the first
sub-step apparently performs a full Newton step (using explicit search),
and then, starting from the end point, descends deeper into the ravine
in the refinement (using Golden Section). A number of zigzag movements
follow where the step is damped. As soon as the zag phases don't reach
the escape threshold any longer, the steps become undamped.
\begin{figure}[t]
\begin{center}
\includegraphics[height=0.47\textwidth]{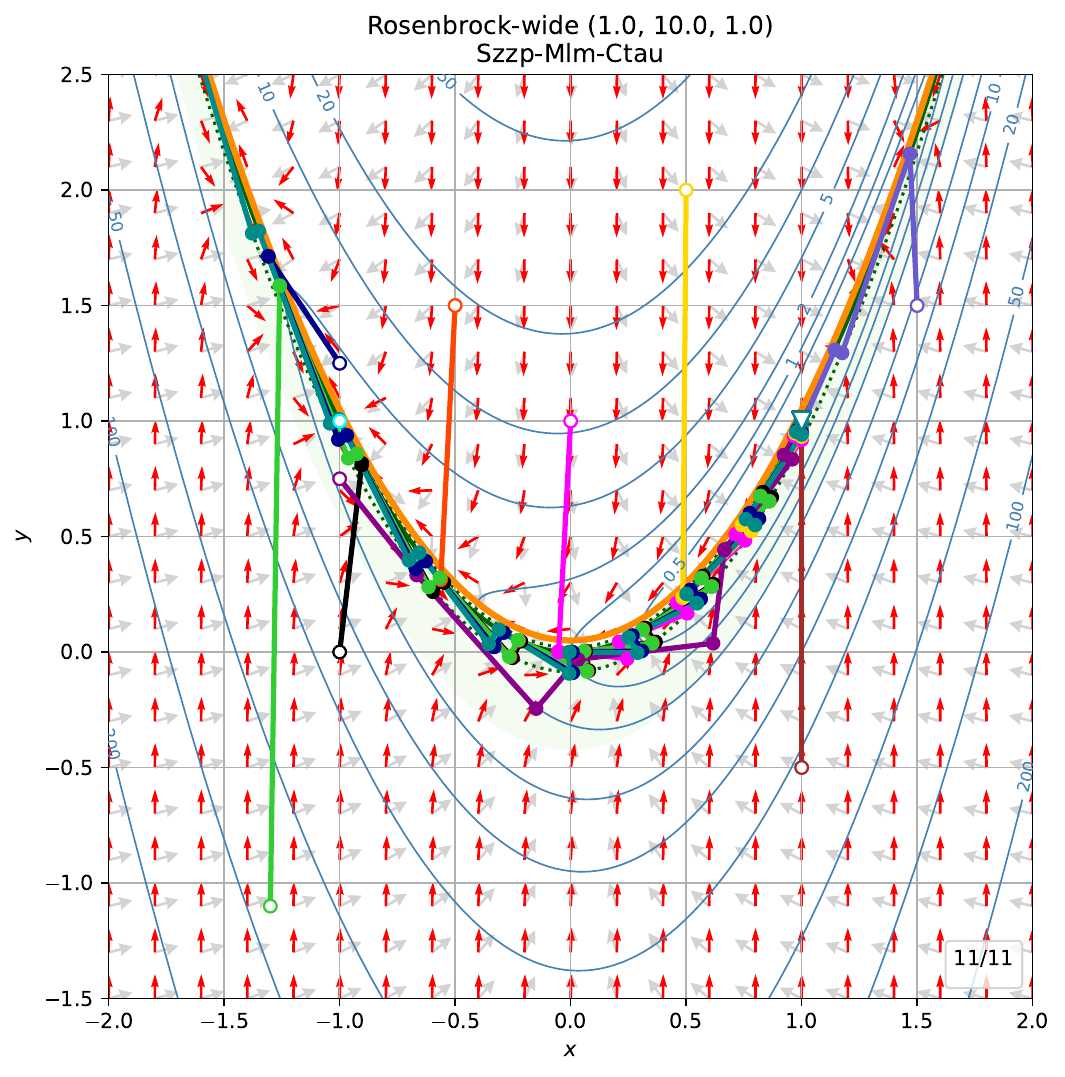}
\hspace*{1mm}
\includegraphics[height=0.47\textwidth]{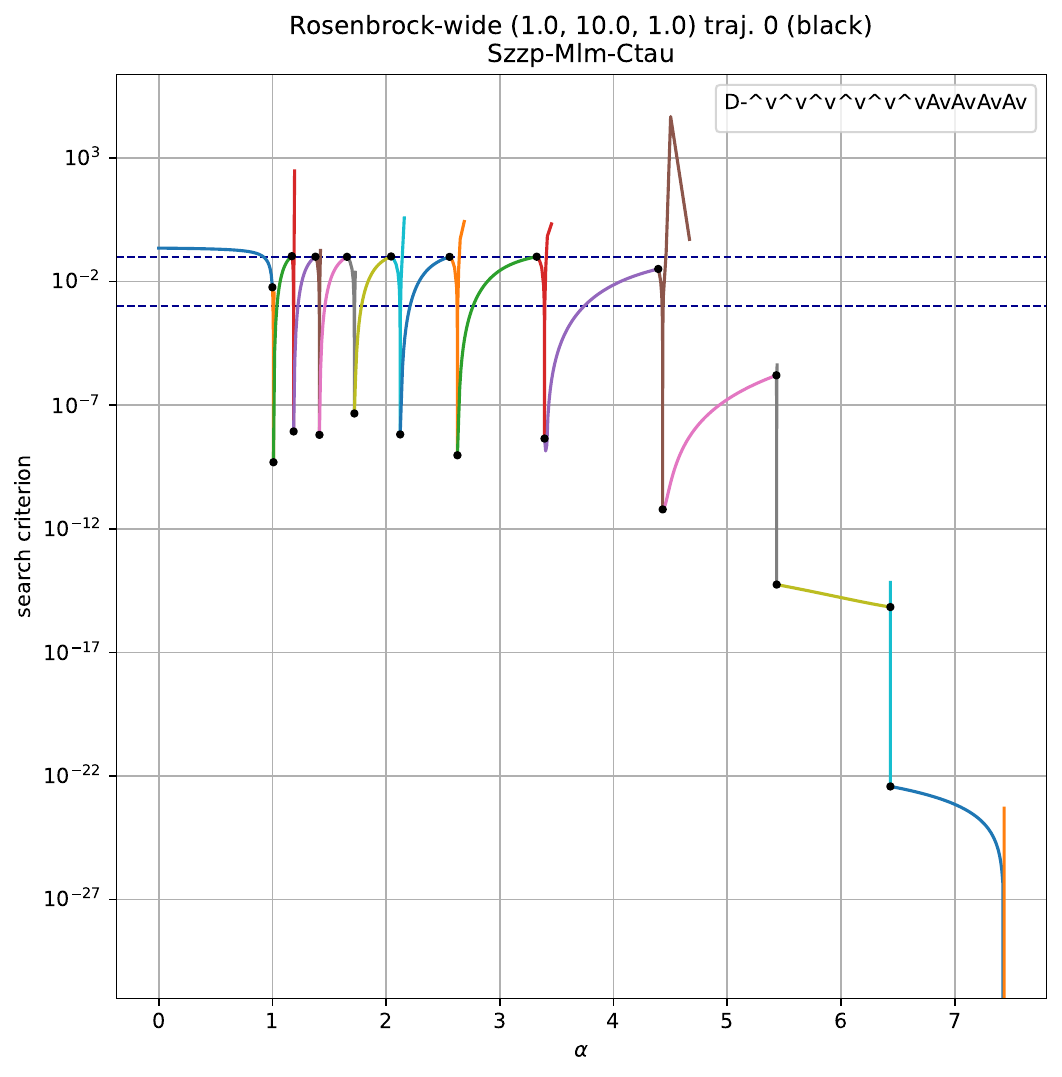}\\
\includegraphics[height=0.47\textwidth]{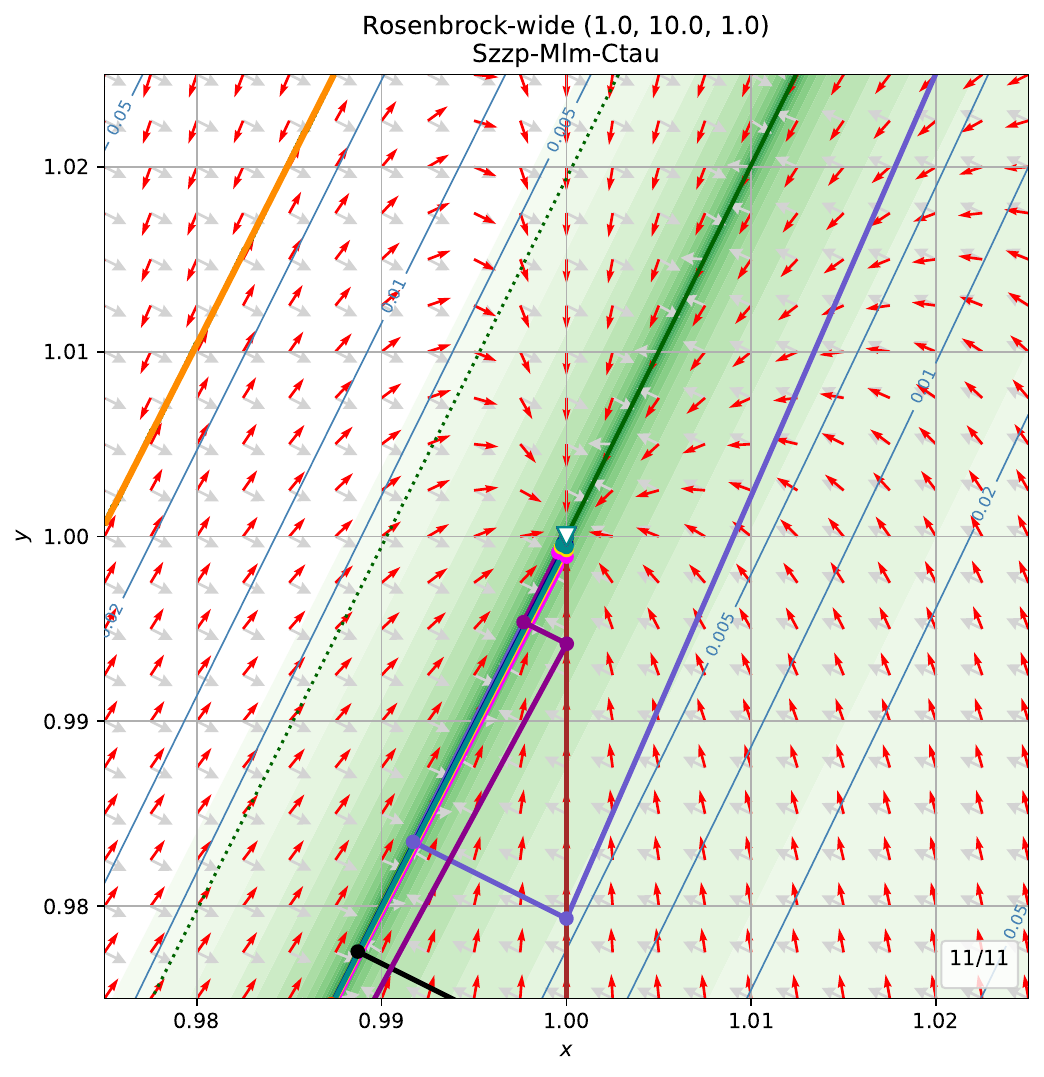}
\hspace*{1mm}
\includegraphics[height=0.47\textwidth]{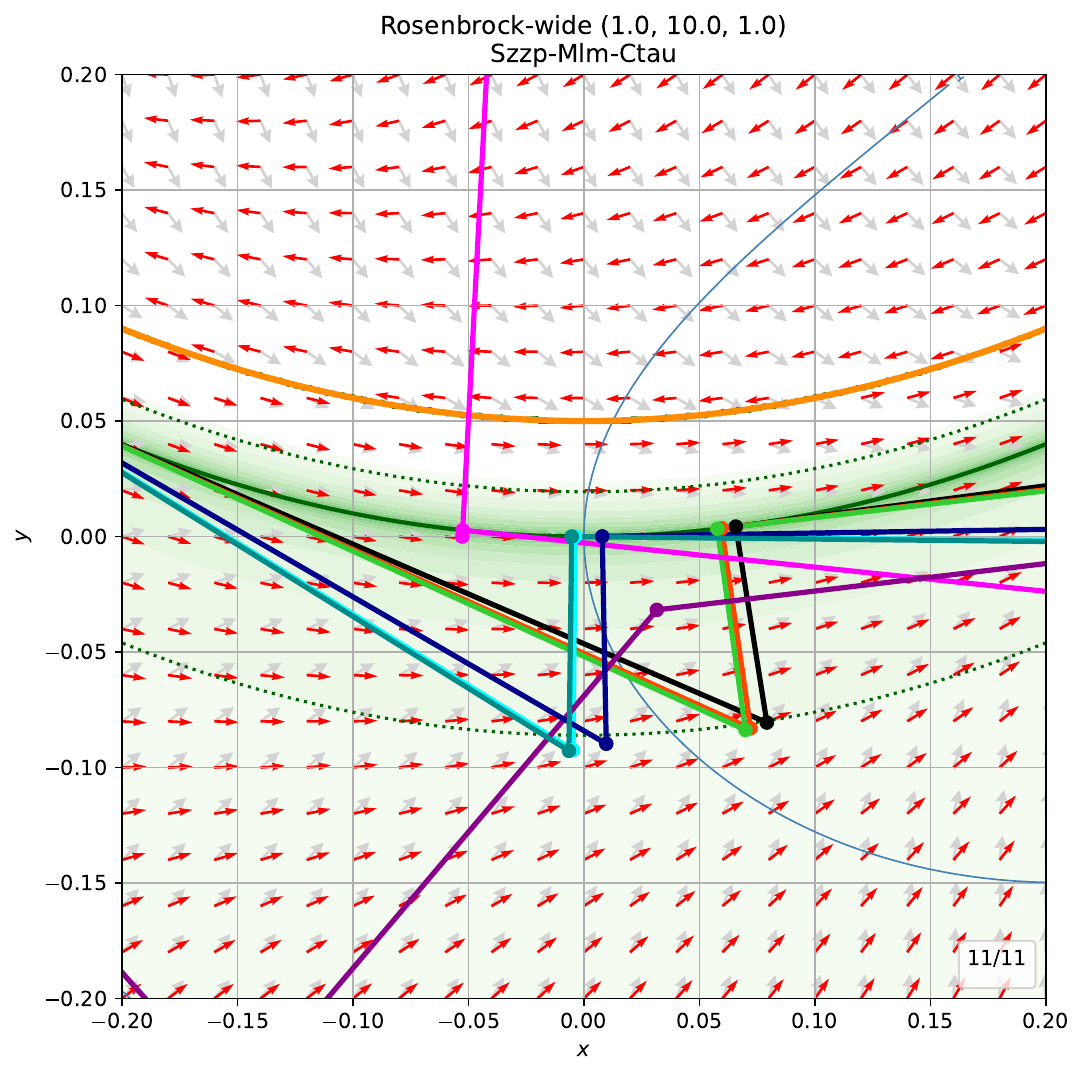}\\      
\caption{}
\label{fig_newton_rsnbrk_zzp}
\end{center}
\end{figure}
In the bottom right diagram it can be seen how the method zigzags
between dotted line belonging to the escape threshold (end point of the
zig phase) and the dark green line representing the ravine (end point of
the zag phase). This behavior differs from the one observed for Newton's
method with value-based line search. Also in the approach to the minimum
(bottom left diagram) we see differences in the behavior between these
two methods. Returning to the top left diagram we see that methods
starting in the vinicity of the countercurrent singularity exhibit a
somewhat different behavior, running ``backwards'' (dark blue) and
staying in larger distance from the ravine (dark magenta).
\textbf{Rosenbrock-wide-saddle}: This is the decisive experiment
demonstrating that the zigzag strategy suggested in this work can also
approach saddle points with damped steps. For comparison,
\textbf{Figure~\ref{fig_newton_rsnbrk_saddle_no}} shows that the
\textbf{plain Newton method} without line search doesn't have a problem
to approach the saddle point (albeit through large jumps; left diagram),
whereas \textbf{Newton's method with explicit line search} on the value
of the objective function fails from all starting points (right
diagram).
\begin{figure}[tp]
\begin{center}
\includegraphics[height=0.47\textwidth]{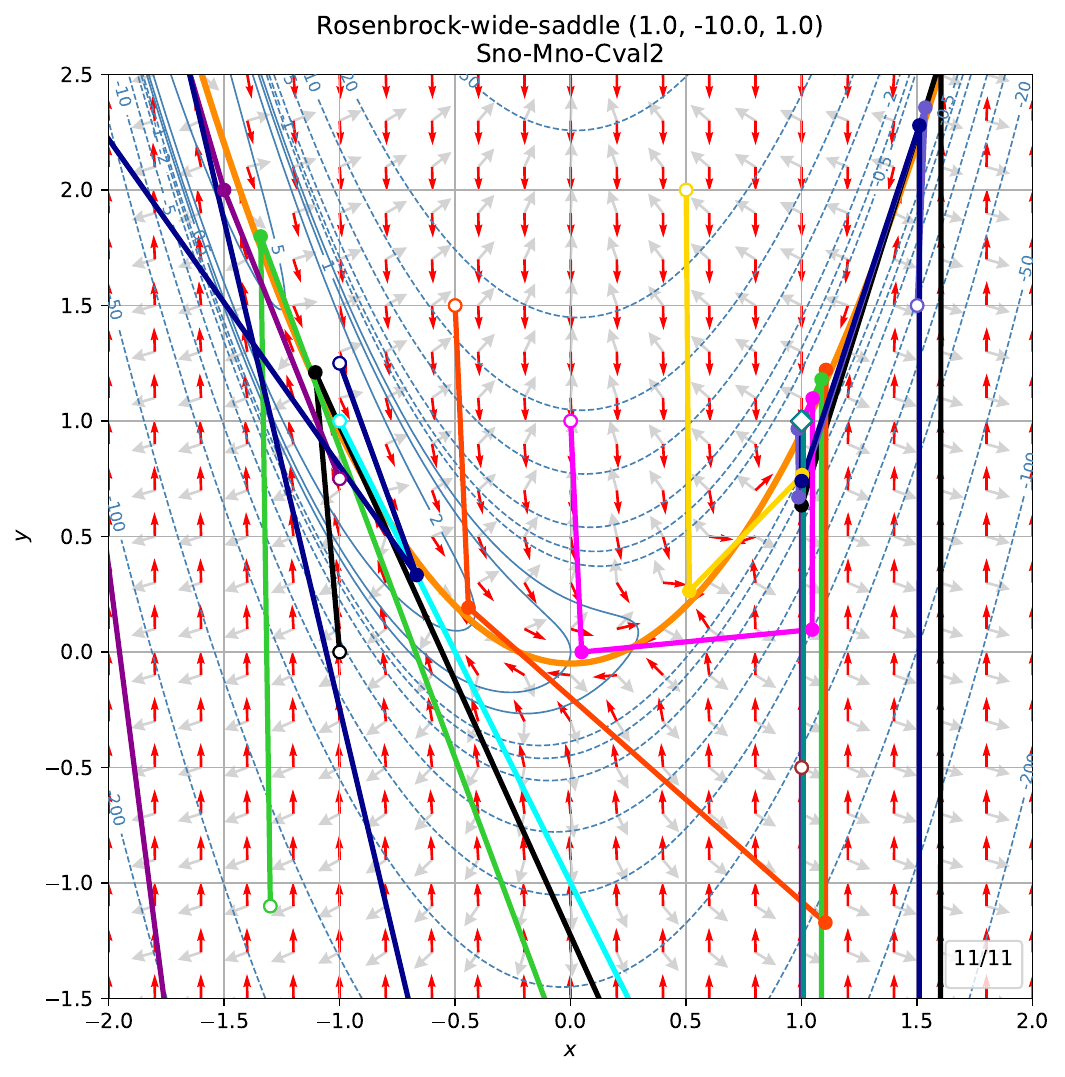}
\hspace*{1mm}
\includegraphics[height=0.47\textwidth]{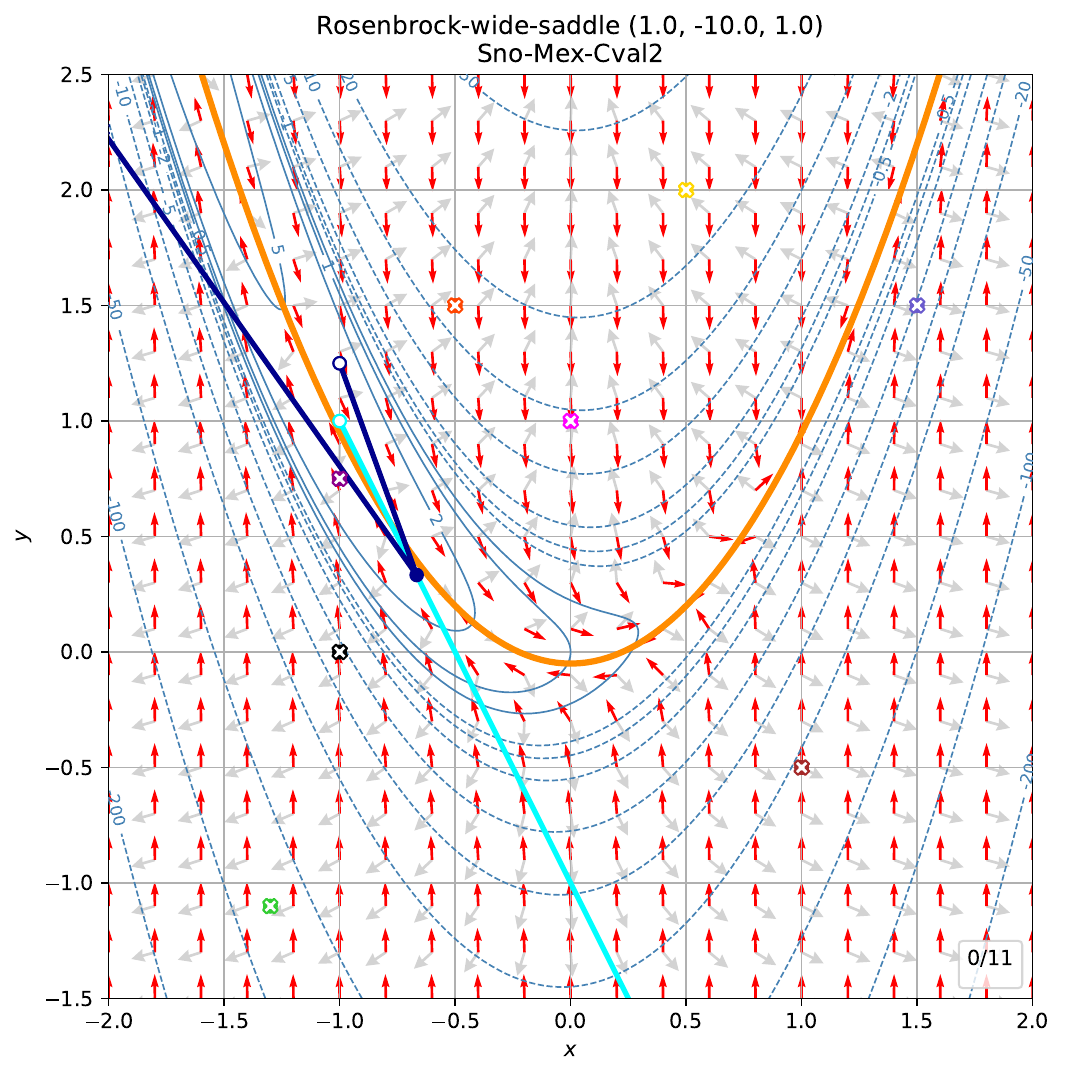}
\caption{}
\label{fig_newton_rsnbrk_saddle_no}
\end{center}
\end{figure}
The results for the \textbf{zigzag method} on the saddle-point problem
are shown in figure \textbf{Figure~\ref{fig_newton_rsnbrk_saddle_zzp}}.
In the top left diagram it is visible that all trajectories run along
the ``valley'' bottom and successfully reach the saddle point, but
perform relatively short steps. The dark magenta trajectory starting
close to the countercurrent singularity at $(-1,0.25)$, however, first
runs in the opposite direction, but seems to channel into the ``valley''
somewhere outside the figure range.

The top right diagram visualizes the criterion $\check{\tau}$ for the
black trajectory. The trajectory starts with a descent into the ravine,
then a number of short zigzag steps follow where the escape threshold is
reached; these steps are damped. At the end of the trajectory there are
a few zigzag steps where the escape threshold is not reached and which
therefore are undamped.

In the bottom right diagram we see the tight zigzags between the dotted
curve (escape threshold) and the ravine. Compared to the minimum version
of Rosenbrock's function, the distance between the escape-threshold
curve and the ravine is smaller, leading to shorter steps. The approach
to the saddle point (bottom right diagram) is similar in both versions
of Rosenbrock's function.
\begin{figure}[t]
\begin{center}
\includegraphics[height=0.47\textwidth]{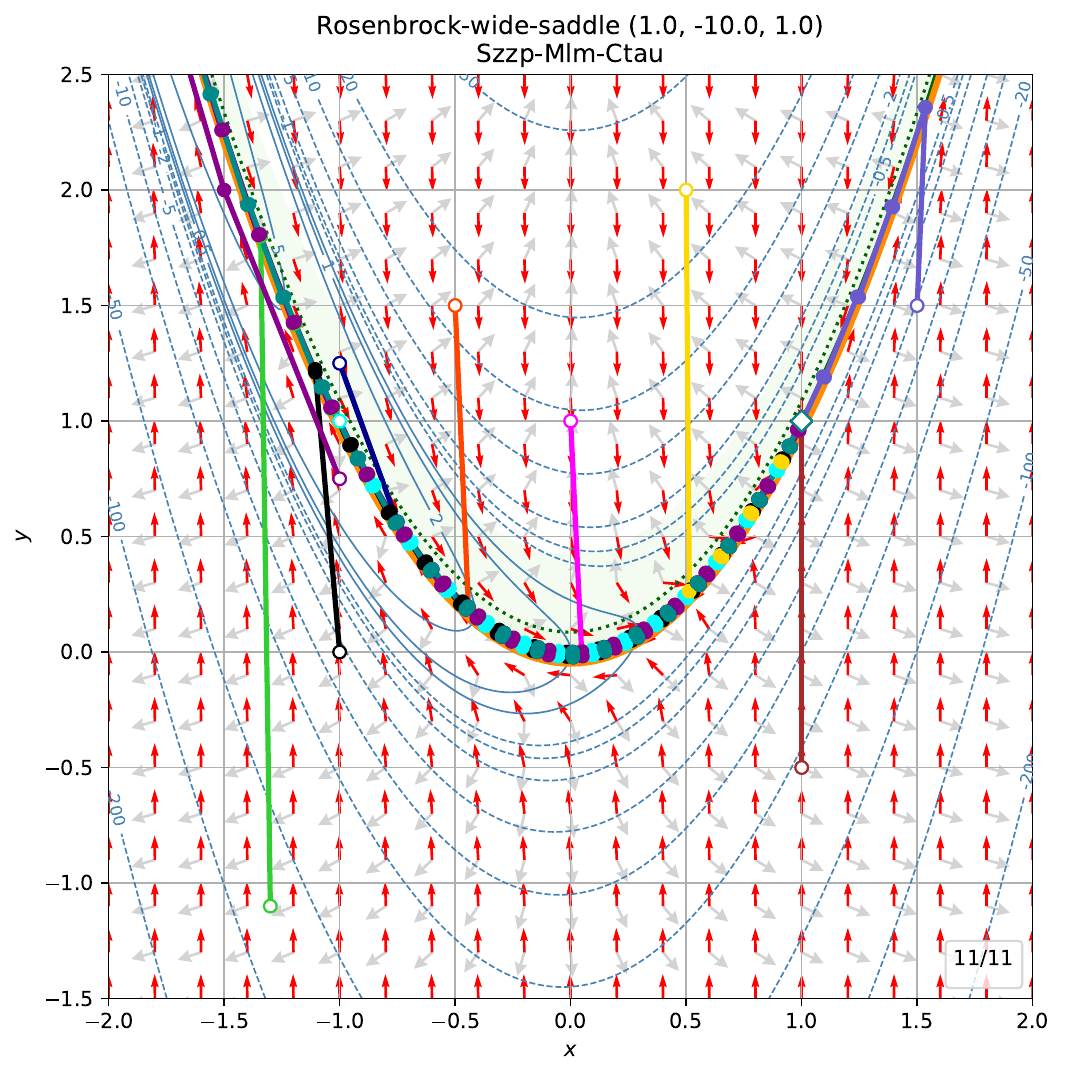}
\hspace*{1mm}
\includegraphics[height=0.47\textwidth]{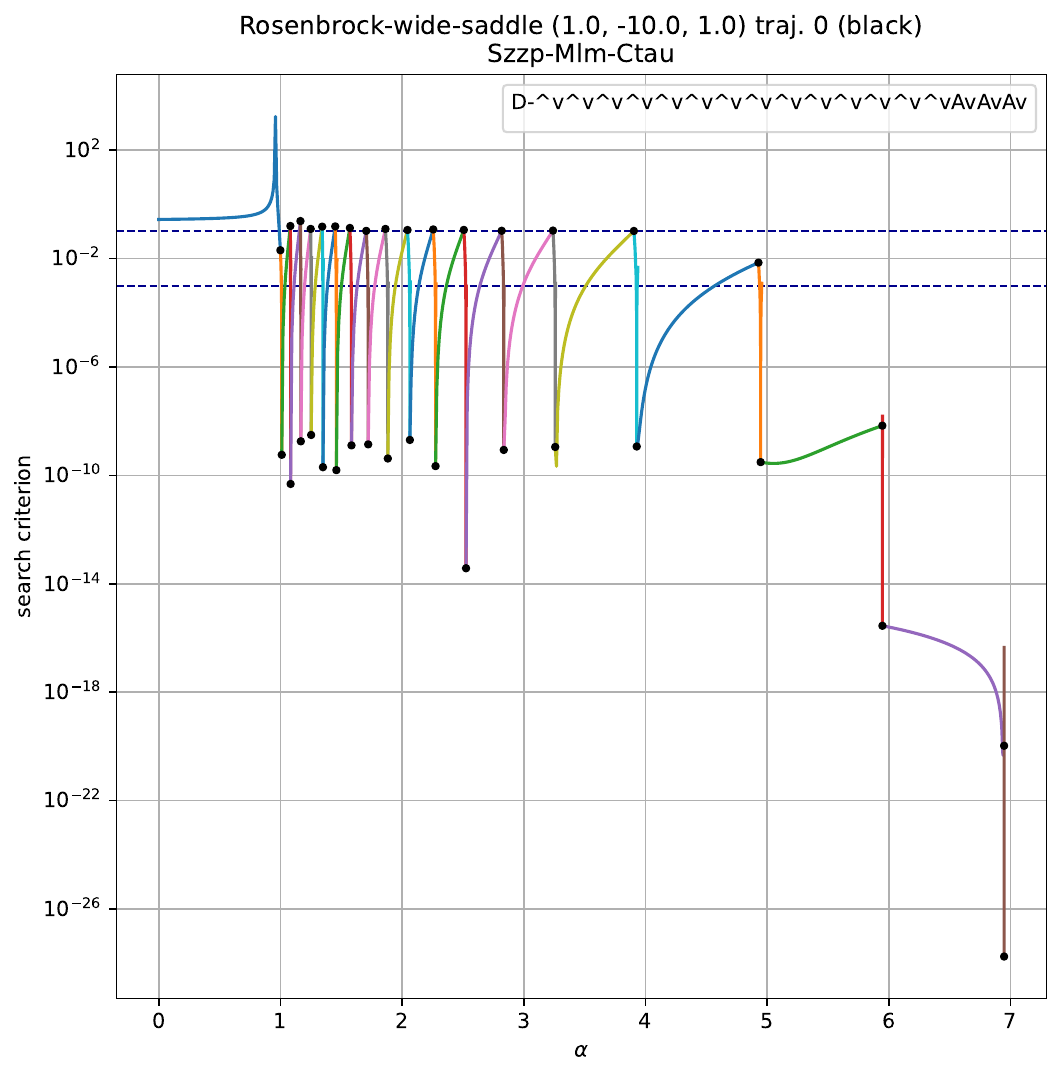}\\
\includegraphics[height=0.47\textwidth]{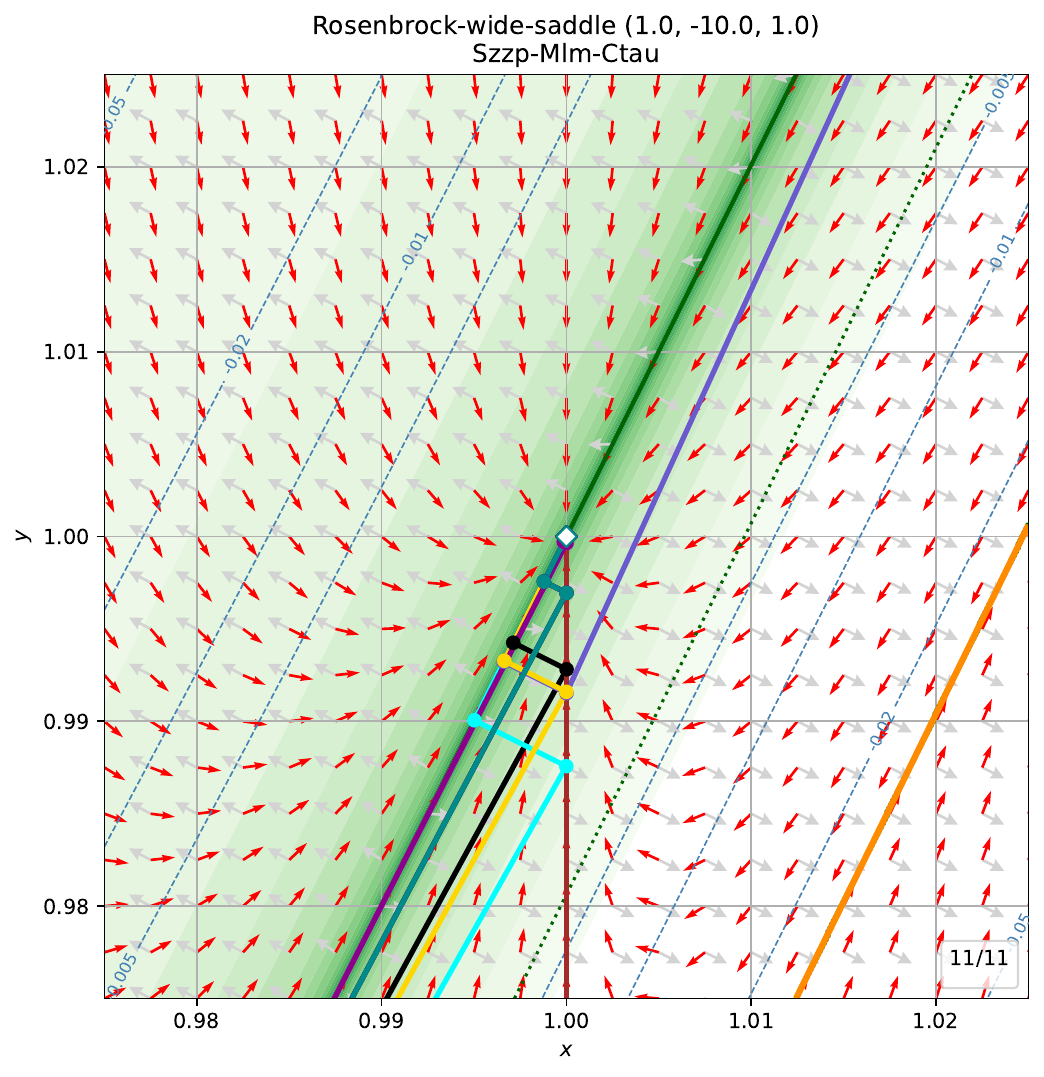}
\hspace*{1mm}
\includegraphics[height=0.47\textwidth]{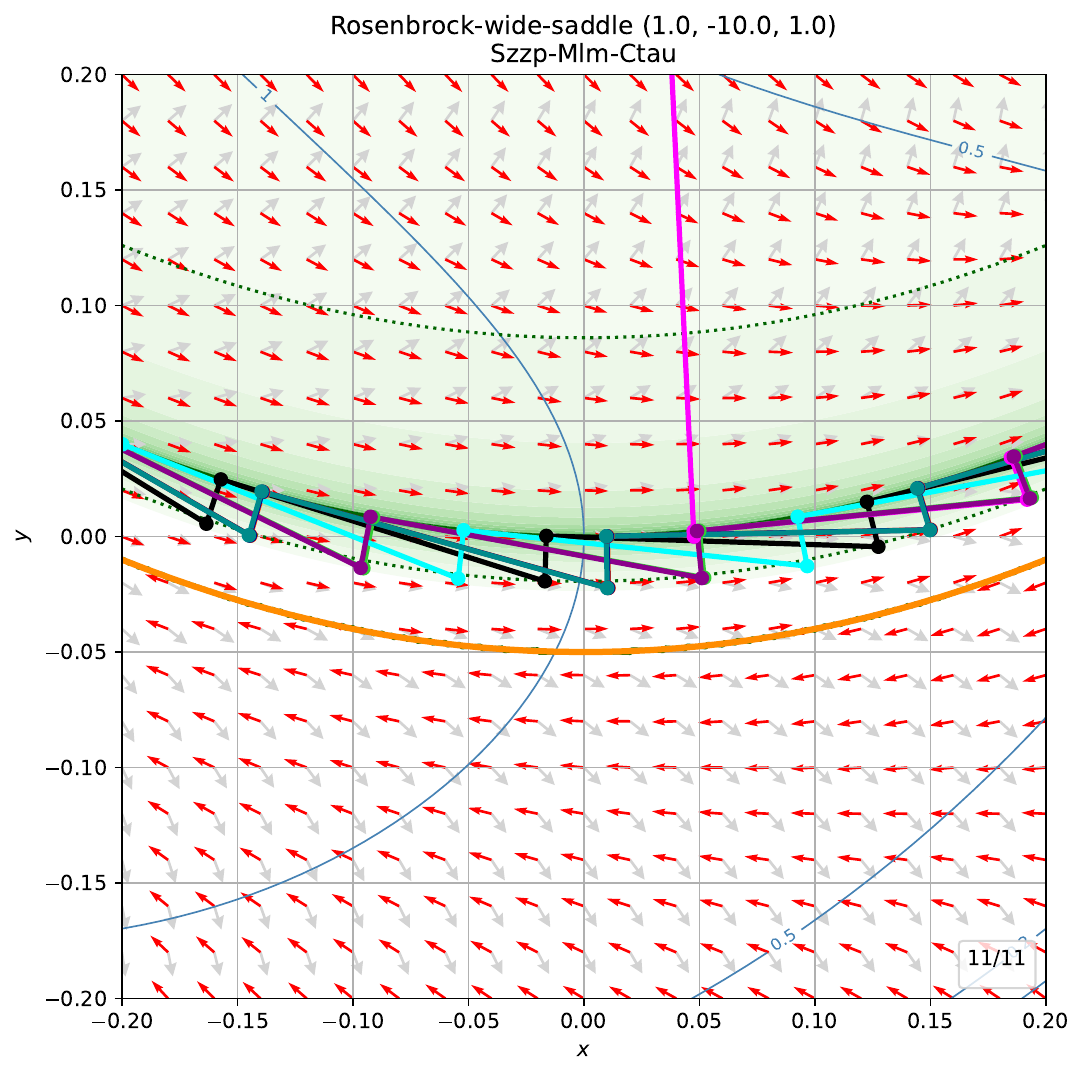}\\      
\caption{}
\label{fig_newton_rsnbrk_saddle_zzp}
\end{center}
\end{figure}
\textbf{Himmelblau}: In \textbf{Figure~\ref{fig_newton_hmlbl}}, the
behavior of the plain Newton method and of the zigzag method is compared
(Newton's method with line search cannot approach saddle points and is
therefore not tested). We see that both methods are successful from all
starting points. An advantage of the zigzag method (bottom left diagram)
seems to be that there is a tendency to approach the {\em closest}
stationary point, whereas Newton's method (top left diagram) sometimes
performs large jumps (in particular when starting close a singularity)
and therefore ends in distant stationary points (orangered and black
trajectories). A disadvantage of the zigzag method is visible for the
black trajectory in the two bottom diagrams: In the first steps, the
progress is impeded by tiny zigzags along a tight ravine (but without
triggering the parallelity check, see the strategy string in the bottom
right diagram).
\begin{figure}[t]
\begin{center}
\includegraphics[height=0.47\textwidth]{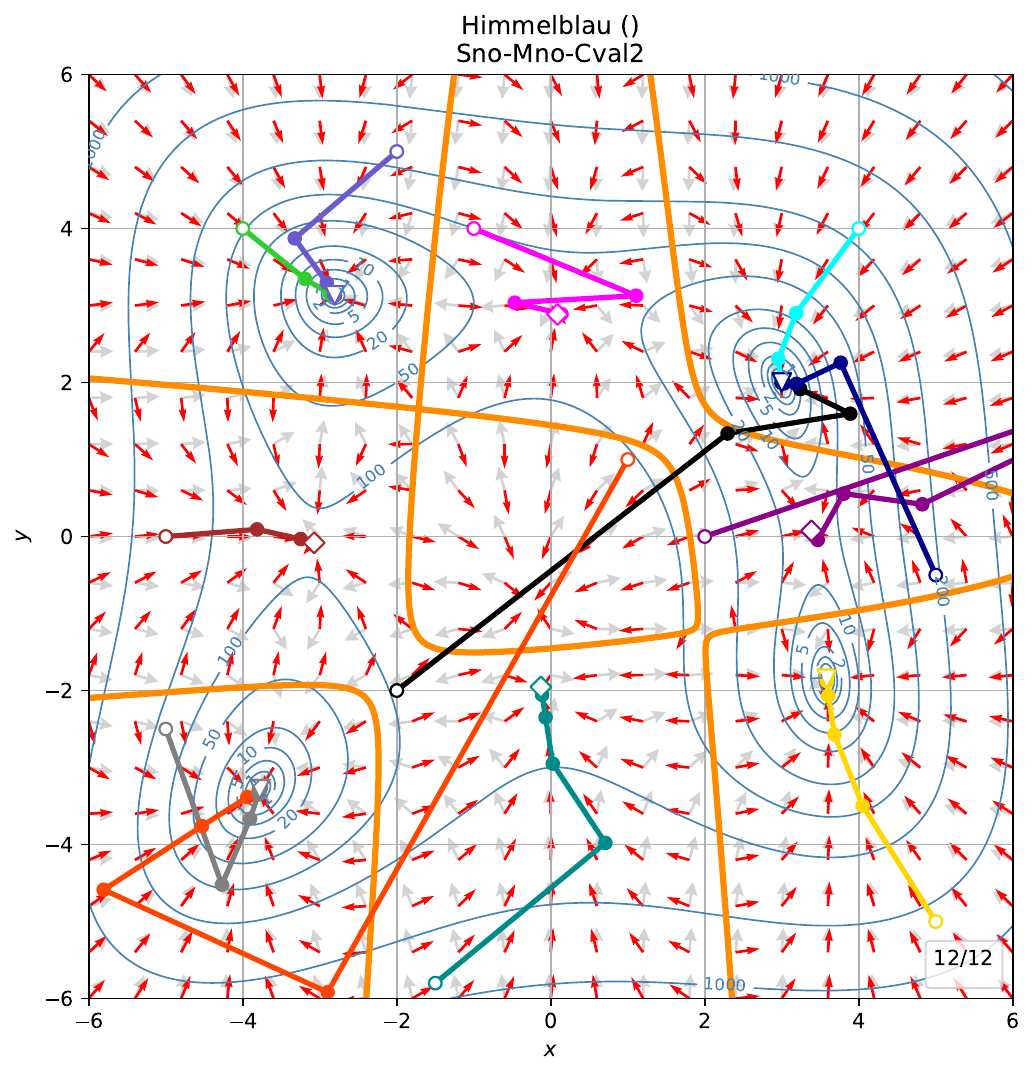}
\hspace*{1mm}
\includegraphics[height=0.47\textwidth]{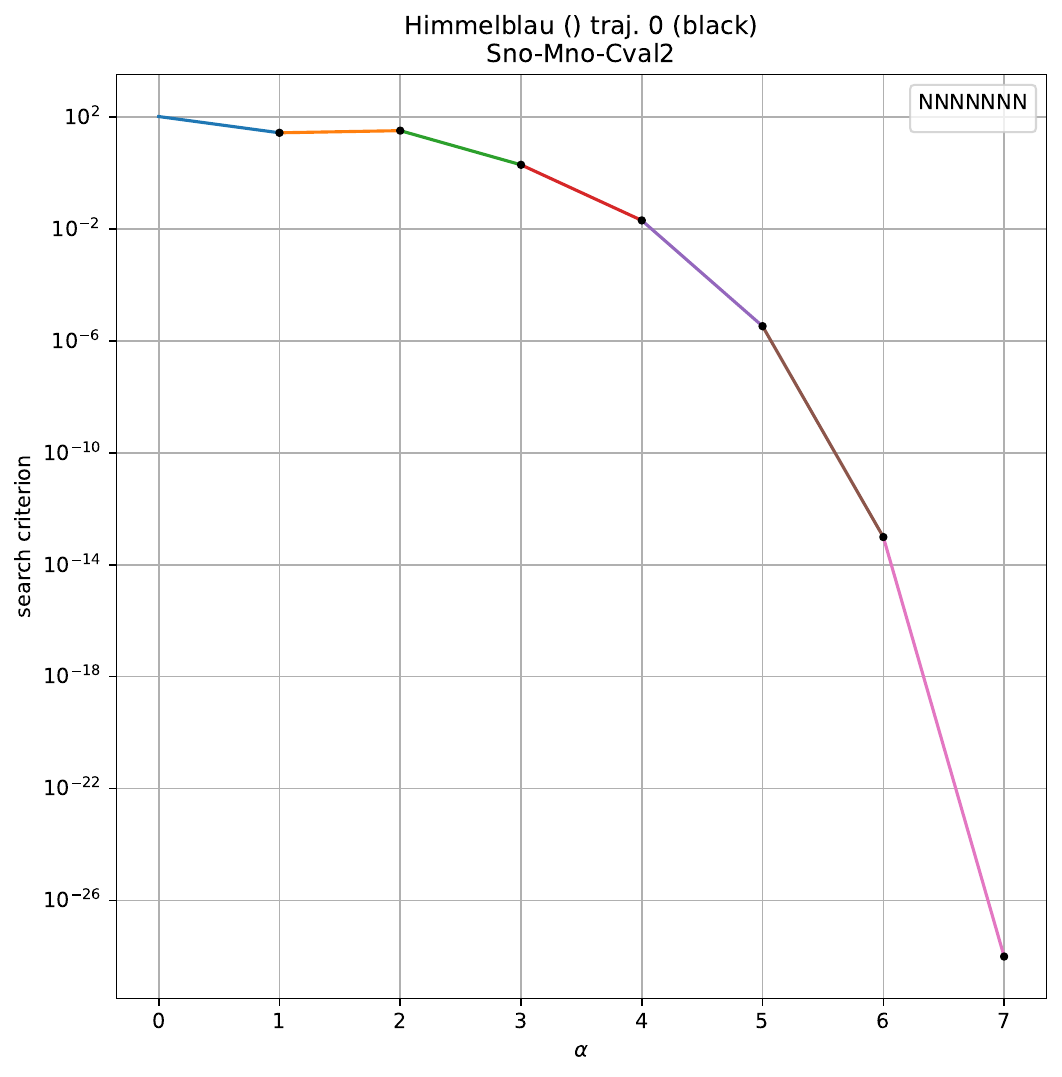}\\
\includegraphics[height=0.47\textwidth]{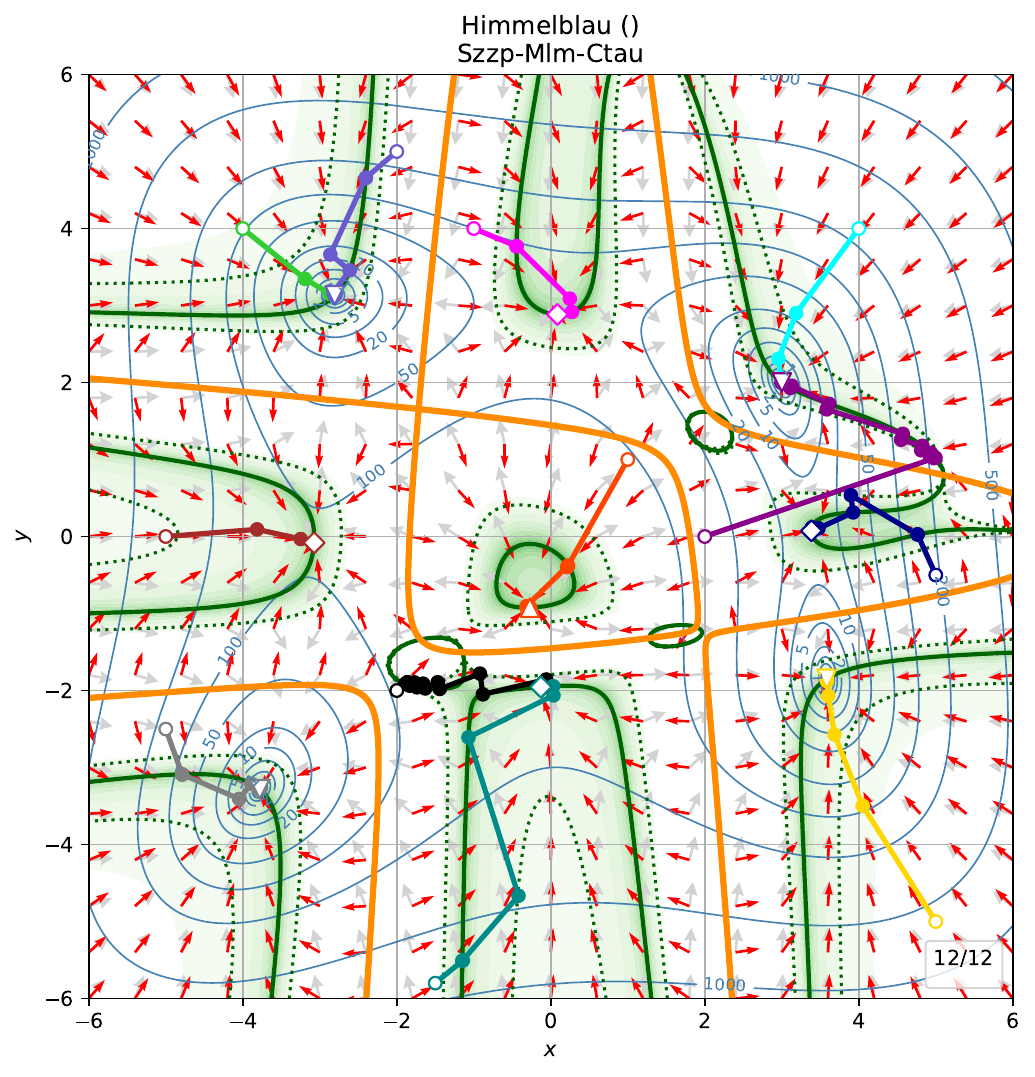}
\hspace*{1mm}
\includegraphics[height=0.47\textwidth]{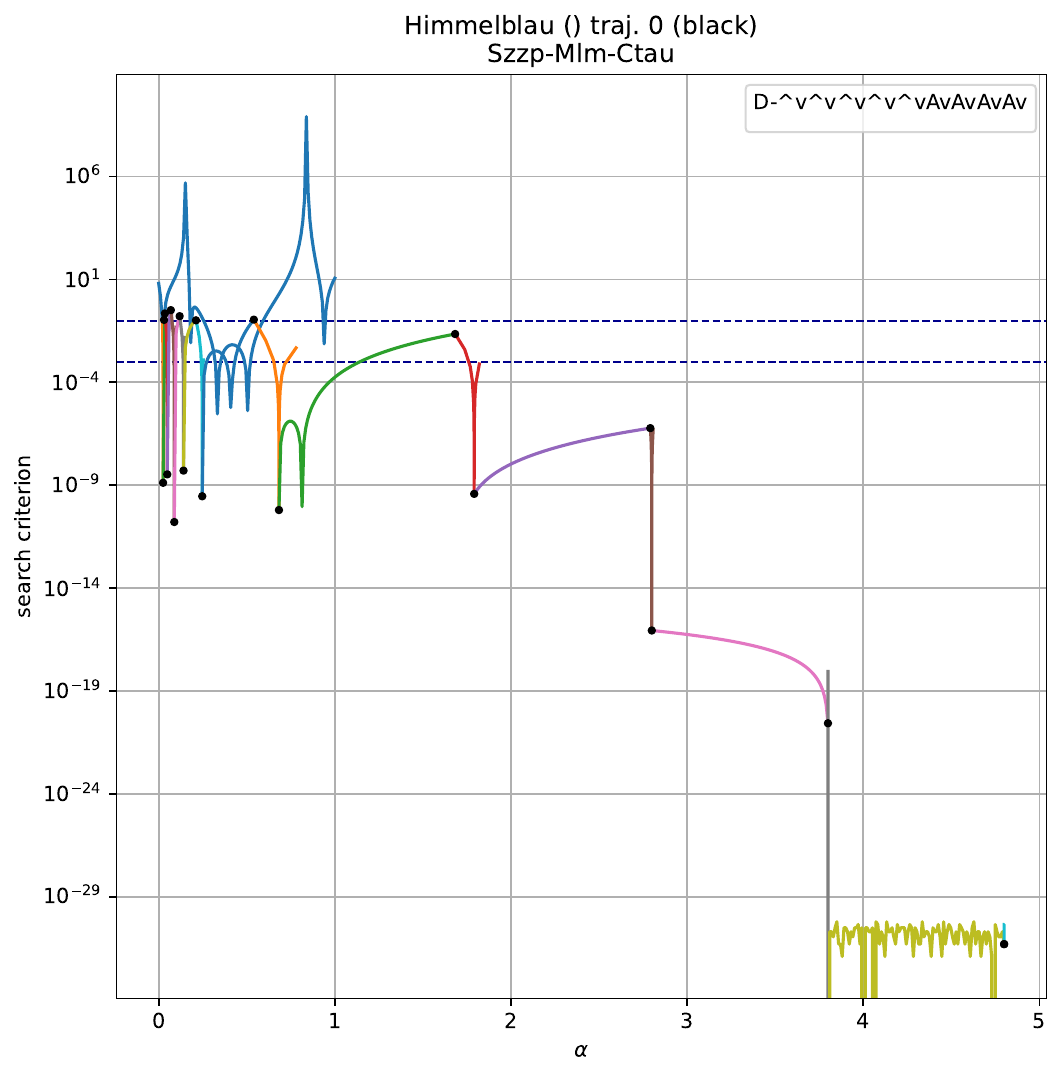}\\      
\caption{}
\label{fig_newton_hmlbl}
\end{center}
\end{figure}
\textbf{Henon-Heiles}: In the results for the Henon-Heiles function in
\textbf{Figure~\ref{fir_newton_hnnhls}} we see that both the plain
Newton method (two top diagrams) and the zigzag method (two bottom
diagrams) succeed from all starting points. A crucial different between
the two method becomes apparent when looking at the slateblue
trajectory: While in Newton's method the trajectory can reach the
minimum in the center, the trajectory is blocked by the closed ravine
from doing so and has to zigzag its way to the upper saddle point
instead (also visible in the bottom right diagram). Only the dark
magenta trajectory which approaches exactly from below can reach the
minimum point in the zigzag strategy (bottom left diagram), presumably
because the $\check{\tau}$-ravine coincides with the singularity on this
path and is therefore missed in the explicit search of the down phase.
\begin{figure}[t]
\begin{center}
\includegraphics[height=0.47\textwidth]{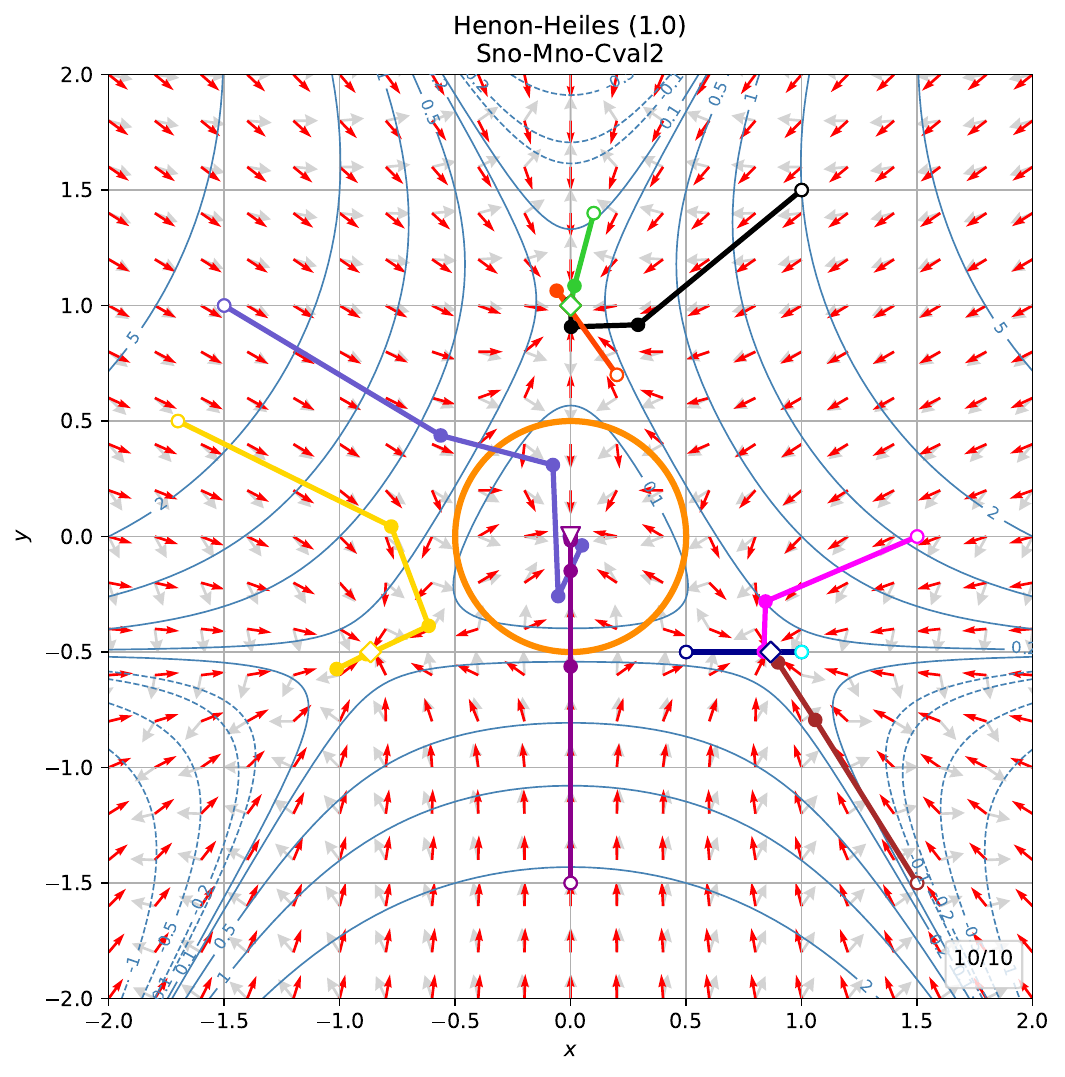}
\hspace*{1mm}
\includegraphics[height=0.47\textwidth]{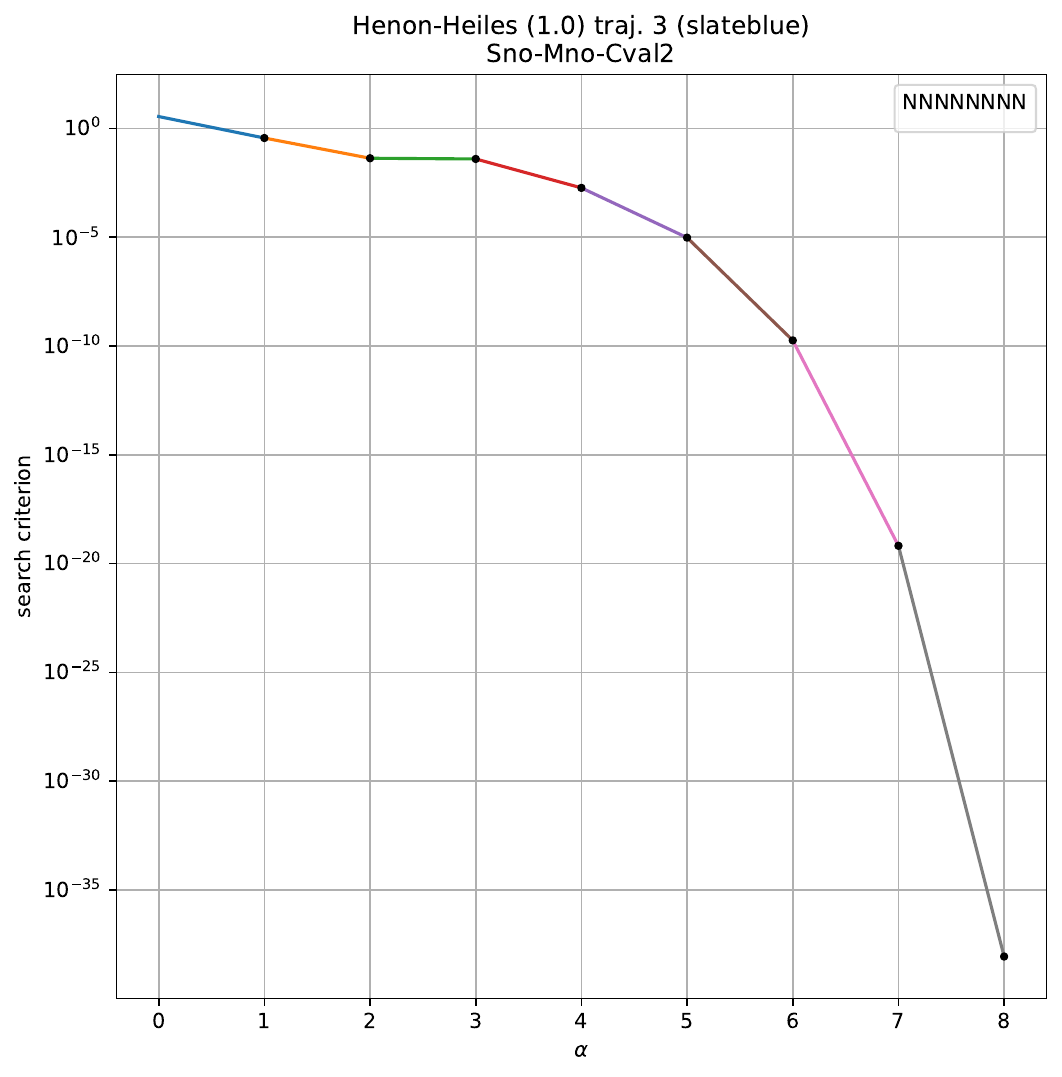}\\
\includegraphics[height=0.47\textwidth]{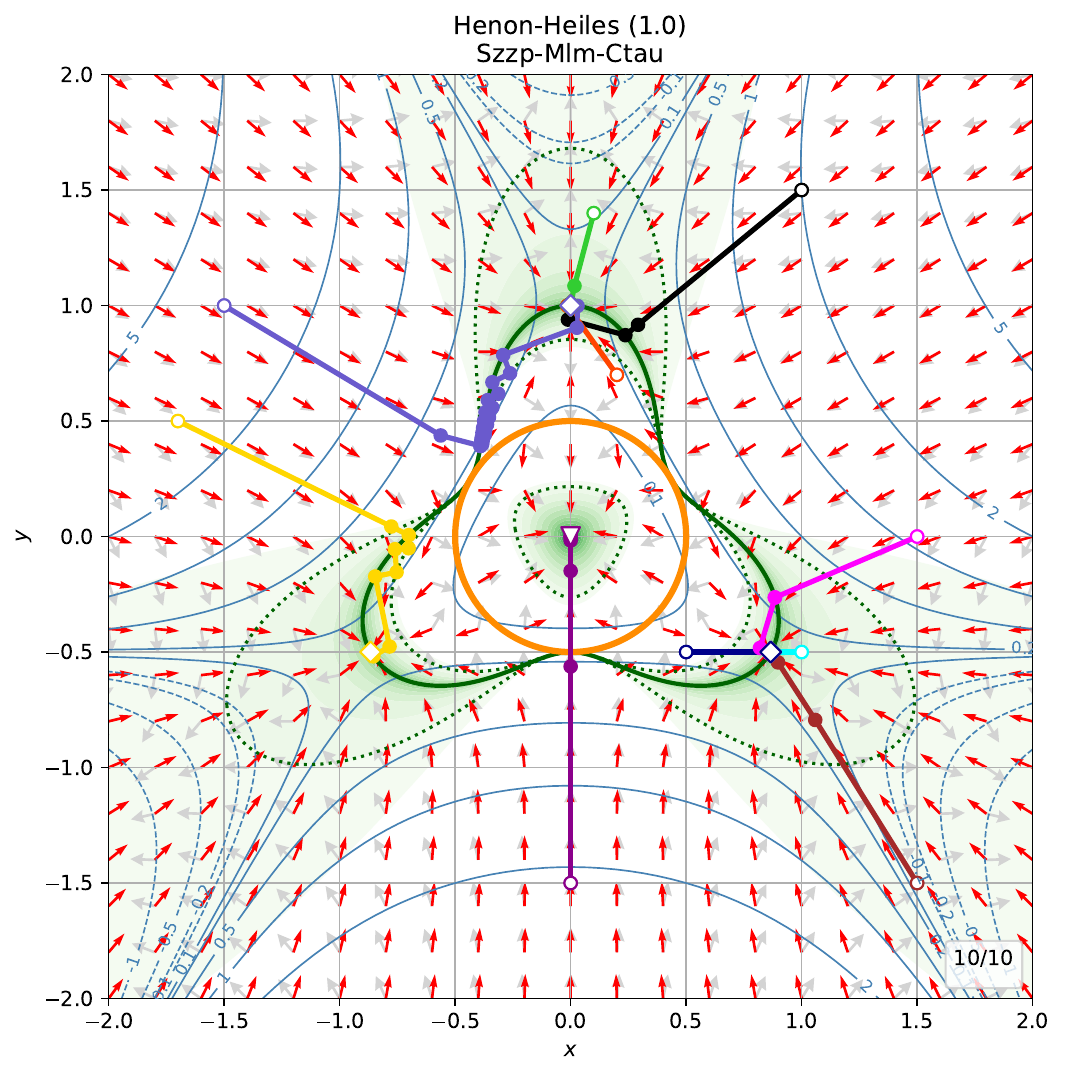}
\hspace*{1mm}
\includegraphics[height=0.47\textwidth]{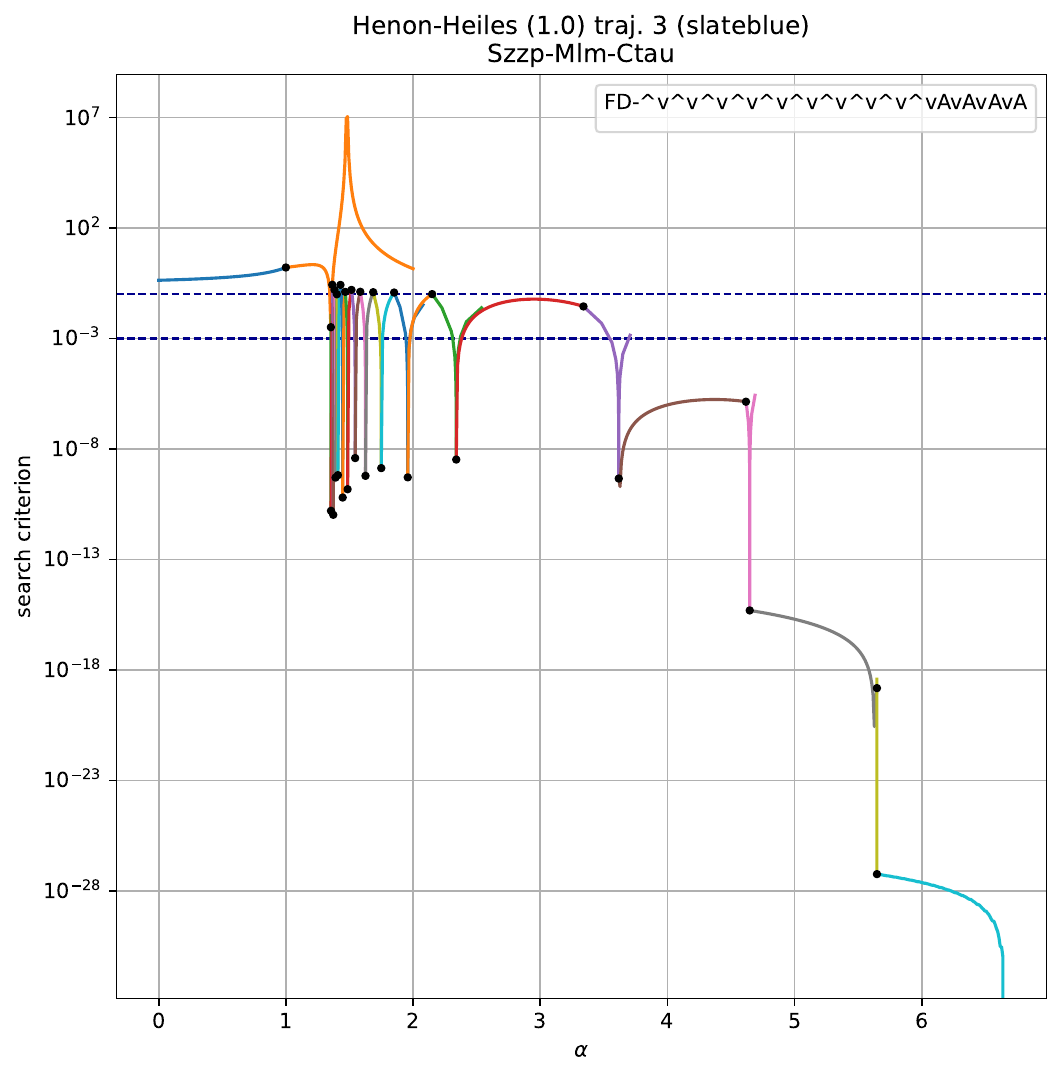}\\      
\caption{}
\label{fir_newton_hnnhls}
\end{center}
\end{figure}
In the \textbf{junction1} function, for which the trajectories are
visualized in \textbf{Figure~\ref{fig_newton_junction1}}, we only have a
single local minimum and therefore can test the plain Newton method (top
left), Newton's method with value-based line search (top right), the
zigzag method without parallelity check (bottom left), and the zigzag
method with parallelity check (bottom right).

Surprisingly, the plain Newton method (top left) only succeeds for half
of the trajectories. This is probably due to the large jumps which lead
some trajectories into regions with diverging Newton steps. Newton's
method with value-based line search (top right) fares much better and
only fails for the magenta trajectory which gets stuck right from the
start. (Unexpectedly, this makes the junction1 function an interesting
test case for line search.)

The behavior of the zigzag strategy is unsatisfying. Without and with
parallelity check (two bottom diagrams), there is lot of unnecessary
zigzagging in tight ravines which impedes the progress. Introducing the
parallelity check improves the performance. While the slateblue and
brown trajectory reach the vicinity of the minimum, they cannot leave
the ravines without parallelity check (bottom left diagram) to cover the
remaining distance to the minimum. With parallelity check (bottom right
diagram) they leave the ravines after some zigzag movements and cross
over to the minimum. The magenta trajectory fails with and without
parallelity check (but for different reasons).
\begin{figure}[t]
\begin{center}
\includegraphics[height=0.47\textwidth]{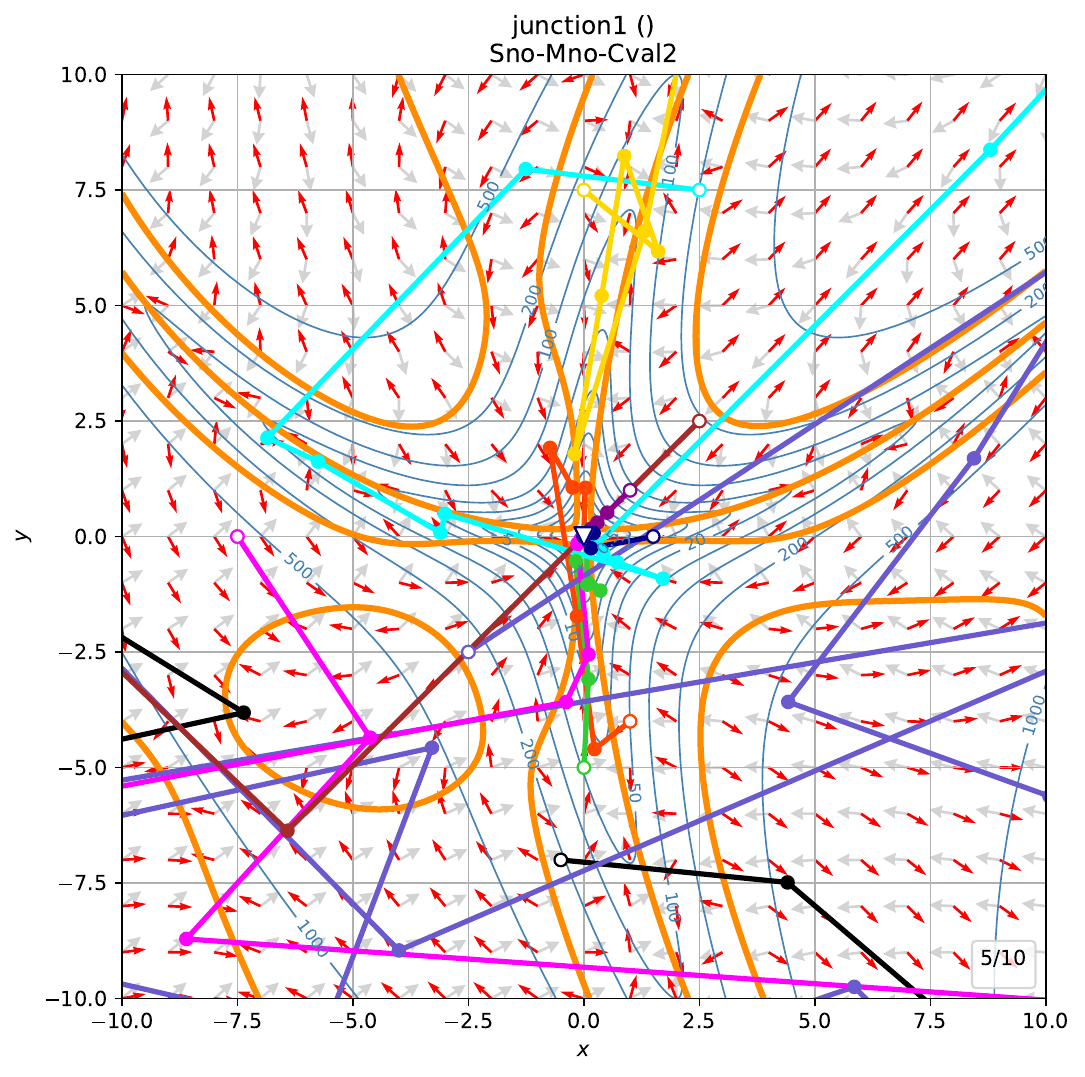}
\hspace*{1mm}
\includegraphics[height=0.47\textwidth]{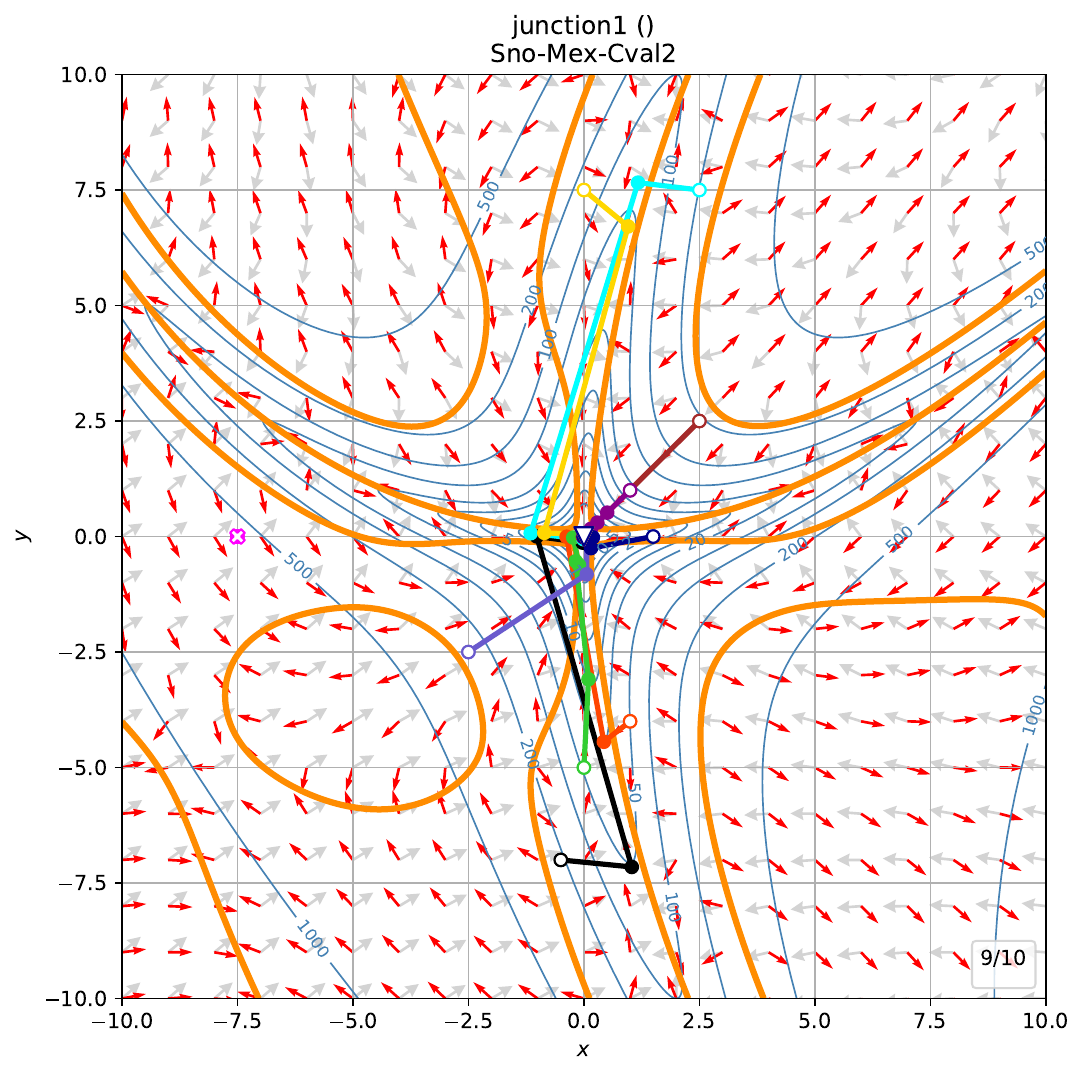}\\
\includegraphics[height=0.47\textwidth]{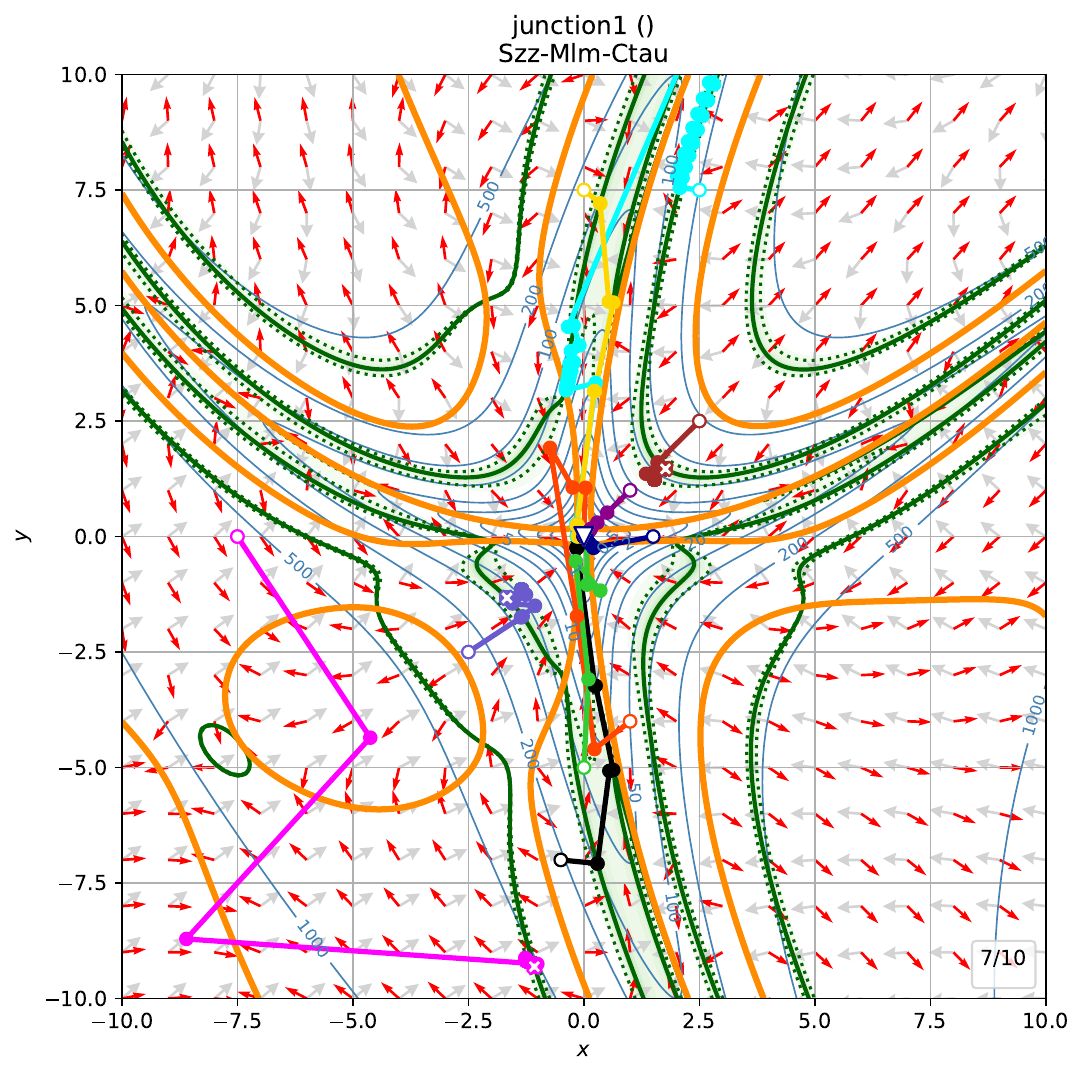}
\hspace*{1mm}
\includegraphics[height=0.47\textwidth]{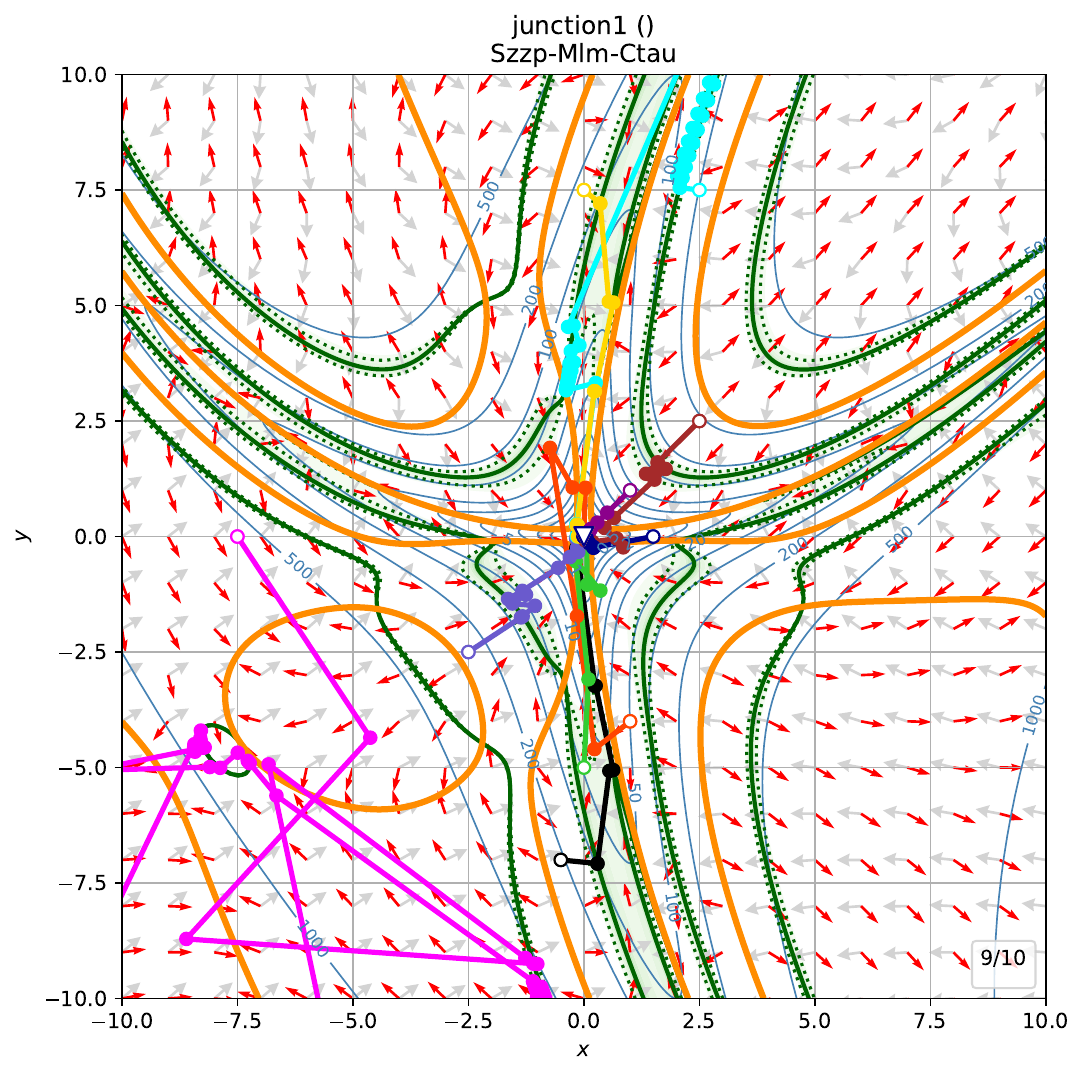}\\      
\caption{}
\label{fig_newton_junction1}
\end{center}
\end{figure}
\textbf{Junction2}: The behavior of the zigzag strategy for this
function is shown in \textbf{Figure~\ref{fig_newton_junction2}}. As in
the junction1 function, the methods succeeds in most cases, but the
overall behavior is unsatisfactory due to extended zigzagging phases in
tight ravines (see e.g. the cyan and slateblue trajectories). This is
exemplified in the top right diagram for the golden trajectory with
originates far from the minimum (bottom right diagram), is rescued twice
by the parallelity check (see the strategy string), but ultimately
manages to reach the minimum.
\begin{figure}[t]
\begin{center}
\includegraphics[height=0.47\textwidth]{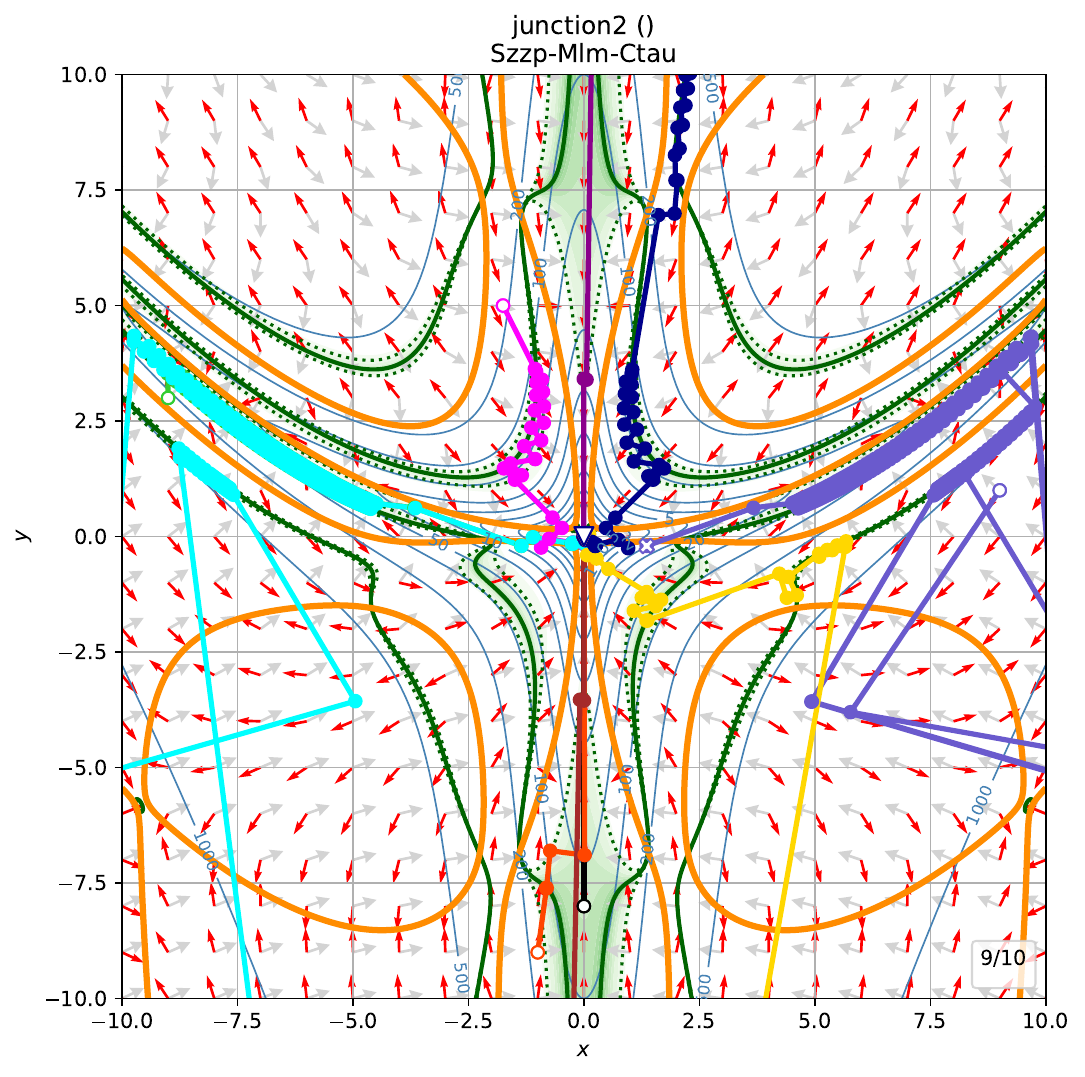}
\hspace*{1mm}
\includegraphics[height=0.47\textwidth]{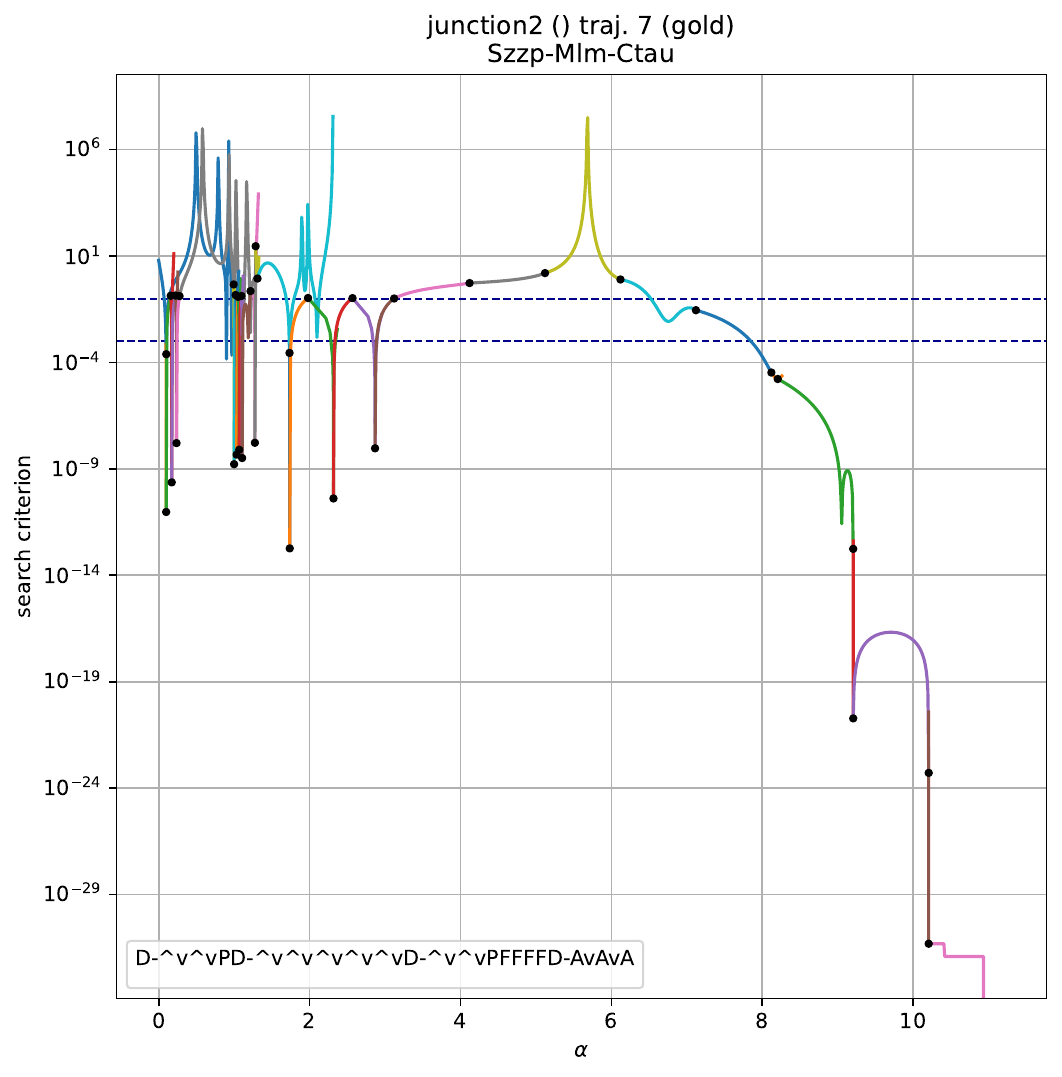}\\
\includegraphics[height=0.47\textwidth]{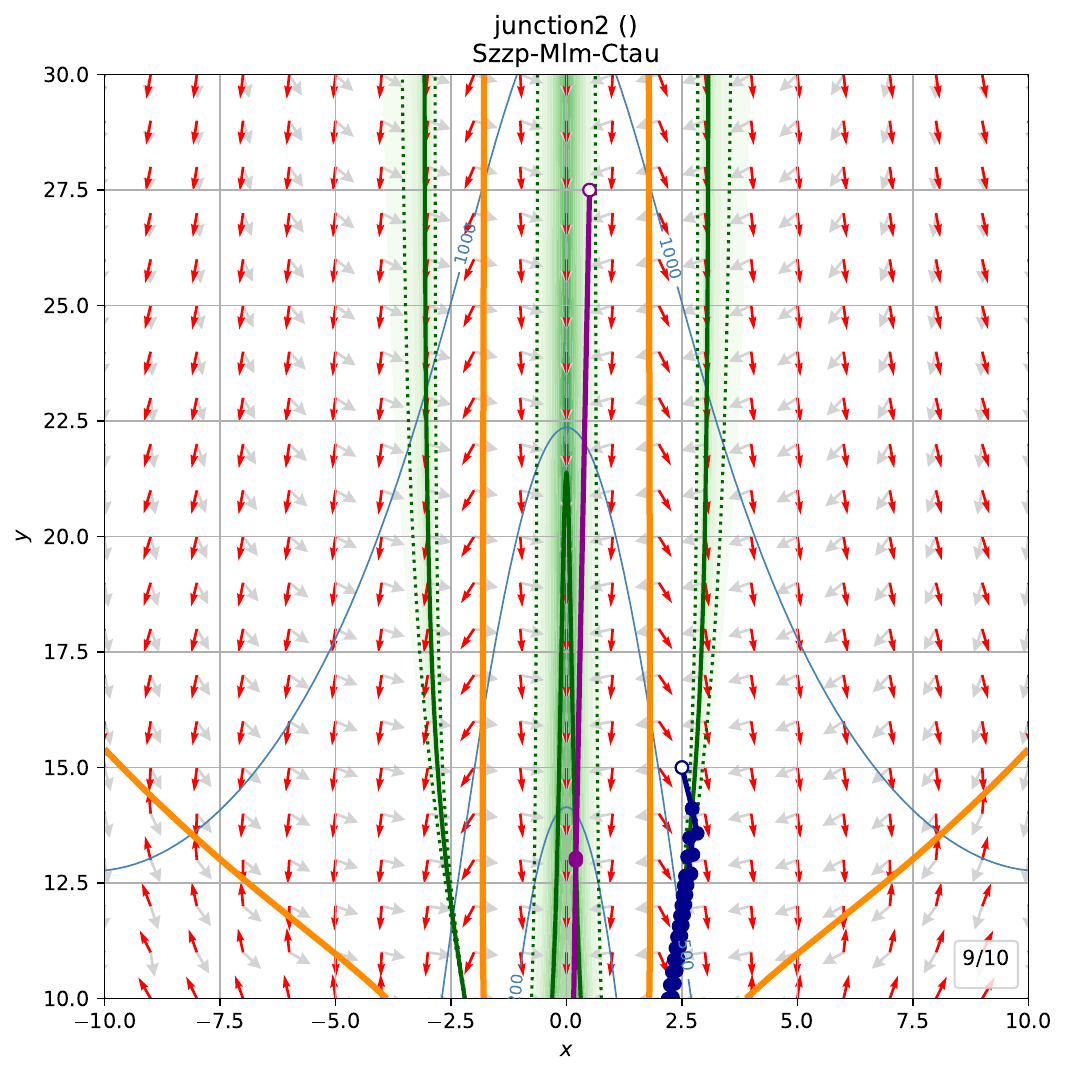}
\hspace*{1mm}
\includegraphics[height=0.47\textwidth]{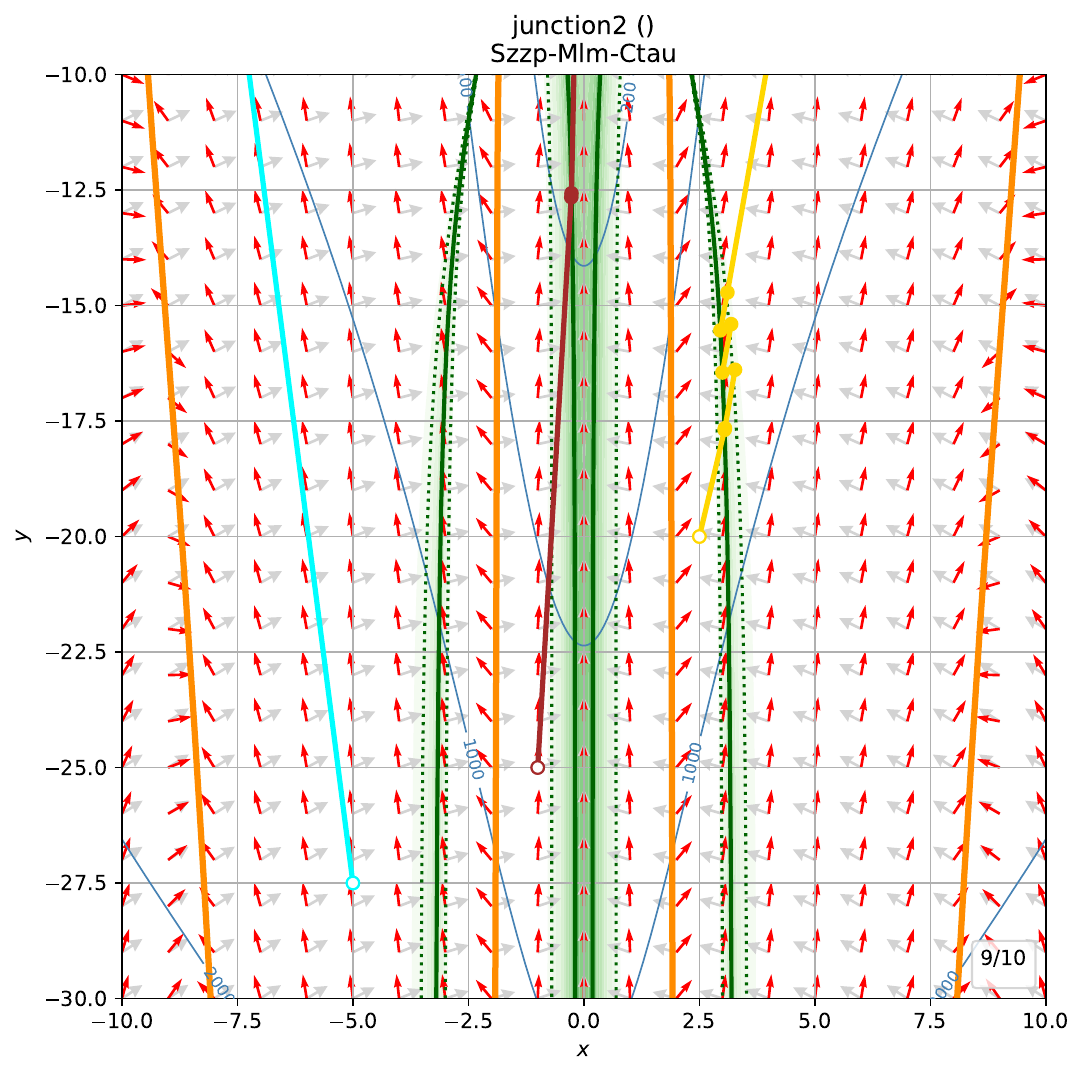}\\      
\caption{}
\label{fig_newton_junction2}
\end{center}
\end{figure}
\textbf{Goldstein-Price}: In contrast to the junction functions where
there extensive zigzagging in tight ravine impedes the progress, the
zigzag method appears to use zigzagging rarely when applied to the
Goldstein-Price function, see \textbf{Figure~\ref{fig_newton_gldstnprc}}
(left diagram). It nevertheless succeeds for all tested starting points
and approaches the minimum at $(0,-1)$ (better visible in the close-up
in the right diagram) as well as four saddle points. As expected, the
complex but rather sparse ravine structure of this function doesn't lend
itself to the application of the zigzag strategy.
\begin{figure}[tp]
\begin{center}
\includegraphics[height=0.47\textwidth]{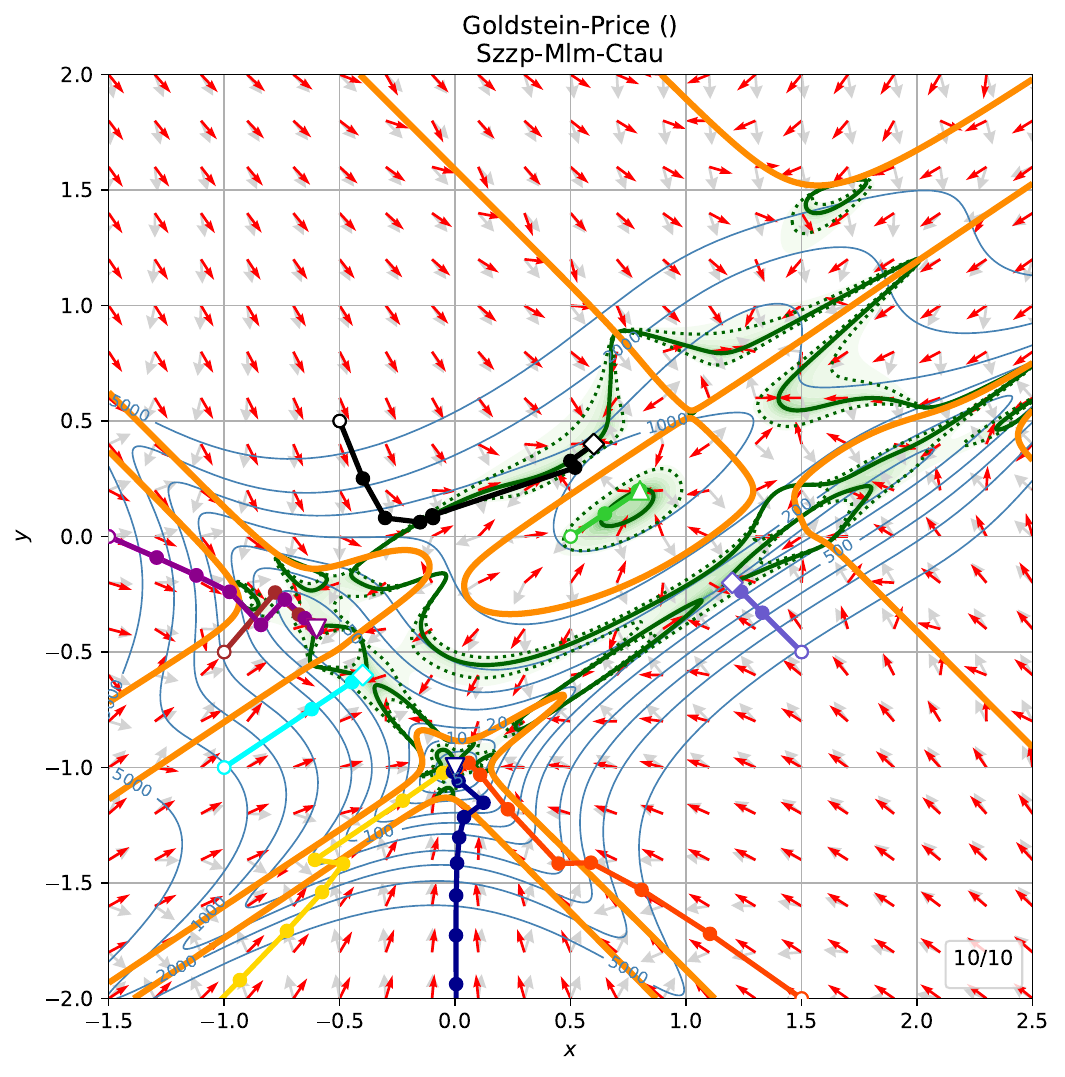}
\hspace*{1mm}
\includegraphics[height=0.47\textwidth]{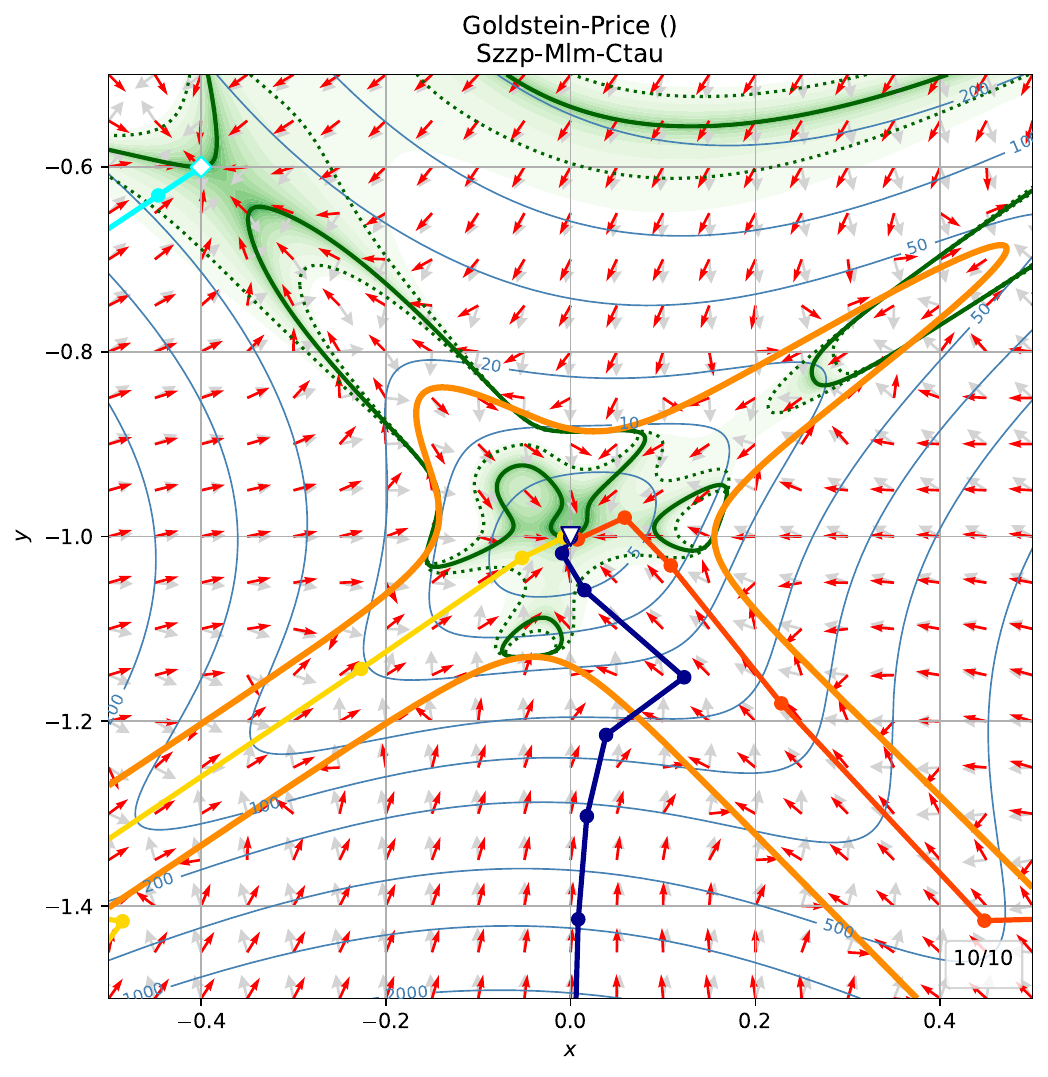}
\caption{}
\label{fig_newton_gldstnprc}
\end{center}
\end{figure}
\textbf{Figure~\ref{fig_newton_beale}} (left diagram): The behavior of
the zigzag strategy in the \textbf{Beale} function partly resembles that
in the junction functions --- unnecessary zigzagging in tight ravines
--- and partly that in the Goldstein-Price function --- the sparse
ravine structure cannot be exploited by the method. The minimum at
$(3,0.5)$ and two saddle points are found by the zigzag method. The
right diagram reveals the reason why the orangered trajectory diverges:
The close-up exhibits a tight ravine (close to a countercurrent
singularity) which blocks the path to a stationary point and leads the
method astray.
\begin{figure}[tp]
\begin{center}
\includegraphics[height=0.47\textwidth]{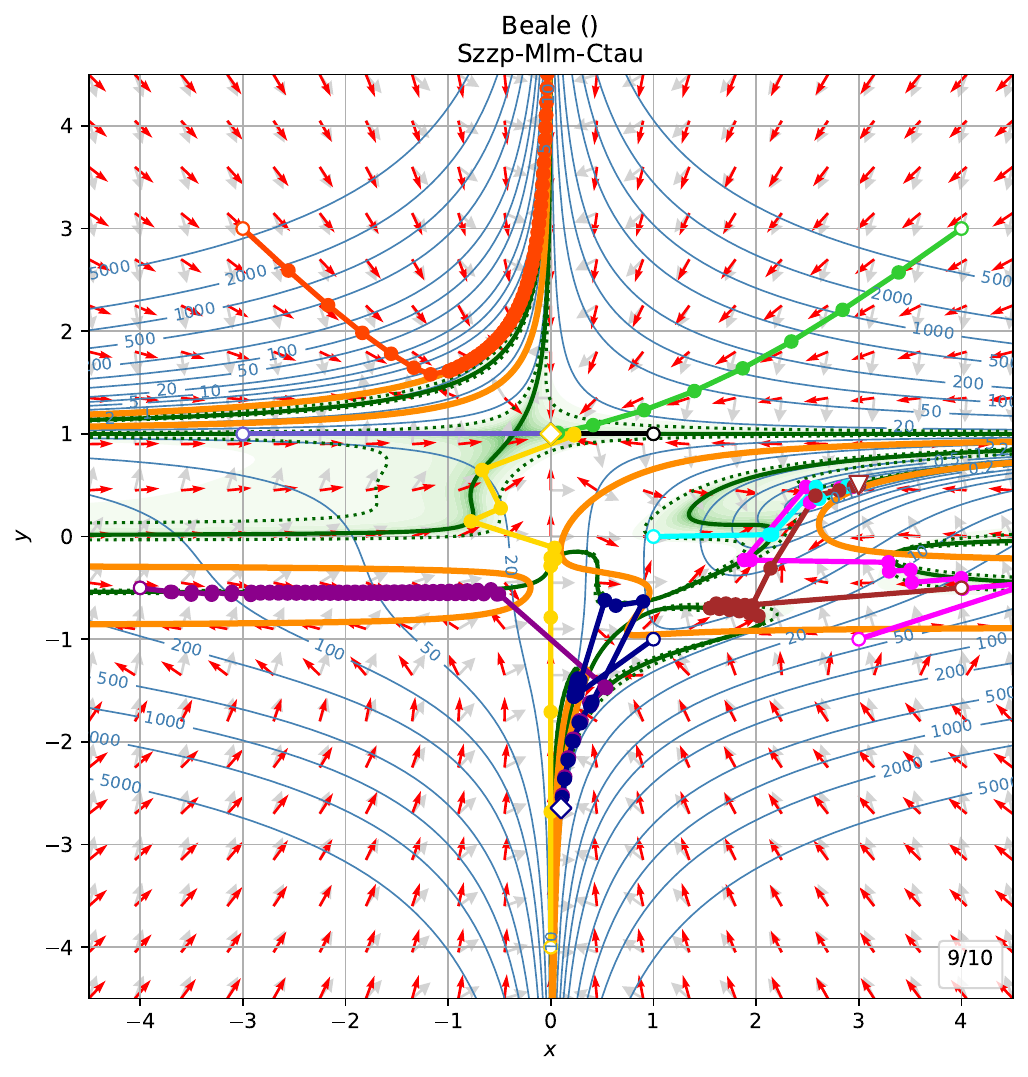}
\hspace*{1mm}
\includegraphics[height=0.47\textwidth]{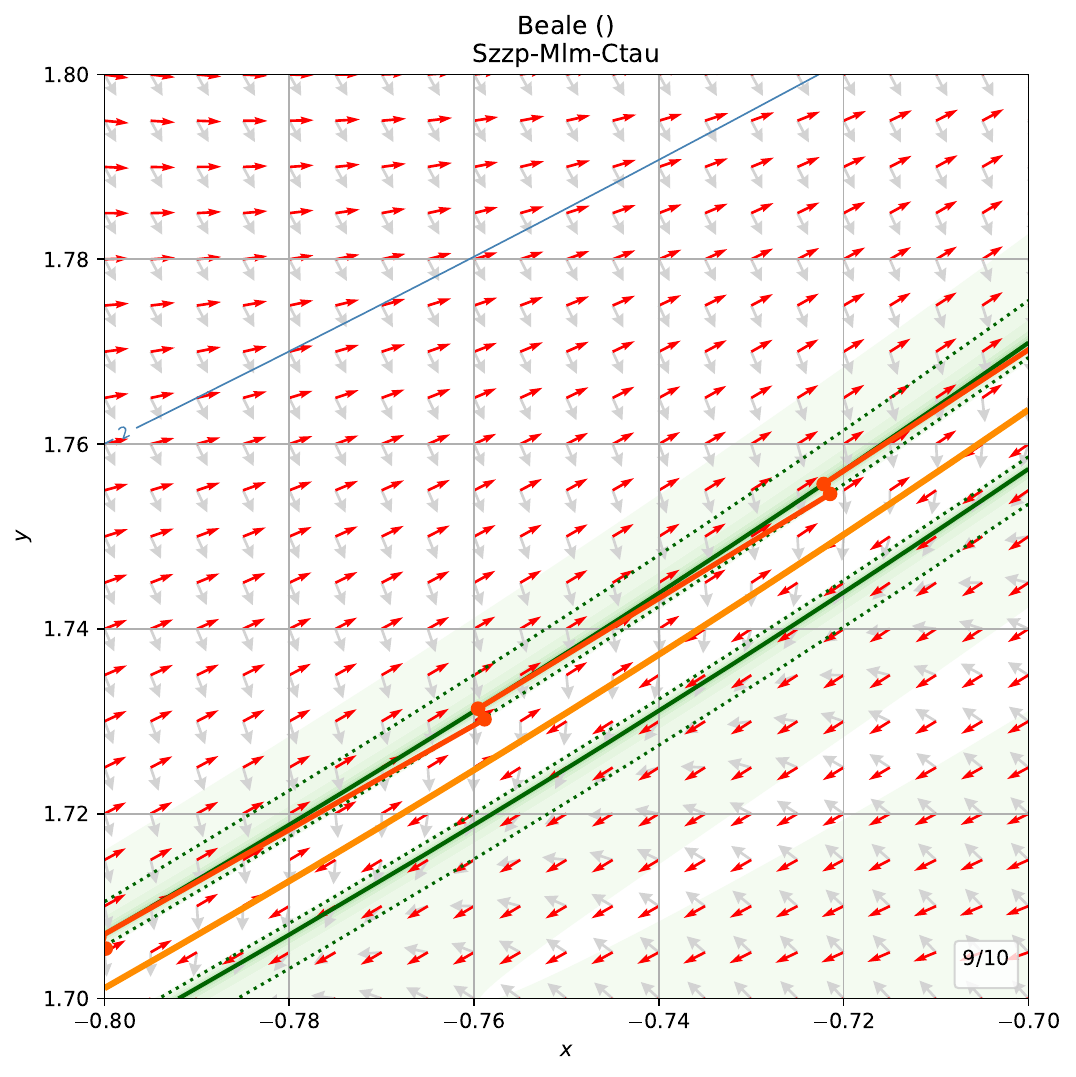}
\caption{}
\label{fig_newton_beale}
\end{center}
\end{figure}

\section{Experiments on a Higher-Dimensional Test Function}\label{sec_exp_highd}

%
The only higher-dimensional example (\textbf{eigen}) studied in this
work is based on a Lagrangian which has its stationary points in the
eigenvectors $\mb{w}$ and eigenvalues $\bs{\lambda}$ of a covariance
matrix $\mb{C}$ of dimension $n$. In this case, the Lagrange multiplier
$\bs{\lambda}$ has a specific problem-related meaning --- in the
stationary points, it coincides with the eigenvalue of $\mb{C}$ which
belongs to the eigenvector $\mb{w}$.

The Lagrangian is given by
\begin{align}
L\mleft( \mb{w},\bs{\lambda} \mright)
&=
\frac{1}{2}\, \mb{w}^T \mb{C} \mb{w} + \frac{1}{2}\, \bs{\lambda}
\mleft( 1 - \mb{w}^T \mb{w} \mright)
\end{align}
(the sign of the constraint term is chosen such that, in a stationary
point, the Lagrange multiplier becomes an eigenvalue of $\mb{C}$ rather
than a negative eigenvalue).

The joint state vector on which Newton's method is applied is
\begin{align}
\mb{x}
&=
\begin{pmatrix}\mb{w}\\\bs{\lambda}\end{pmatrix}.
\end{align}
The components of the gradient of the Lagrangian are
\begin{align}
\frac{\partial}{\partial \mb{w}}\,L
&=
\mb{w}^T \mb{C} - \bs{\lambda} \mb{w}^T\\
\frac{\partial}{\partial \bs{\lambda}}\,L
&=
\frac{1}{2}\, \mleft( 1 - \mb{w}^T \mb{w} \mright).
\end{align}
Expressed as a joint column vector, the gradient is written as
\begin{align}
\mb{g} & = \mleft( \frac{\partial}{\partial \mb{x}}\,L \mright)^T = \begin{pmatrix}\frac{\partial}{\partial \mb{w}}\,L &
\frac{\partial}{\partial \bs{\lambda}}\,L\end{pmatrix}^T = \begin{pmatrix}\mb{C} \mb{w} - \bs{\lambda} \mb{w} \\\frac{1}{2}\,
\mleft( 1 - \mb{w}^T \mb{w} \mright) \end{pmatrix}.
\end{align}
The stationary points found from $\mb{g} = \mb{0}_{n+1}$,
\begin{align}
\mb{C} \mb{w}
&=
\bs{\lambda} \mb{w}\\
\mb{w}^T \mb{w}
&=
1,
\end{align}
are all pairs of the unit-length eigenvectors and the eigenvalues of
$\mb{C}$.

From the gradient we obtain the components of the Hessian
\begin{align}
\frac{\partial}{\partial \mb{w}}\,\mleft( \frac{\partial}{\partial
\mb{w}}\,L \mright)^T
&=
\mb{C} - \bs{\lambda} \mb{I}_{n}\\
\frac{\partial}{\partial \bs{\lambda}}\,\mleft( \frac{\partial}{\partial
\mb{w}}\,L \mright)^T
&=
- \mb{w}\\
\frac{\partial}{\partial \mb{w}}\,\mleft( \frac{\partial}{\partial
\bs{\lambda}}\,L \mright)^T
&=
- \mb{w}^T\\
\frac{\partial}{\partial \bs{\lambda}}\,\mleft( \frac{\partial}{\partial
\bs{\lambda}}\,L \mright)^T
&=
0
\end{align}
which can be put together into a single matrix of size $(n + 1) \times
(n + 1)$ as
\begin{align}
\mb{H} & = \frac{\partial}{\partial \mb{x}}\,\mb{g} = \begin{pmatrix}\mb{C} - \bs{\lambda} \mb{I}_{n} & - \mb{w}\\- \mb{w}^T &
0\end{pmatrix}.
\end{align}
The third derivatives are obtained from
\begin{align}
\frac{\partial}{\partial w_{k}}\,\mb{H}
&=
\begin{pmatrix}\mb{0}_{n,n} & - \mb{e}_{k}\\- \mb{e}_{k}^T &
0\end{pmatrix} \quad\mbox{for}\; k = 1 \ldots n\\
\frac{\partial}{\partial \bs{\lambda}}\,\mb{H}
&=
\begin{pmatrix}- \mb{I}_{n} & \mb{0}_n\\\mb{0}_n^T & 0\end{pmatrix}
\end{align}
where $\mb{e}_{k}$ is a unit vector with element $1$ at index $k$.

This is probably a rare example where the third derivatives have a
sparse structure --- this could be exploited to accelerate the
computations of the terms required for the zigzag strategy.

Note that for this simple example we are not studying whether the
stationary points of the Lagrangian approached by Newton's method are
maxima, minima, or saddle points with respect to the unconstrained part
of the Lagrangian $\mb{w}^T \mb{C} \mb{w} $; see e.g.
\citet[p.~337]{nn_Nocedal06} on how this could be accomplished.

In the experiments, we use $n = 10$ dimensions. The covariance matrix is
formed by inverse spectral decomposition from a diagonal matrix and a
random orthogonal matrix, the latter obtained from QR decomposition of a
random matrix. The diagonal matrix has a diagonal starting with $1$ and
doubling the value for all subsequent entries.

The initial vector $\mb{w}$ is obtained by drawing all elements from a
uniform distribution in the range $[-1,1]$. The initial Lagrange
multiplier $\bs{\lambda}$ is drawn from a uniform distribution in the
range $[0, 100]$. The experiment is repeated $10$ times with different
random initialization.

\textbf{Figure~\ref{fig_eigen}} shows the result of a selected
trajectory (left diagram) and a superposition of all $10$ trajectories
(criterion over the index of the Newton iteration, ignoring sub-steps).
\begin{figure}[tp]
\begin{center}
\includegraphics[height=0.47\textwidth]{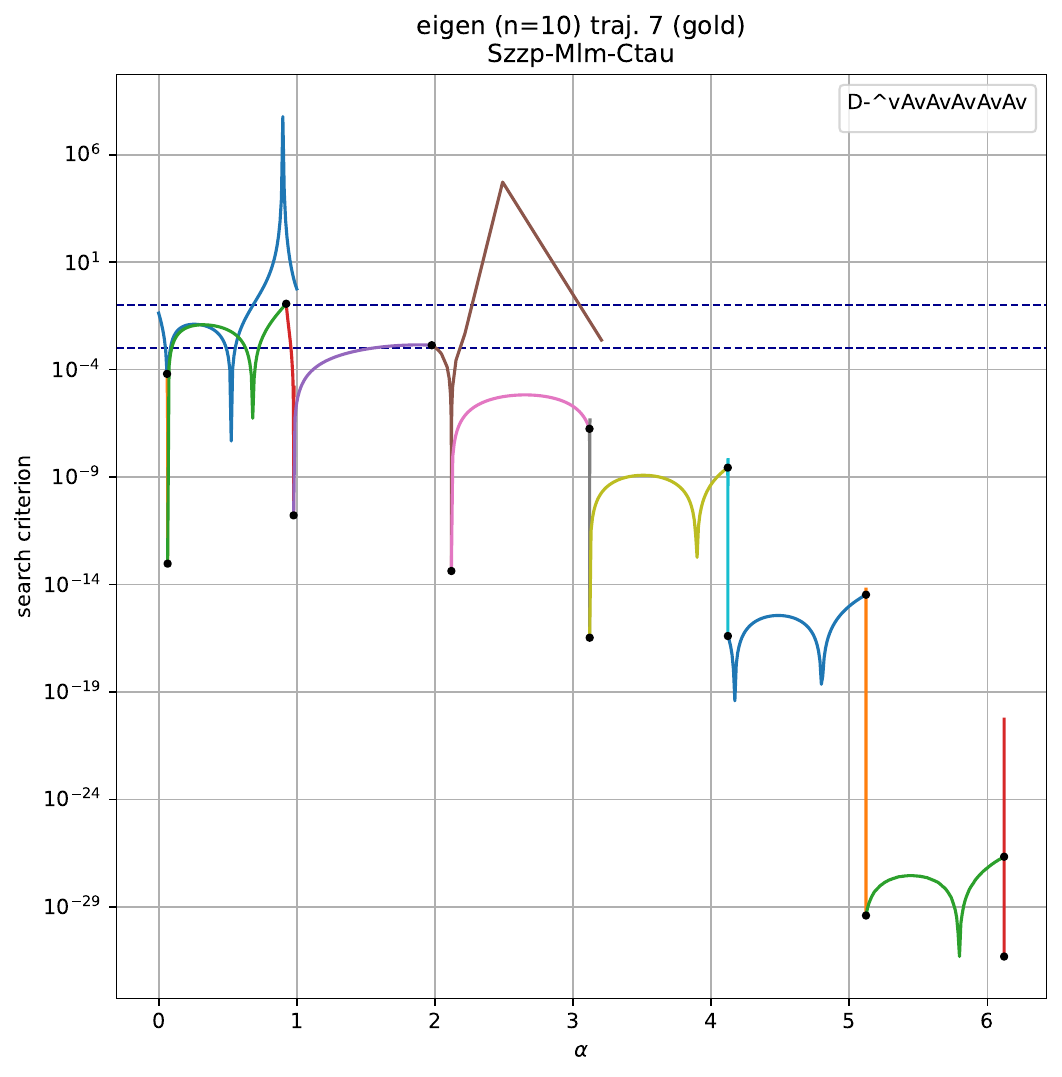}
\hspace*{1mm}
\includegraphics[height=0.47\textwidth]{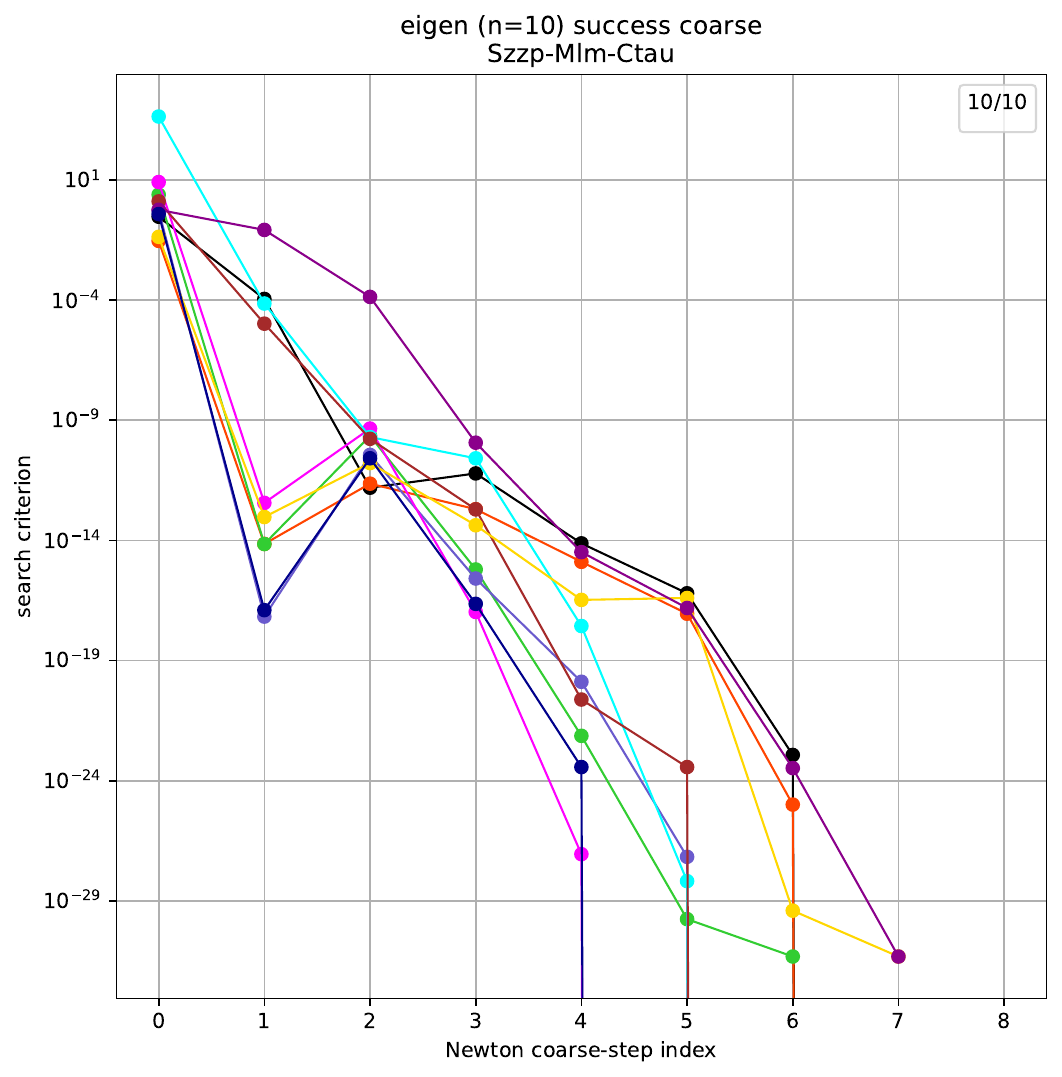}
\caption{}
\label{fig_eigen}
\end{center}
\end{figure}
It is visible that all trajectories successfully converge (right
diagram). However, in the selected trajectory (left diagram), there is
only a single damped zigzag step followed by undamped zigzag steps (and
in all other trajectories there are only undamped steps), so in this
test function there is apparently no need for a line search. At least
the experiment confirms that the down phase can descent into a ravine
hyperplane, and that zigzagging along this hyperplane leads the method
to a stationary point (for the selected trajectory 7, to the stationary
point related to eigenvalue $32$).

\section{Discussion}\label{sec_discussion}

%
Newton's method cannot use value-based line-search when approaching
saddle points. This is particulary problematic for objective functions
coupled to equality constraints as in this case {\em all} stationary
points are saddles. A possible solution is to use a merit function
instead of the objective function. A merit function is composed of terms
relating to the objective function and terms relating to the constraint
function. However, the selection of a suitable merit function from the
large number of known alternatives and the choice of the weighting
parameter between the terms is problematic.\footnote{As I have no
practical experience with merit functions, I decided not to use them for
a comparison with the zigzag method suggested in this work. Apart from
that, this work is at an exploratory stage where such a comparison
appeared to be premature.}

The alternative line-search method developed here mainly addresses the
problem of following deep ``valleys'' where damping of the Newton vector
length is required to stay close to the valley floor. The method uses a
criterion based on the divergence of the field of Newton steps. This
criterion ($\check{\tau}$) is non-negative and has a zero at all
stationary points. While the original idea behind this criterion was a
different one, I discovered that in many cases ``ravines'' extend
outward from the stationary points. These ravines have a flat, zero
bottom. A ``zigzag'' strategy was developed which can enter a nearby
ravine and then follow the ravine along the Newton vectors towards a
stationary point. Whenever the method is about to leave the ravine, a
``pullback'' term moves it back towards the ravine's bottom.

Experiments exploring both the ravine structure and the performance of
the zigzag method where performed for several two-dimensional test
functions and a single higher-dimensional test function (as the latter
is the only example using equality constraints, more experiments are
needed). Looking at the results, it seems neither justified to endorse
the novel line-search method nor to completely reject it. I will first
discuss the criterion and the corresponding ravine structure and then
the proposed zigzag method.

\subsection{Ravine Structure}

%
The Rosenbrock function is a widely used example to explore the behavior
of optimization methods in deep valleys. Here, a single
$\check{\tau}$-ravine runs along the bottom of the valley (and through
the single stationary point, a minimum). Crucially, this is also the
case for the saddle-point version of Rosenbrock's function. Also another
version, the ``Rosenbrock ditch'' has a ravine at the valley bottom
(passing the minimum). However, even in this simple modification of the
original Rosenbrock function, the ravine structure gets surprisingly
complex, with additional ravines running adjacent to the one at the
valley bottom. To jump between them, a ravine-traversing method may
require special cases.

The Himmelblau function has a clear structure of ravines passing through
the stationary points, but additional tight, looped ravines appear which
are not related to stationary points and may lead a ravine-traversing
method astray. In addition, we see ravines passing through singularities
of the Hessian. This is problematic for traversing the ravine since it
gets steep near the cross-over points.

Also in the Henon-Heiles function has a clear ravine structure, in this
case a single loop. However, one of the attractors is not connected to
the ravine network but an isolated point. Because this point is also
surrounded by the ravine loop, it may be difficult to reach. As in the
Himmelblau function, we see problematic cross-overs between ravines and
Hessian singularities.

In more complex test functions, the ravine structure turns out to be
complex and difficult to predict. Crossing ditches (test functions
junction1 and junction2) lead to a large number of adjacent ravines,
none of which are connected to the single stationary point. Ravines
partly run close to the valley bottoms, but peter out when the approach
the minimum; in some cases, they fold back. In the Goldstein-Price and
Beale functions, there appears to be no clear relationship between
valleys and ravines, such that a ravine-traversing method can only fall
back to plain Newton steps as a default.

This raises the question whether alternatives to the divergence-based
criterion exist where the ravine structure has a stable relationship to
``valleys'' in the objective function.

Ravines have a close relationship to curves defined by singularities of
the Hessian which may be worth exploring. We often observe ravines and
singularity curves running adjacent to each other. Moreover, we noticed
that the two different versions of the pullback directions, one being
related to the ravine, the other to the singularities, are identical. It
may also be interesting to explore the meaning of the two different
singularity types --- ``countercurrent'' and ``inflection''
singularities --- that appear at least in the two-dimensional test
functions.

It is a clear disadvantage of the criterion (and the pullback terms)
that it requires the computation of third-order derivatives. Except in
simple cases (as in the higher-dimensional example ``eigen''), the
computational effort may be forbiddingly large, even when exploiting
symmetries.

\subsection{Zigzag Method}

%
In order to traverse the ravine network, a zigzag method is suggested.
In all experiments, the method succeeds from the majority of starting
points. It traverses ravines by first approaching a ravine bottom
(down), and then alternatingly following the Newton vector until the
ravine is left (zig) and returning to the ravine bottom (zag). However,
there are cases where the convergence is only achieved by falling back
to default Newton steps, e.g. using the parallelity check when the
zigzagging doesn't lead to progress, or when no ravine can be reached
(e.g. Goldstein-Price). This makes the zigzag method relatively complex.
Moreover, entering a ravine requires a complex line-search method where
a coarse explicit line search, a step finding local minima, and a
refinement step using Golden Section search need to be combined. Without
the refinement, the method may miss ravines.

It is an advantage of the method compared to the original Newton method
that it prevents large jumps; instead, the first or an early step in the
down phase is damped. However, except in the Rosenbrock examples, the
following zigzag steps are often not damped, and if they are, this
sometimes indicates slow progress in tight ravines (e.g. Henon-Heiles,
junction1, junction2, Goldstein-Price).

In the final zigzag movements, the value of the divergence criterion
decreases fast. It is however, not clear to me why such low values are
not reached after earlier zag steps (as the ravine bottom is flat) ---
this may be related to the restricted number of steps permitted for the
Golden Section search or to the relatively high tolerance, or to the
shorter Newton steps close to the stationary points.

The zigzag method and the line-search method for the down phase have a
relatively high number of parameters (entry and escape threshold, number
of search steps, tolerances, search brackets, rejection thresholds,
parallelity check). However, the selection of the parameters appears to
be uncritical, as the same set of parameters could be used for all test
functions. It might be useful to randomly sample the value of the
divergence-based criterion to see whether the values outside of ravines
are in the same range for all test functions.

A number of modifications could improve the performance of the zigzag
method. It may be beneficial to check the steepness of the ravine walls
and avoid zigzagging along tight ravines. In the zig phase, the
criterion seems to be well-behaved, so the explicit search could
possibly be replaced with a more efficient optimization method. In cases
where the escape threshold is not reached in the zig phase, using the
nearest local minimum (instead of performing an undamped step) may
accelerate the approach to the stationary point.

\section{Conclusions}\label{sec_conclusions}

%
The line-search method suggested in this work is capable of approaching
saddle points, thus allowing its application for equality-constrained
optimization using Newton's method. It succeeds in many test functions,
but only exhibits advantages in some of them. The computational effort
compared to using a merit function is considerably higher. Nevertheless,
the idea of a traversing a ravine network may be attractive and justify
a search for alternatives to the specific, divergence-based criterion.

\section{Software}\label{sec_software}

%
The software used for the simulations in this work is available from
\url{https://www.ti.uni-bielefeld.de/html/downloads}. It comprises a
Python Jupyter notebook, a Python program generated from the notebook
(which can be done via a Makefile that is also provided), and a Tcl
script for systematically running a batch of simulations (for different
objective functions and different methods).

If you use the software, I would appreciate a citation of this report.
%
%

%

\appendix

\section{Proofs}

\subsection{Row Vector of a Matrix Product}\label{sec_rowvec_prod}

%
Given indices $n$: $1 \leq n$; $m$: $1 \leq m$; $N$: $1 \leq N$ and
matrices $\mb{A}$: arbitrary, $n \times m$; $\mb{B}$: arbitrary, $m
\times N$, we have
\begin{align}
&\phantom{{}={}} \mleft( \mb{A} \mb{B} \mright)_{i,*} \quad\mbox{for}\; i = 1 \ldots n \nonumber \\
& = \mleft( \mleft( \mb{A} \mb{B} \mright)_{i,j} \mright)^{1\times N}_{1,j}\\
& = \mleft( \mleft( \mb{A} \mright)_{i,*} \mleft( \mb{B} \mright)_{*,j}
\mright)^{1\times N}_{1,j}\\
& = \mleft( \mb{A} \mright)_{i,*} \mleft( \mleft( \mb{B} \mright)_{*,j}
\mright)^{m\times N}_{*,j}\\
&\label{eq_ABix}
 = \mleft( \mb{A} \mright)_{i,*} \mb{B}.
\end{align}

\subsection{Different Notation for Matrix Product}\label{sec_mat_prod}

%
Given $n$: $1 \leq n$; $m$: $1 \leq m$; $N$: $1 \leq N$ and matrices
$\mb{A}$: arbitrary, $n \times m$; $\mb{B}$: arbitrary, $m \times N$, we
have
\begin{align}
\label{eq_mat_prod}
\mb{A} \mb{B}
&=
\mleft( \mleft( \mb{A} \mright)_{i,*} \mleft( \mb{B} \mright)_{*,j}
\mright)^{n\times N}_{i,j}.
\end{align}

\subsection{Invariance of Divergence under Rotation}\label{sec_inv_div_rot}

%
We want to prove that the divergence is invariant under a rotation of
the vector field. For that we first need to describe the effect of a
rotation.\footnote{This part is taken from
\url{https://math.stackexchange.com/a/2490680}, thanks to user
\texttt{gim461}.} We start from the original vector field $\mb{v}\mleft(
\mb{x} \mright)$. Given a rotation matrix $\mb{R}$ (an orthogonal
matrix; to be strict we need to specify $\opnl{det} \mleft\{ \mb{R}
\mright\} = 1$ to exclude reflections, but this doesn't play any role in
the proof). Rotating a vector field actually requires two rotations: Not
only the direction vectors of the field need to be rotated, but also the
locations where these vectors originate. Therefore, the rotated vector
field $\mb{u}\mleft( \mb{x} \mright)$ is obtained from
\begin{align}
\mb{u}\mleft( \mb{x} \mright)
&=
\mb{R}^T \mb{v}\mleft( \mb{R} \mb{x} \mright)
\end{align}
where we look up the location vector in the original field at the
rotated coordinates
\begin{align}
\mb{z}
&=
\mb{R} \mb{x}
\end{align}
but then need to rotate the corresponding direction vector in the
opposite direction:
\begin{align}
\mb{u}
&=
\mb{R}^T \mb{v}.
\end{align}
Now we analyze the effect of the vector field rotation on the
divergence. We use the trace notation of the divergence
\begin{align}
\opnl{div} \mleft\{ \mb{v}\mleft( \mb{x} \mright) \mright\}
&=
\opnl{tr} \mleft\{ \frac{\partial}{\partial \mb{x}}\,\mb{v}\mleft(
\mb{x} \mright) \mright\}
\end{align}
and obtain
\begin{align}
&\phantom{{}={}} \opnl{div} \mleft\{ \mb{R}^T \mb{v}\mleft( \mb{R} \mb{x} \mright)
\mright\} \nonumber \\
& = \opnl{tr} \mleft\{ \frac{\partial}{\partial \mb{x}}\,\mleft( \mb{R}^T
\mb{v}\mleft( \mb{R} \mb{x} \mright) \mright) \mright\}\\
\shortintertext{with derivative \eqref{eq_ddx_Abx}}
& = \opnl{tr} \mleft\{ \mb{R}^T \mleft( \frac{\partial}{\partial
\mb{x}}\,\mb{v}\mleft( \mb{R} \mb{x} \mright) \mright) \mright\}\\
\shortintertext{with chain rule \eqref{eq_chainrule2}}
& = \opnl{tr} \mleft\{ \mb{R}^T \mleft( \frac{\partial}{\partial
\mb{z}}\,\mb{v}\mleft( \mb{z} \mright) \mright) \mleft(
\frac{\partial}{\partial \mb{x}}\,\mb{z} \mright) \mright\}\\
& = \opnl{tr} \mleft\{ \mb{R}^T \mleft( \frac{\partial}{\partial
\mb{z}}\,\mb{v}\mleft( \mb{z} \mright) \mright) \mleft(
\frac{\partial}{\partial \mb{x}}\,\mleft[ \mb{R} \mb{x} \mright]
\mright) \mright\}\\
\shortintertext{with derivative \eqref{eq_ddx_Abx}}
& = \opnl{tr} \mleft\{ \mb{R}^T \mleft( \frac{\partial}{\partial
\mb{z}}\,\mb{v}\mleft( \mb{z} \mright) \mright) \mb{R} \mright\}\\
\shortintertext{exploiting the cyclic property of the trace}
& = \opnl{tr} \mleft\{ \mb{R} \mb{R}^T \mleft( \frac{\partial}{\partial
\mb{z}}\,\mb{v}\mleft( \mb{z} \mright) \mright) \mright\}\\
\shortintertext{considering that $\mb{R}$ is orthogonal}
& = \opnl{tr} \mleft\{ \frac{\partial}{\partial \mb{z}}\,\mb{v}\mleft(
\mb{z} \mright) \mright\}\\
& = \opnl{div} \mleft\{ \mb{v}\mleft( \mb{z} \mright) \mright\}
\end{align}
which proves that the divergence is invariant under rotations.

\subsection{Equivalence of Two Pullback Directions}\label{sec_pullback_equiv}

%
Here we show that the two approaches to compute the pullback directions
coincide. We look at the vector derived from singularity curve. Element
$k$ of the un-normalized vector \eqref{eq_qt} is
\begin{align}
\label{eq_qtk}
\tilde{q}_{k}
&=
\sum\limits_{i = 1}^{n}\mleft( \mb{H}^{-1} \mright)_{i,*} \mleft(
\frac{\partial}{\partial x_{k}}\,\mb{H} \mright)_{*,i}.
\end{align}
We now try to transform the vector derived from the approximation of the
gradient of $\tau$ towards the expression above. Element $k$ of the
un-normalized vector \eqref{eq_pt} is
\begin{align}
&\phantom{{}={}} \tilde{p}_{k} \nonumber \\
& = \mleft( \sum\limits_{i = 1}^{n}\mleft[ \frac{\partial}{\partial
x_{i}}\,\mb{H} \mright] \mleft( \mb{H}^{-1} \mright)_{*,i} \mright)_{k}\\
& = \sum\limits_{i = 1}^{n}\mleft( \frac{\partial}{\partial x_{i}}\,\mb{H}
\mright)_{k,*} \mleft( \mb{H}^{-1} \mright)_{*,i}\\
&\label{eq_ptk}
 = \sum\limits_{i = 1}^{n}\mleft( \mb{H}^{-1} \mright)_{i,*} \mleft(
\frac{\partial}{\partial x_{i}}\,\mb{H} \mright)_{*,k}
\end{align}
where the last step exploited the symmetry of the Hessian and its
derivative.

We see that equations \eqref{eq_qtk} and \eqref{eq_ptk} only differ in
the second factor (a column vector). We analyze element $j$ for both
cases
\begin{align}
\mleft( \mleft( \frac{\partial}{\partial x_{k}}\,\mb{H} \mright)_{*,i}
\mright)_{j} & = \mleft( \frac{\partial}{\partial x_{k}}\,\mb{H} \mright)_{j,i} = \frac{\partial}{\partial x_{k}}\,\mleft( \mb{H} \mright)_{j,i} = \frac{\partial}{\partial x_{k}}\,\mleft( \frac{\partial}{\partial
x_{i}}\,\mleft[ \frac{\partial}{\partial x_{j}}\,f \mright] \mright)\\
\mleft( \mleft( \frac{\partial}{\partial x_{i}}\,\mb{H} \mright)_{*,k}
\mright)_{j} & = \mleft( \frac{\partial}{\partial x_{i}}\,\mb{H} \mright)_{j,k} = \frac{\partial}{\partial x_{i}}\,\mleft( \mb{H} \mright)_{j,k} = \frac{\partial}{\partial x_{i}}\,\mleft( \frac{\partial}{\partial
x_{k}}\,\mleft[ \frac{\partial}{\partial x_{j}}\,f \mright] \mright)
\end{align}
and see that the last terms coincide if the assumptions of Schwarz's
theorem are fulfilled.

\section{Derivatives}

%
For derivatives, this work uses the ``numerator layout''; see e.g.
\citet[p.130]{nn_Deisenroth20} and \citet[p.172]{nn_Aggarwal20}. In the
numerator layout, the derivative of a scalar with respect to a column
vector is a row vector \citep[see e.g.][p.127]{nn_Deisenroth20}.

\subsection{Chain Rules}

\subsubsection{Chain Rule for Vectors with Scalar Result}

%
This first version of the chain rule concerns the application of a
scalar-valued function $g$ of a vector to a vector-valued function
$\mb{f}$ of a vector \citep[see e.g.][p.129]{nn_Deisenroth20}.

Dimensions are $n$: $1 \leq n$; $m$: $1 \leq m$; variables and functions
are $\mb{x}$: arbitrary, $n \times 1$; $\mb{z}$: arbitrary, $m \times
1$; $\mb{f}$: function, arbitrary, $m \times 1$; $g$: function, scalar,
$1 \times 1$; $y$: scalar, $1 \times 1$:
\begin{align}
\mb{z}
&=
\mb{f}\mleft( \mb{x} \mright)\\
y
&=
g\mleft( \mb{z} \mright)\\
\label{eq_chainrule1}
\frac{\partial}{\partial \mb{x}}\,g
&=
\mleft( \frac{\partial}{\partial \mb{z}}\,g \mright) \mleft(
\frac{\partial}{\partial \mb{x}}\,\mb{f} \mright).
\end{align}
Note that the first factor (a gradient) is a row vector, and the second
factor is a Jacobian matrix; their product is a row vector.

\subsubsection{Chain Rule for Vectors with Vector Result}

%
This second (and more general) version of the chain rule concerns the
application of a vector-valued function $\mb{g}$ of a vector to a
vector-valued function $\mb{f}$ of a vector \citep[see
e.g.][p.175]{nn_Aggarwal20}.

Dimensions are $n$: $1 \leq n$; $m$: $1 \leq m$; $N$: $1 \leq N$;
variables and functions are $\mb{x}$: arbitrary, $N \times 1$; $\mb{z}$:
arbitrary, $m \times 1$; $\mb{y}$: arbitrary, $n \times 1$; $\mb{f}$:
function, arbitrary, $m \times 1$; $\mb{g}$: function, arbitrary, $n
\times 1$:
\begin{align}
\mb{z}
&=
\mb{f}\mleft( \mb{x} \mright)\\
\mb{y}
&=
\mb{g}\mleft( \mb{z} \mright)\\
\label{eq_chainrule2}
\frac{\partial}{\partial \mb{x}}\,\mb{g}
&=
\mleft( \frac{\partial}{\partial \mb{z}}\,\mb{g} \mright) \mleft(
\frac{\partial}{\partial \mb{x}}\,\mb{f} \mright).
\end{align}

\subsection{Derivatives of Matrix-Vector Products}\label{sec_deriv_matvec}

\subsubsection{Constant Matrix, Variable Vector}

%
Given dimensions $n$: $1 \leq n$ and matrices $\mb{x}$: arbitrary, $n
\times 1$; $\mb{A}$: constant, square, $n \times n$; $\mb{b}\mleft(
\mb{x} \mright)$: function of $\mb{x}$, arbitrary, $n \times 1$, we have
\begin{align}
&\phantom{{}={}} \frac{\partial}{\partial \mb{x}}\,\mleft( \mb{A} \mb{b}\mleft( \mb{x}
\mright) \mright) \nonumber \\
& = \mleft( \frac{\partial}{\partial x_{j}}\,\mleft( \mb{A} \mb{b}\mleft(
\mb{x} \mright) \mright)_{i} \mright)^{n\times n}_{i,j}\\
\shortintertext{omitting argument $\mb{x}$}
& = \mleft( \frac{\partial}{\partial x_{j}}\,\sum\limits_{k = 1}^{n}A_{i,k}
b_{k} \mright)^{n\times n}_{i,j}\\
& = \mleft( \sum\limits_{k = 1}^{n}A_{i,k} \mleft[ \frac{\partial}{\partial
x_{j}}\,b_{k} \mright] \mright)^{n\times n}_{i,j}\\
& = \mleft( \sum\limits_{k = 1}^{n}A_{i,k} \mleft( \frac{\partial}{\partial
\mb{x}}\,\mb{b} \mright)_{k,j} \mright)^{n\times n}_{i,j}\\
& = \mleft( \mleft( \mb{A} \mleft[ \frac{\partial}{\partial \mb{x}}\,\mb{b}
\mright] \mright)_{i,j} \mright)^{n\times n}_{i,j}\\
&\label{eq_ddx_Abx}
 = \mb{A} \mleft( \frac{\partial}{\partial \mb{x}}\,\mb{b} \mright).
\end{align}

\subsubsection{Variable Matrix, Constant Vector}

%
Version for transposed matrix: Given dimensions and indices $n$: $1 \leq
n$; $m$: $1 \leq m$; $N$: $1 \leq N$; $i$: $1 \leq i \leq n$ and
matrices $\mb{x}$: arbitrary, $N \times 1$; $\mb{A}\mleft( \mb{x}
\mright)$: function of $\mb{x}$, arbitrary, $m \times n$;
$\mb{a}_{i}\mleft( \mb{x} \mright)$: function of $\mb{x}$, arbitrary, $m
\times 1$, column vector of $\mb{A}$; $\mb{b}$: constant, arbitrary, $m
\times 1$, we have
\begin{align}
&\phantom{{}={}} \mleft( \frac{\partial}{\partial \mb{x}}\,\mleft[ \mleft\{ \mb{A}\mleft(
\mb{x} \mright) \mright\}^T \mb{b} \mright] \mright)_{i,*} \nonumber \\
& = \frac{\partial}{\partial \mb{x}}\,\mleft( \mleft[ \mb{A}\mleft( \mb{x}
\mright) \mright]^T \mb{b} \mright)_{i}\\
& = \frac{\partial}{\partial \mb{x}}\,\mleft( \mleft[ \mb{a}_{i}\mleft(
\mb{x} \mright) \mright]^T \mb{b} \mright)\\
&\label{eq_ddx_AxTb}
 = \mb{b}^T \mleft( \frac{\partial}{\partial \mb{x}}\,\mb{a}_{i}\mleft(
\mb{x} \mright) \mright).
\end{align}

\subsubsection{Variable Matrix, Variable Vector}

%
Given $n$: $1 \leq n$ and matrices $\mb{x}$: arbitrary, $n \times 1$;
$\mb{A}\mleft( \mb{x} \mright)$: function of $\mb{x}$, square, $n \times
n$; $\mb{a}_{i}\mleft( \mb{x} \mright)$: function of $\mb{x}$,
arbitrary, $n \times 1$, column vector of $\mb{A}$; $\mb{b}\mleft(
\mb{x} \mright)$: function of $\mb{x}$, arbitrary, $n \times 1$, we have
\begin{align}
&\phantom{{}={}} \frac{\partial}{\partial \mb{x}}\,\mleft( \mb{A}\mleft( \mb{x} \mright)
\mb{b}\mleft( \mb{x} \mright) \mright) \nonumber \\
& = \mleft( \frac{\partial}{\partial x_{j}}\,\mleft( \mb{A}\mleft( \mb{x}
\mright) \mb{b}\mleft( \mb{x} \mright) \mright)_{i} \mright)^{n\times
n}_{i,j}\\
\shortintertext{omitting argument $\mb{x}$}
& = \mleft( \frac{\partial}{\partial x_{j}}\,\sum\limits_{k = 1}^{n}A_{i,k}
b_{k} \mright)^{n\times n}_{i,j}\\
& = \mleft( \sum\limits_{k = 1}^{n}\mleft[ \frac{\partial}{\partial
x_{j}}\,A_{i,k} \mright] b_{k} + \sum\limits_{k = 1}^{n}A_{i,k} \mleft[
\frac{\partial}{\partial x_{j}}\,b_{k} \mright] \mright)^{n\times
n}_{i,j}\\
& = \mleft( \sum\limits_{k = 1}^{n}\mleft( \frac{\partial}{\partial
x_{j}}\,\mb{A} \mright)_{i,k} b_{k} + \sum\limits_{k = 1}^{n}A_{i,k}
\mleft( \frac{\partial}{\partial \mb{x}}\,\mb{b} \mright)_{k,j}
\mright)^{n\times n}_{i,j}\\
& = \mleft( \mleft( \mleft[ \frac{\partial}{\partial x_{j}}\,\mb{A} \mright]
\mb{b} \mright)_{i} + \mleft( \mb{A} \mleft[ \frac{\partial}{\partial
\mb{x}}\,\mb{b} \mright] \mright)_{i,j} \mright)^{n\times n}_{i,j}\\
&\label{eq_ddx_Axbx}
 = \mleft( \mleft( \frac{\partial}{\partial x_{j}}\,\mb{A} \mright)_{i,*}
\mb{b} \mright)^{n\times n}_{i,j} + \mb{A} \mleft(
\frac{\partial}{\partial \mb{x}}\,\mb{b} \mright).
\end{align}

\subsection{Derivative of Inverse Matrix}\label{sec_inv_deriv}

%
\citep[][p.364]{nn_Abadir05} \footnote{I follow
\url{https://atmos.washington.edu/~dennis/MatrixCalculus.pdf} since the
notation is similar to the one used here.}

Given $n$: $1 \leq n$ and $\mb{A}$: function of $\alpha$, square, $n
\times n$; $\alpha$: scalar, $1 \times 1$.
\begin{align}
\mb{A}^{-1} \mb{A}
&=
\mb{I}_{n}\\
\shortintertext{differentiate}
\mb{A}^{-1} \mleft( \frac{\partial}{\partial \alpha}\,\mb{A} \mright) +
\mleft( \frac{\partial}{\partial \alpha}\,\mb{A}^{-1} \mright) \mb{A}
&=
\mb{0}_{n,n}\\
\shortintertext{rearrange terms}
\label{eq_inv_deriv}
\frac{\partial}{\partial \alpha}\,\mb{A}^{-1}
&=
- \mb{A}^{-1} \mleft( \frac{\partial}{\partial \alpha}\,\mb{A} \mright)
\mb{A}^{-1}.
\end{align}

\section{Test Functions}\label{sec_testfct}

\subsection{Rosenbrock Function}\label{sec_rosenbrock}

%
The Rosenbrock function --- taken second-hand from
\citet[p.~14]{nn_Alt11} and the Wikipedia
page\footnote{\url{https://en.wikipedia.org/wiki/Rosenbrock_function}}
--- is essentially a distorted quadratic function. We can derive the
function from the purely quadratic function
\begin{align}
f\mleft( x',y' \mright) & = \begin{pmatrix}x' & y'\end{pmatrix} \begin{pmatrix}1 & 0\\0 &
b\end{pmatrix} \begin{pmatrix}x'\\y'\end{pmatrix} = x'^2 + b y'^2.
\end{align}
If parameter $b$ is non-negative, the quadratic function has a minimum.
From the shape matrix we see that the axes of the ellipsoid contours
coincide with the axes of the coordinate system. When using the default
parameter $b = 100$, the ellipsoids are elongated in the $x'$ direction
(usually the horizontal direction in diagrams) and narrow in the $y'$
direction (vertical).

Now the underlying coordinate space is distorted using $x' = x - a$
(where the default parameter is $a = 1$) and $y' = y - c x^2 $. The
first equation shifts the center of the quadratic function to the right
(i.e. in the positive $x$ direction), the second equation distorts the
elongated valley by bending it upwards (in positive $y$ direction) on
both sides of the $y$ axis, producing a banana shape.\footnote{I suppose
that if the distortion is applied simultaneously to both coordinates as
in this derivation, there could be transformations where the topology of
the space is not preserved. This would result in duplicated regions of
the original function, or in the omission of parts of the space.
However, it should always be safe to apply transformations sequentially
to the coordinate variables.} We added the parameter $c$ (default $c =
1$) instead of the constant $1$ used in the original function; this
parameter affects the amount of bending.

By inserting these transformations and defining $g\mleft( x,y \mright) =
f\mleft( x',y' \mright)$ we obtain the Rosenbrock function
\begin{align}
\label{eq_rosenbrock}
g\mleft( x,y \mright)
&=
\mleft( x - a \mright)^2 + b \mleft( y - c x^2 \mright)^2.
\end{align}
The minimum of the Rosenbrock function (zero as in the undistorted
function) is found at $\left(x, y\right) = \left(a, c a^2 \right)$; in
this case, both arguments of the squared terms in \eqref{eq_rosenbrock}
are zero.

The Rosenbrock function is also well suited for our purpose since the
original version which has a minimum can be turned into a version which
has a saddle point in the same position. This is achieved by choosing a
negative value for $b$ (e.g. $b = - 100$). We are testing four different
versions of the Rosenbrock function: minimum and narrow valley ($b =
100$), saddle and narrow valley ($b = -100$), minimum and wider valley
($b = 10$), and saddle and wider valley ($b = -10$). In this report, we
focus on the versions with the wider valleys.

Newton's method requires that the Hessian $\mb{H}$ is non-singular. It
is interesting to observe that in the Rosenbrock function there is a
curve where $\mb{H}$ is actually singular, and this curve runs close to
the sloped valley bottom of the banana shape. We can obtain the equation
for this curve from the determinant of the Hessian (computed using
Python's \texttt{sympy} package):
\begin{align}
\label{eq_rosenbrock_detH}
\opnl{det} \mleft\{ \mb{H}\mleft( x,y \mright) \mright\}
&=
8 b^2 c^2 x^2 - 8 b^2 c y + 4 b
\end{align}
from which we obtain an equation for the curve $\opnl{det} \mleft\{
\mb{H}\mleft( x,y \mright) \mright\} = 0$:
\begin{align}
y
&=
c x^2 + {\,\frac{1}{2 b c }\,}
\end{align}
The first term is the curve of the valley bottom. The second term is the
offset of the singularity curve from the valley bottom. For the default
parameters ($b = 100$, $c = 1$), this term is $1/200 = 0.005$, so the
singularity curve runs very close to the bottom of the valley. This is
somewhat disconcerting when the Rosenbrock function is used as a test
function for Newton methods, in particular with line search where the
sequence of intermediate solutions runs close to the valley bottom and
therefore adjacent to the singularity curve. Bending a straight valley
(or just its walls) will probably always introduce such a singularity.
If the elongation in horizontal direction $b$ increases, the singularity
curve moves closer to the valley bottom. The singularity curve only
disappears if there is no vertical distortion (i.e. for $c = 0$) --- in
this case we get $b = 0$ from \eqref{eq_rosenbrock_detH}, so for any $b
\ne 0$, no singularity exists.

The singularity curve is also somewhat unusual. I assume that typically
a singularity manifold separates the regions corresponding to different
stationary points, or regions where Newton's method converges or
diverges, respectively. However, the Rosenbrock function only has a
single stationary point, and Newton's method without line search
approaches the stationary point from both sides of the singularity
curve.

\subsection{Rosenbrock Ditch Function}\label{sec_ditch}

%
The Rosenbrock ``ditch'' function is a version of the Rosenbrock
function introduced in this work. Instead of starting from a purely
quadratic function, the function is ditch-shaped in $y$-direction, e.g.
it is quadratic at the bottom, but has an inflection point further up
and then approaches a limit:
\begin{align}
f\mleft( x',y' \mright) & = x'^2 + b {\,\frac{y'^2}{1 + d y'^2 }\,}.
\end{align}
There is a fourth parameter $d$ in addition to the three parameters
present in the Rosenbrock function.

As for the Rosenbrock function, we simultaneously replace $x' = x - a$
and $y' = y - c x^2 $, thus bending the function into a banana shape:
\begin{align}
f\mleft( x',y' \mright) & = \mleft( x - a \mright)^2 + b {\,\frac{\mleft( y - c x^2 \mright)^2}{1 +
d \mleft( y - c x^2 \mright)^2 }\,}.
\end{align}
The minimum (or saddle point) remains the same as in the Rosenbrock
function; however, it is not clear whether there are other stationary
points.

\subsection{Himmelblau's Function}\label{sec_himmelblau}

%
Himmelblau's function --- taken second-hand from \citet[p.~15]{nn_Alt11}
and the Wikipedia
page\footnote{\url{https://en.wikipedia.org/wiki/Himmelblau's_function}}
--- is defined by
\begin{align}
f\mleft( x,y \mright)
&=
\mleft( x^2 + y - 11 \mright)^2 + \mleft( x + y^2 - 7 \mright)^2.
\end{align}
Himmelblau's function has a local maximum and four local minima (with
values of $0.0$), as well as four saddle points.

\subsection{Henon-Heiles Function}\label{sec_henonheiles}

%
I found a description of the H\'enon-Heiles system, a potential function
from celestial mechanics, second-hand in \citet[]{nn_Enns11} and
consulted the Wikipedia
page.\footnote{\url{https://en.wikipedia.org/wiki/H\%C3\%A9non\%E2\%80\%93Heiles_system}}
Here the system is used as an objective function. It is defined by
\begin{align}
f\mleft( x,y \mright)
&=
\frac{1}{2}\, \mleft( x^2 + y^2 \mright) + a \mleft( x^2 y -
\frac{1}{3}\, y^{3} \mright).
\end{align}
The function comprises a minimum at $(0,0)$ with value $0.0$, as well as
three saddle points.

\subsection{Junction Functions}\label{sec_junction}

%
The two ``junction'' functions introduced in this work exhibit a
crossing of two ``ditches'' (see section~\ref{sec_ditch}). In one
version, both ditches are bent, in the other version only one. These
functions have fixed parameters, since using parameter symbols in the
Python symbolic toolbox caused very long computation times.

The derivation starts from a function with two straight ditches running
along the axes of the coordinate system, crossing at $(0,0)$,
superimposed with a quadratic function:
\begin{align}
j(x', y') =
\frac{1000 x'^2 y'^2}{(10 + x'^2) (5 + y'^2)}
+ x'^2 + y'^2
\end{align}
In the first step, we replace $y'$ by $y - 0.05 x'^2$. The resulting
function over $(x', y)$ is called ``junction2'' (one bent ditch, one
straight ditch).

In the second step, we replace $x'$ by $x - 0.02 y^2$. This gives a
function over $(x, y)$ called ``junction1'' (two bent ditches).

Both junction functions have a minimum at $(0,0)$ with value $0$, but
may have additional stationary points elsewhere.

\subsection{Goldstein-Price and Beale Function}\label{sec_gldstnprc_beale}

%
Both the Goldstein-Price and the Beale function where taken second-hand
from the Wikipedia collection of test
functions.\footnote{\url{https://en.wikipedia.org/wiki/Test_functions_for_optimization}}

The Goldstein-Price function, which can be interpreted as a crossing of
two valleys as in the junction functions (section~\ref{sec_junction}),
is defined by
\begin{align}
g(x, y) = &
\left[
      1 + (x + y + 1)^2 (19 - 14 x + 3 x^2 - 14 y + 6 x y + 3 y^2)
\right]\cdot\\
&
\left[
      30 + (2 x - 3 y)^2 (18 - 32 x + 12 x^2 + 48 y - 36 x y + 27 y^2)
\right];
\end{align}
it has a minimum at $g(0,-1)=3$.

The Beale function is defined by
\begin{align}
b(x, y) = 
(1.5 - x + x y)^2 +
(2.25 - x + x y^2)^2 +
(2.625 - x + x y^3)^2;
\end{align}
it has a minimum at $b(3,0.5)=0$.

\section{Computational Effort for the Third-Order Term}\label{sec_effort_third_order}

%
The divergence criterion \eqref{eq_tau} requires the computation of all
third-order derivatives of the Hessian. In a trivial implementation,
this would require the computation of $n^3$ values (if $n$ is the
problem dimension). However, if the assumptions of Schwarz's theorem are
fulfilled, the order of the 3 partial derivatives is arbitrary. We can
therefore reduce the number of computations. From enumerating the values
up to $n=5$, I guess the required number of computations is expressed by
the binomial coefficient
\begin{align}
\binom{n+2}{n-1}.
\end{align}
For large values of $n$, this would reduce the computational effort by a
factor of up to $6$ (obtained from a numerical experiment). Still, the
effort is prohibitively large for large $n$. It would only be manageable
if many third-order derivatives are zero (sparse Hessian, quadratic
terms).

For the implementation used in the experiments with 2D test functions,
we reduced the effort for the computation of the Hessian by $25\%$, and
for the third-order terms by $50\%$.


\begin{thebibliography}{14}
\expandafter\ifx\csname natexlab\endcsname\relax\def\natexlab#1{#1}\fi

\bibitem[Abadir and Magnus(2005)]{nn_Abadir05}
K.~M. Abadir and J.~R. Magnus.
\newblock {\em Matrix Algebra}.
\newblock Cambridge University Press, 2005.

\bibitem[Aggarwal(2020)]{nn_Aggarwal20}
C.~C. Aggarwal.
\newblock {\em Linear Algebra and Optimization for Machine Learning}.
\newblock Springer, 2020.

\bibitem[Alt(2011)]{nn_Alt11}
W.~Alt.
\newblock {\em Nichtlineare Optimierung. Eine Einf\"uhrung in Theorie,
  Verfahren und Anwendungen}.
\newblock Vieweg + Teubner, 2nd edition, 2011.

\bibitem[Andrei(2022)]{nn_Andrei22}
N.~Andrei.
\newblock {\em Modern Numerical Nonlinear Optimization}, volume 195 of {\em
  Springer Optimization and Its Applications}.
\newblock Springer, 1st edition, 2022.

\bibitem[Chong and \.{Z}ak(2008)]{nn_Chong08}
E.~K.~P. Chong and S.~H. \.{Z}ak.
\newblock {\em An Introduction to Optimization}.
\newblock Wiley, 3rd edition, 2008.

\bibitem[Corriou(2021)]{nn_Corriou21}
J.-P. Corriou.
\newblock {\em Numerical Methods and Optimization. Theory and Practice for
  Engineers}, volume 187 of {\em Springer Optimization and Its Applications}.
\newblock Springer, 2021.

\bibitem[Deisenroth et~al.(2020)Deisenroth, Faisal, and Ong]{nn_Deisenroth20}
M.~P. Deisenroth, A.~A. Faisal, and C.~S. Ong.
\newblock {\em Mathematics for Machine Learning}.
\newblock Cambridge University Press, 2020.

\bibitem[Enns(2011)]{nn_Enns11}
R.~H. Enns.
\newblock {\em It's a Nonlinear World}.
\newblock Springer Undergraduate Texts in Mathematics and Technology. Springer,
  2011.

\bibitem[Geiger and Kanzow(2002)]{nn_Geiger02}
C.~Geiger and C.~Kanzow.
\newblock {\em Theorie und Numerik restringierter Optimierungsaufgaben}.
\newblock Springer, 2002.

\bibitem[Jarre and Stoer(2019)]{nn_Jarre19}
F.~Jarre and J.~Stoer.
\newblock {\em Optimierung. Eine Einf\"uhrung in mathematische Theorie und
  Methoden}.
\newblock Springer Spektrum, 2nd edition, 2019.

\bibitem[M\"oller(2022)]{own_Moeller22}
R.~M\"oller.
\newblock Derivation of learning rules for coupled principal component analysis
  in a {L}agrange-{N}ewton framework.
\newblock {\em arXiv:2204.13460}, 2022.

\bibitem[M\"oller(2024)]{own_Moeller24}
R.~M\"oller.
\newblock A {L}agrange-{N}ewton approach to smoothing-and-mapping.
\newblock {\em arXiv:2401.13302}, 2024.

\bibitem[M\"oller and K\"onies(2004)]{own_Moeller04a}
R.~M\"oller and A.~K\"onies.
\newblock Coupled principal component analysis.
\newblock {\em IEEE Transactions on Neural Networks}, 15\penalty0 (1):\penalty0
  214--222, 2004.

\bibitem[Nocedal and Wright(2006)]{nn_Nocedal06}
J.~Nocedal and S.~J. Wright.
\newblock {\em Numerical Optimization}.
\newblock Springer Series in Operation Research and Financial Engineering.
  Springer, 2nd edition, 2006.

\end{thebibliography}
\end{document}